\newtheorem{theorem}{Theorem}[section]
\newtheorem{problem}[theorem]{Problem}
\newtheorem{lemma}[theorem]{Lemma}
\newtheorem{corollary}[theorem]{Corollary}
\newtheorem{proposition}[theorem]{Proposition}
\theoremstyle{definition}
\newtheorem{definition}[theorem]{Definition}
\theoremstyle{remark}
\newtheorem{remark}[theorem]{Remark}
\renewcommand{\vec}[1]
{\mathbf{#1}}
\newcommand{\n}[1]
{\overline{#1}}
\newcommand{\Weights}
{\mathcal{D}_{0, n}}
\newcommand{\Weightsg}
{\mathcal{D}_{g_+, n}}
\newcommand{\Weightsgen}{\mathcal{D}_{g, n}}
\newcommand{\D}{\mathcal{D}}
\newcommand{\Famg}{\mathcal{C}_{g, \vec w}}
\newcommand{\Threshn}
{\tau_n}
\newcommand{\Thresh}{\tau}
\newcommand{\Wf}{\mathcal{W}_f}
\newcommand{\oPhi}{\overline{\Phi}}
\newcommand{\olPhi}{\overline{\Phi}_0}
\newcommand{\nrange}{{[n]}}
\newcommand{\R}{\mathbb{R}}
\newcommand{\Q}{\mathbb{Q}}
\newcommand{\N}{\mathbb{N}}
\newcommand{\GoldSD}[1]{\textproc{GoldSD}$(#1)$ }
\newcommand{\Thresf}{\textproc{Thres}$(f)$ }
\newcommand{\Goldf}{\textproc{Gold}$(f)$ }
\newcommand{\SGoldf}{\textproc{SGold}$(f)$ }
\newcommand{\IsSmalla}{\textproc{IsSmall}$(f, i, \operatorname{val})$ }
\newcommand{\SCounta}{\textproc{SCount}$(f, i, \operatorname{val})$ }
\newcommand{\GoldSDa}{\textproc{GoldSD}$(f, a)$ }
\newcommand{\val}{\operatorname{val}}
\newcommand{\real}{(\vec{w},\theta)}
\newcommand{\areal}{(\vec{w}, \theta')}
\newcommand{\vu}{\vec{u}}
\newcommand{\w}{\vec{w}}
\newcommand{\x}{\vec{x}}
\newcommand{\y}{\vec{y}}
\newcommand{\ny}{\overline{\vec{y}}}
\newcommand{\xM}{\vec{x}_M}
\newcommand{\nx}{\overline{\vec{x}}}
\newcommand{\nxM}{\overline{\vec{x}}_M}
\newcommand{\sd}{{sd}}
\newcommand{\nequiv}{\sim_N}
\newcommand{\LTF}{\operatorname{LTF}}
\newcommand{\Gold}{\operatorname{Gold_0}}
\newcommand{\SD}{\operatorname{SD}}
\newcommand{\PL}{\operatorname{PL}}
\newcommand{\Pos}{\operatorname{P}}
\newcommand{\SGold}{\operatorname{Gold_{g_+}}}
\newcommand{\calS}{\mathcal{S}}
\newcommand{\calT}{\mathcal{T}}
\newcommand{\calM}{\mathcal{M}}
\newcommand{\fp}{f^{+}}
\newcommand{\fps}{\fp_{-}}
\newcommand{\fxs}{f_{x_{i}=s}}
\title{Enumerating Hassett's wall and chamber decomposition of the moduli space of weighted stable curves}
\author{Kenneth Ascher, Connor Dub\'e, Daniel Gershenson, and Elaine Hou}
\date{}
\begin{document}

\maketitle

\begin{abstract}
Hassett constructed a class of modular compactifications of $\calM_{g,n}$ by adding weights to the marked points. This leads to a natural wall and chamber decomposition of the domain of admissible weights $\Weightsgen$, where the moduli space and universal family remain constant inside a chamber, and may change upon crossing a wall. The goal of this paper is to count the number of chambers in this decomposition. We relate these chambers to a class of boolean functions known as linear threshold functions (LTFs), and discover a subclass of LTFs which are in bijection with the chambers. Using this relation, we prove an asymptotic formula for the number of chambers, and compute the exact number of chambers for $n \leq 9$. In addition, we provide an algorithm for the enumeration of chambers of $\Weightsgen$ and prove results in computational complexity.
\end{abstract}

\section{Introduction}
\label{sec.intro}
Hassett \cite{Hassett2002} constructed a class of modular compactifications $\overline{\calM}_{g, \vec{w}}$ of the moduli space $\calM_{g, n}$ of $n$-marked curves of genus $g$ and its Deligne-Mumford compactification $\overline{\calM}_{g, n}$ by introducing a weight vector $\vec w= (w_1, \cdots, w_n) \in \mathbb{Q}^n$ with $0 < w_i \leq 1$. Hassett showed that the domain of such admissible weights admits a wall and chamber decomposition in which the moduli space and its universal family remain constant within a chamber, but may change upon crossing a wall. This raises a natural problem asked by Hassett.

\begin{problem}\cite[Problem 5.2]{Hassett2002} Find formulas for the numbers of chambers in the domain of admissible weights. \end{problem}

The goal of this paper is to address this problem, by relating chambers in Hassett's decomposition of $\Weightsgen$ (see Definitions \ref{def:dgn} and \ref{def:decomp}) to a class of boolean functions studied in computer science, known as \emph{linear threshold functions} (see Definition \ref{def:ltf}). We identify two subclasses of linear threshold functions -- which we refer to as ``Semi-Goldilocks" and ``Goldilocks" (see Definition \ref{def:sgoldilocks}) -- thus providing a combinatorial framework for the enumeration of chambers of $\Weightsgen$.

\begin{theorem}[see Corollary \ref{cor:bijection}]
Chambers in $\Weightsg$ (i.e. $\Weightsgen$ for all $g>0$) are in bijection with Semi-Goldilocks linear threshold functions. Chambers in $\Weights$ are in bijection with Goldilocks linear threshold functions. 
\end{theorem}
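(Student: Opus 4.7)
The plan is to attach to each weight vector $\vec w \in \Weightsgen$ the boolean function $f_{\vec w} : \{0,1\}^n \to \{0,1\}$ defined by $f_{\vec w}(x) = \sgn\bigl(\sum_i w_i x_i - 1\bigr)$, which is manifestly an LTF with representation $(\vec w, 1)$, and to argue that this map descends to the desired bijection.

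First I would show that $f_{\vec w}$ is a complete chamber invariant. Hassett's wall arrangement on $\Weightsgen$ consists of the hyperplanes $\sum_{i \in S} w_i = 1$ as $S$ ranges over the subsets of $[n]$ satisfying the genus-dependent size constraints. Each chamber, being the intersection of the admissible domain with a choice of open half-space for every such wall, is a convex polytope and hence connected. Thus two admissible weight vectors lie in a common chamber if and only if the sign of $\sum_{i \in S} w_i - 1$ agrees on every relevant $S$, and this sign pattern is exactly the data of $f_{\vec w}$ evaluated on the indicator vectors $\mathbf 1_S$. Consequently, the assignment $\vec w \mapsto f_{\vec w}$ descends to an injection $\overline\Phi$ from chambers into $\LTF$.

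Next I would identify the image of $\overline\Phi$. For $g \geq 1$ the only admissibility condition is $0 < w_i \leq 1$, so an LTF lies in the image iff it admits a representation with positive weights bounded above by the threshold; the Semi-Goldilocks condition of Definition \ref{def:sgoldilocks} is precisely this, giving the bijection in the $\Weightsg$ case. For $g = 0$ there is the additional Hassett constraint $\sum w_i > 2$, and the Goldilocks condition encodes the existence of a representation meeting this further bound, giving the bijection in the $\Weights$ case. In both cases necessity is immediate, since any admissible $\vec w$ is itself a witnessing representation once we normalize the threshold to $1$.

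The main obstacle is sufficiency: given a (Semi-)Goldilocks LTF $f$, one must produce admissible weights realizing the same sign pattern on every subset-sum. A single LTF admits a whole polytope of representations, and the task is to select a representative inside this polytope whose normalization lands in $(0,1]^n$ and, in genus zero, whose total mass exceeds $2$. This is exactly what the (Semi-)Goldilocks defining clauses are designed to permit, but carrying out the rescaling cleanly will likely require care on the boundary (avoiding $w_i = 0$ or $w_i > 1$) together with a small perturbation argument within the representation polytope to ensure that, in the genus-zero case, the total-mass inequality is strict while no subset-sum is accidentally pushed across the wall $\sum_{i \in S} w_i = 1$.
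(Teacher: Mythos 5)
Your overall architecture matches the paper: the map $\vec w \mapsto f_{\vec w}(\x) = \sgn(\vec w \cdot \x - 1)$, the observation that the sign pattern on indicator vectors $\mathbf 1_S$ is a complete chamber invariant (so the map descends to an injection on chambers), and the identification of the image with Semi-Goldilocks and Goldilocks LTFs. You also correctly note that the ``necessity'' direction is immediate.

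The genuine gap is in sufficiency, and you only half-acknowledge it. Definition \ref{def:sgoldilocks} requires that there \emph{exist} a positive realization $(\vec w, \theta)$, and \emph{separately} a small realization $(\vec w', \theta)$, and for Goldilocks \emph{separately} an ample realization $(\vec w'', \theta)$; nowhere does it assert a single realization meeting all criteria simultaneously. To land in $\Weightsgen$ after normalizing $\theta$ to $1$, you need one realization that is positive, small, and (in genus zero) ample at once, and this is exactly where the paper does its real work (Lemma \ref{Goldrep}). You wave at a ``perturbation argument within the representation polytope,'' but naive convex averaging of a positive realization and a small realization does not in general give a realization with either property, so a concrete mechanism is needed. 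The paper's mechanism uses two facts you have not identified: (a) smallness is \emph{intrinsic} to the function, not the realization --- since a small LTF has $f(\hat e_i)=0$ for all $i$, every realization automatically satisfies $w_i \le \theta$ (Proposition \ref{smallreal}), so any positive realization is already small; and (b) given any realization of an ample LTF one can lower $\theta$, holding $\vec w$ fixed, to an ample realization (Proposition \ref{amplification}, the ``amplification'' step), and this preserves positivity trivially and smallness by (a). You also do not treat the degenerate case $\theta \le 0$: if $\theta = 0$ then smallness forces $\vec w = \vec 0$, contradicting positivity, so $\theta > 0$ and the normalization is legitimate. Filling these in would make your argument complete and essentially coincide with the paper's.
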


Ideally one would want to use this bijection to enumerate the chambers and find explicit formulas. Unfortunately, the problem of determining whether a given boolean function is a linear threshold function is co-NP-complete (see Definition \ref{conpcomplete}) \cite{hegedusconp}. In addition to developing testable criteria and algorithm for enumerating these chambers (see Appendix \ref{sec.alg}), we prove that testing whether a given boolean function is Semi-Goldilocks or Goldilocks, i.e. corresponds to a chamber in $\Weightsgen$, is equally hard (see Theorem \ref{thm.intro.hard}), and thus we do not expect an elementary formula for the number of such chambers. 

\smallskip

Instead, we determine an asymptotic formula for the number of such chambers. Here $\SGold(n)$ and $\Gold(n)$ (see Definition \ref{def:sgoldilocks}) refer to the number of chambers in the decompositions of $\Weightsg$ and $\Weights$, respectively (see Definition \ref{def:dgn}).

\begin{theorem}[see Corollaries \ref{ps.simp.asymp.cor} and \ref{!gold.asymp}]
The number of chambers for $g > 0$ and $n$ marked points satisfies the following asymptotic formula:
$$ \SGold(n) \sim 2^{n^2 - n \log_2 n + O(n)}.$$ 

\noindent The number of chambers for $g = 0$ and $n$ marked points satisfies the following asymptotic formula:
$$ \Gold(n) \sim \frac{\SGold(n)}{2}.$$
\end{theorem}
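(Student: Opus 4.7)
The plan is to reduce both asymptotics to the classical enumeration of linear threshold functions. By a theorem of Zuev, the total number of LTFs on $n$ binary variables satisfies $\LTF(n) = 2^{n^2 - n \log_2 n + O(n)}$. By the bijection in the theorem above, Semi-Goldilocks and Goldilocks LTFs are subclasses of LTFs, so the upper bounds $\SGold(n), \Gold(n) \le \LTF(n)$ are immediate. The content is therefore (i) a matching lower bound $\SGold(n) \ge 2^{n^2 - n \log_2 n - O(n)}$, and (ii) a two-to-one comparison $\Gold(n) \sim \SGold(n)/2$.

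For (i), the plan is to show that an arbitrary LTF can be put into Semi-Goldilocks form at the cost of at most a $2^{O(n)}$ factor. The Semi-Goldilocks condition is essentially a positivity and scale normalization on a representing pair $(\vec w, \theta)$, inherited from the Hassett admissibility constraint $0 < w_i \le 1$. Given any LTF represented by a real weight vector, I would rescale so that $\max_i |w_i| = 1$ and flip signs on the negative coordinates; the sign flip amounts to coordinate complementation on $\{0,1\}^n$, so each orbit under these $2^n$ symmetries contains at most $2^n$ Semi-Goldilocks representatives. This gives $\SGold(n) \ge \LTF(n)/2^{O(n)}$, which matches the upper bound within the $O(n)$ slack in the exponent.

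For (ii), the Goldilocks condition differs from Semi-Goldilocks by one additional linear inequality, which (reading back through the Hassett dictionary) is the $g=0$ stability condition $\sum_i w_i > 2$ defining $\Weights$. I would exhibit an involution on $\SGold$ pairing Goldilocks elements with non-Goldilocks elements. The natural candidate is the dualization $f \mapsto f^*$ with $f^*(\vec x) := 1 - f(\vec 1 - \vec x)$: on a normalized weight representative $(\vec w, 1)$ this exchanges threshold $1$ with threshold $\sum_i w_i - 1$, hence exchanges the half-space $\sum_i w_i > 2$ with $\sum_i w_i < 2$. The fixed points are the self-dual LTFs, satisfying $\sum_i w_i = 2$; this single hyperplane constraint drops the ambient parameter dimension by one, so a standard dimension count gives at most $2^{o(n^2)}$ fixed points, which are absorbed into lower-order terms.

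The principal obstacle is (i): showing that the positivity-plus-scale restriction does not discard more than a $2^{O(n)}$ fraction of all LTFs. This amounts to verifying that Zuev's lower-bound construction can be arranged with positive weights (up to the $2^n$-fold sign symmetry), which should be routine but requires revisiting the details of the LTF enumeration. The fixed-point bound in (ii) is a coarser estimate that should follow by inspection of the self-duality condition, and the involution argument then completes the $g=0$ refinement.
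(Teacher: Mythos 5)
Your high-level strategy — reduce to the Irmatov/Zuev asymptotic for $\LTF(n)$, then use the dual involution to extract the factor of $\tfrac12$ — matches the paper's in spirit, but both halves have genuine gaps in the execution.

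For the $\SGold(n)$ bound, the rescale-and-flip argument handles positivity (complementation makes the weights nonnegative) but does not touch smallness, which is the real constraint. Smallness means $w_i \le \theta$ for all $i$, equivalently $f(\hat e_i) = 0$ for every singleton; rescaling so that $\max_i|w_i| = 1$ says nothing about $\theta$, and in fact constant-$1$ and many other positive LTFs fail smallness outright. The paper instead constructs a bijection $\phi$ from Semi-Goldilocks LTFs to \emph{nondegenerate} positive LTFs (replacing weak weights with large ones and vice versa), which combined with the $\binom{n}{k}$-fold degeneracy count and the fact that each $N$-class of a nondegenerate LTF has exactly $2^n$ members with exactly one positive representative yields the exact identity $\LTF(n) = \sum_{k} \binom{n}{k} 2^k \SGold(k)$; a careful error analysis against Irmatov's formula then gives $\SGold(n) \sim \LTF(n)/2^n$. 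Your sketch leaves both the degeneracy accounting and the smallness bijection untreated, and these are precisely the nontrivial steps.

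For the $\Gold(n) \sim \SGold(n)/2$ claim, the proposed involution $f \mapsto f^d$ does correctly exchange ample with non-ample, but it does \emph{not} restrict to an involution on $\SGold$: the dual of a positive small LTF need not be small. Concretely, the constant-$0$ function is Semi-Goldilocks (positive and small, not ample), yet its dual, the constant-$1$ function, is not small, so your claimed pairing of Goldilocks with non-Goldilocks inside $\SGold$ breaks at the first example. The paper fixes this by partitioning all positive LTFs (not just the small ones), showing via Proposition \ref{ps.pas.prop} that the dual of a positive, ample, non-self-dual LTF is small, and then bounding the two ``correction'' sets — positive-not-small-not-self-dual (denoted $\PL(n)$, which are necessarily ample) and the self-dual ones — relative to $\SGold(n)$. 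Separately, the fixed-point estimate you cite is wrong in magnitude: $\SD(n) = \LTF(n-1) = 2^{(1-o(1))n^2}$, which is emphatically not $2^{o(n^2)}$. What actually makes the self-dual term negligible is the ratio $\LTF(n-1)/\LTF(n) = \Theta(n/4^n) \to 0$, an estimate extracted from Irmatov's formula (Lemma \ref{ltf.ratios.lem}); the ``dimension drop'' heuristic gives a qualitatively wrong bound even though the quantitative conclusion it is supporting here is ultimately true.
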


We also obtain numerical results for $n \leq 9$:

\begin{center}
\begin{figure}[!htbp]
\caption{The number of chambers of $\Weightsg$ for small $n$}
\label{table.dgn}
\begin{tabular}{|c||c|c|}
\hline
$n$ & $\SGold(n)$ & $\SGold(n)/S_n$\\
\hline
1& 1& 1\\
2& 2& 2\\
3& 9& 5\\
4& 96& 17\\
5& 2690& 92\\
6& 226360& 994\\
7& 64646855& 28262\\
8& 68339572672& 2700791\\
9& 281196831947304& 990331318 \\
\hline
\end{tabular}
\end{figure}
\end{center}

\begin{center}
\begin{figure}[!htbp]
\caption{The number of chambers of $\Weights$ for small $n$}
\label{table.d0n}
\begin{tabular}{|c||c|c|}
\hline
$n$ & $\Gold(n)$ & $\Gold(n)/S_n$\\
\hline
3& 1& 1\\
4& 27& 5\\
5& 1087& 36\\
6& 105123& 448\\
7& 31562520& 13642\\
8& 33924554539& 1336943\\
9& 140306938682875& 493888290 \\
\hline
\end{tabular}
\end{figure}
\end{center}
We present both the number of chambers as well as the number of chambers up to the natural action of $S_n$.\\
 
Furthermore, we make precise the difficulty of enumerating chambers of $\Weightsgen$ as follows.

\begin{theorem}[see Theorems \ref{coNPcomp.sgold.thm} and \ref{coNPcomp.thm}]\label{thm.intro.hard}
Given an arbitrary boolean function $f$, the problems of determining whether $f$ is a Semi-Goldilocks or Goldilocks function are co-NP-complete.
\end{theorem}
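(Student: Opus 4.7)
The plan is to establish, for each of the two classes (Semi-Goldilocks and Goldilocks), both membership in co-NP and co-NP-hardness.

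\textbf{Membership in co-NP.} Semi-Goldilocks and Goldilocks functions are LTFs subject to a small number of additional, locally verifiable constraints coming from the Hassett chamber structure (e.g.\ prescribed values on distinguished inputs such as $\vec{0}$ or $\vec{1}$, and a monotonicity-type condition on comparable inputs). Hence a polynomial-size certificate that $f$ is \emph{not} Semi-Goldilocks consists either of Hegedus's polynomial-size certificate \cite{hegedusconp} that $f$ is not an LTF, or of an explicit input (or input pair) witnessing the failure of one of the finitely many extra conditions. The Goldilocks case is identical, with one additional locally checkable constraint for the $g=0$ normalization.

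\textbf{co-NP-hardness.} I would reduce from the known co-NP-complete problem of testing whether a boolean function is an LTF. Given an arbitrary boolean function $f$ on $n$ variables, I would construct an auxiliary boolean function $\tilde{f}$ on $n+k$ variables (with $k$ a small constant) so that $\tilde{f}$ is Semi-Goldilocks (respectively, Goldilocks) if and only if $f$ is an LTF. The role of the $k$ extra coordinates is to act as a gadget that forces the Goldilocks-specific constraints to hold automatically, while the restriction of $\tilde{f}$ to a particular slice of the auxiliary coordinates recovers $f$. A convenient template is
\[
\tilde{f}(\vec{x}, \vec{y}) \;=\; A(\vec{y}) \,\vee\, \bigl(B(\vec{y}) \wedge f(\vec{x})\bigr),
\]
where $A, B$ are simple gadgets chosen to hard-code the extra Goldilocks structure.

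\textbf{Main obstacle.} The delicate step is designing the gadget so that (i) the extra Semi-Goldilocks/Goldilocks conditions become automatic irrespective of $f$, and (ii) $\tilde{f}$ is an LTF precisely when $f$ is. The forward direction is straightforward: one extends any threshold representation of $f$ to a threshold representation of $\tilde{f}$ by assigning the auxiliary coordinates suitably large or small weights so that they dominate the threshold. The reverse direction is the real difficulty, as one must rule out ``parasitic'' threshold representations of $\tilde{f}$ that fail to restrict, on the relevant slice, to a threshold representation of $f$. The standard way to handle this is to choose $A$ and $B$ so that the prescribed values of $\tilde{f}$ on certain test inputs force the auxiliary weights in \emph{any} LTF representation to dominate (or be dominated by) the $\vec{x}$-weights in a prescribed way, thereby pinning down the $\vec{x}$-part to be a genuine LTF realization of $f$. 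Once the Semi-Goldilocks reduction is in place, the Goldilocks reduction should follow by a small modification of the gadget that additionally enforces the $g=0$ condition.
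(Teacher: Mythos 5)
Your membership-in-co-NP argument is essentially the paper's: the witnesses are either an asummability certificate that $f$ is not an LTF (from \cite{hegedusconp}), a pair $\x \le \y$ with $f(\x) > f(\y)$ (failure of positivity), a singleton $\hat{e}_i$ with $f(\hat{e}_i) = 1$ (failure of smallness), or --- for Goldilocks --- a negation pair $\x, \nx$ with $f(\x) = 0 = f(\nx)$ (failure of ampleness). Your phrasing ``prescribed values on $\vec{0}$ or $\vec{1}$'' should be replaced by the singletons $\hat{e}_i$ and negation pairs, but the idea is right.

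The genuine gap is in the hardness reduction: you identify the key difficulty --- ruling out parasitic threshold representations of the gadgetized $\tilde{f}$ --- but you do not resolve it. Without concrete choices of $A$ and $B$ and an actual argument pinning down the auxiliary weights in an arbitrary realization of $\tilde{f}$, the reduction is a plan, not a proof; and the template $\tilde{f} = A(\y) \vee (B(\y) \wedge f(\x))$ is not obviously compatible with the smallness constraint (it tends to force some singletons in the $\y$-block to evaluate to $1$). The paper sidesteps the gadget problem entirely and keeps the number of variables fixed. For Semi-Goldilocks, it reduces from \Thresf by two in-place transformations: first, using the Chow parameters of $f$, it $\vu$-complements in the coordinates where $a_i < 2^{n-1}$ to produce $\fp$; second, it forms $\fps$ by ``zeroing out'' every coordinate $i$ with $\fp(\hat{e}_i) = 1$, i.e.\ $\fps(\x) := \fp(\y)$ where $y_i = 0$ whenever $\fp(\hat{e}_i) = 1$ and $y_i = x_i$ otherwise. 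This forces $\fps$ to be positive and small by construction, so $\fps$ is Semi-Goldilocks if and only if it is an LTF, and the paper argues that these transformations do not change LTF status. For Goldilocks, rather than modifying the gadget, the paper reduces from \SGoldf via the dual: it outputs $\textproc{Gold}(f) \vee \textproc{Gold}(f^d)$, leaning on the fact that for any LTF exactly one of $f, f^d$ is ample (or both, when $f$ is self-dual). So both of the paper's reductions exploit structure specific to positivity/smallness/ampleness (Chow parameters, duality) rather than a generic gadget, which is what buys them the ``iff'' without the parasitic-representation headache your sketch runs into.
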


Finally, we note that Song \cite{song} began studying this problem using the theory of hyperplane arrangements. In the end Song modified their problem to be better suited for a graph theoretic approach and did not obtain results for Hassett's problem.

\subsection{Outline}
In Section \ref{sec:ag} we motivate the problem and recall the definition of the domain of admissible weights from \cite{Hassett2002}. In Section \ref{sec:ltfs} we state definitions and main properties of linear threshold functions, introduce our subclasses of linear threshold functions known as ``Semi-Goldilocks" and ``Goldilocks" linear threshold functions, and prove that these are in bijection to chambers of $\Weightsgen$. 

Section \ref{sec:properties} serves to demonstrate further properties of Semi-Goldilocks and Goldilocks LTFs that are necessary for both the proofs of our asymptotics as well as our algorithm for counting chambers. The purpose of Section \ref{sec:nondeg} is to motivate and prove Lemma \ref{ltf.nondegen.formula.lem}, which is instrumental in Section \ref{sec.asymptotics} where we compute asymptotics. In Section \ref{sec.asymptotics} we introduce known asymptotics for linear threshold functions and apply properties of linear threshold functions to obtain asymptotic formulas for the number of chambers. In Section \ref{sec.hardness} we show that deciding whether a boolean function is Semi-Goldilocks or Goldilocks is co-NP-complete.

Finally, in Appendix  \ref{sec.alg} we present our algorithm for enumerating the chambers of $\Weightsgen$ and prove its validity.

\subsection*{Acknowledgements}
This research was completed as part of the SUMRY (Summer Undergraduate Mathematics Research at Yale) program during the summer of 2017, where the first author was a mentor and the other authors were participants. SUMRY is supported in part by NSF Grant CAREER DMS-1149054. K.A. was supported in part by an NSF Postdoctoral Fellowship.

We thank Dori Bejleri, Patricio Gallardo, Brendan Hassett, Dave Jensen, Steffen Marcus, Sam Payne, and Dhruv Ranganathan for helpful discussions and suggestions. We thank Nicolle Gruzling for providing us with code used to enumerate linear threshold functions for $n \leq 9$. Finally, we thank the referee for many usual suggestions that helped improve this paper.

\section{Hassett's chamber decomposition $\Weightsgen$} \label{sec:ag}
The moduli space $\calM_{g,n}$ parametrizes smooth curves of genus $g$ with $n$ marked points. Its Deligne-Mumford compactification $\overline{\calM}_{g,n}$ parametrizes \emph{stable} curves $(C, p_1 + \cdots + p_n)$ of genus $g$ with $n$ marked points. Here stable means: \begin{enumerate} \item  $C$ has at worst nodal singularities, \item the points $p_1, \cdots, p_n$ are distinct, and \item the log canonical bundle $\omega_C(p_1 + \cdots + p_n)$ is ample. \end{enumerate}

\medskip

As mentioned in the introduction, Hassett \cite{Hassett2002} constructed a large class of modular compactifications of $\calM_{g,n}$ by adding weights to the marked points. In particular, consider $\overline{\calM}_{g, \vec w}$, where $\vec w = (w_1, \cdots, w_n) \in \mathbb{Q}^n$ with $0 < w_i \leq 1$. These moduli spaces parametrize \emph{weighted} stable pointed curves. Stability now means: \begin{enumerate} \item $C$ has at worst nodal singularities, \item points $p_1, \cdots, p_n$ can collide as long as their total weight is $\leq 1$, and \item the log canonical bundle $\omega_C(w_1p_1 + \cdots + w_n p_n)$ is ample. \end{enumerate}

\medskip

Note that if $\vec w = (1, \cdots, 1)$ then $\overline{\calM}_{g, \vec w} \cong \overline{\calM}_{g,n}$.

\begin{definition}\label{def:dgn}
The space of admissible weights $\Weightsgen \subseteq \R^n$  is defined as \begin{equation*}
\Weightsgen := \{(w_1,w_2, \ldots, w_n) \in \mathbb{R}^n: 0 < w_j \leq 1 \;\,\forall j\text{ and } w_1 + w_2 + \ldots + w_n > 2 - 2g\}. 
\end{equation*} 

Note that as long as $n > 0$, the space $\Weightsgen$ is identical for all $g > 0$. We denote the space of admissible weights where $g = 0$ by $\Weights$, and when $g > 0$ by $\Weightsg$. 
\end{definition}

Note that both of these domains are convex polyhedra in $\R^n$.

\begin{definition}\label{def:decomp}
Given a convex polyhedron $\D$, a \textit{chamber decomposition} of $\D$ consists of a finite set $\mathcal{W}$ of hyperplanes, which we call the \textit{walls} of the chamber decomposition. The \textit{chambers} of the decomposition are the connected components of the complement of the union of the walls with respect to $\D$. \\

Consider the convex polyhedron $\Weightsgen$. The \textit{fine chamber decomposition} is described by the walls
\[
	\Wf = \{\sum_{j\in S} w_j = 1:S\subseteq 	\nrange, \textit{ } S \neq \emptyset\}, 
\]
where $\nrange := \{1, \ldots, n\}$.
\end{definition}

Hassett proved the following theorem regarding the fine chamber decomposition. 

\begin{theorem}\label{CoarsestDec}\cite[Proposition 5.1]{Hassett2002}
The fine chamber decomposition is the coarsest chamber decomposition of $\Weightsgen$ such that $\Famg$ is constant on each chamber, where $\Famg \to \overline{\calM}_{g, \vec w}$ is the universal family. 
\end{theorem}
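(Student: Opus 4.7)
The plan is to establish the theorem in two halves: (i) that $\Famg$ is constant on every chamber of $\Wf$, and (ii) that no coarser decomposition suffices, i.e., for every wall $W_S := \{\sum_{j \in S} w_j = 1\}$, crossing $W_S$ genuinely changes $\Famg$.

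For (i), the key observation is that $\vec{w}$-stability of a pointed curve $(C, p_1, \ldots, p_n)$ depends on $\vec{w}$ only through the combinatorial data of which subsets $S \subseteq \nrange$ satisfy $\sum_{j \in S} w_j \leq 1$. Condition (ii) of weighted stability---that any colliding subset of marked points has total weight at most $1$---is of exactly this form. The ampleness condition (iii) for $\omega_C(\sum w_i p_i)$ can be checked component by component; on the rational components that may appear, positivity of degree again reduces to inequalities $\sum_{j \in S} w_j \gtrless 1$ indexed by subsets of $\nrange$. Since each such inequality has fixed sign within a chamber of $\Wf$, the set of $\vec{w}$-stable objects---hence the moduli stack together with its universal family---is unchanged across the chamber.

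For (ii), fix a nonempty $S \subseteq \nrange$ and choose weights $\vec{w}^-$ and $\vec{w}^+$ in adjacent chambers of $\Wf$ separated only by $W_S$ (this is possible by a generic-position argument at a smooth point of $W_S$ inside $\Weightsgen$). I would then exhibit a pointed curve $(C, p_1, \ldots, p_n)$ in which the marked points $\{p_j\}_{j \in S}$ all coincide at a single point while the remaining marked points are chosen generically and distinct; for $g \geq 1$ one may take $C$ smooth, while for $g = 0$ one takes $C = \mathbb{P}^1$, using the defining inequality $\sum w_j > 2$ of $\Weights$ to ensure that $\omega_{\mathbb{P}^1}(\sum w_i p_i)$ has positive degree and is therefore ample. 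This curve is $\vec{w}^-$-stable but violates condition (ii) for $\vec{w}^+$; thus the two moduli stacks parametrize different objects, and $\Famg$ must differ between the two chambers.

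The main obstacle is the component-by-component bookkeeping in (i): one must confirm that no additional walls beyond those in $\Wf$ are forced by the ampleness condition (iii). This requires analyzing the rational tails and bridges that arise when the reduction morphism associated with varying $\vec{w}$ collapses a subcurve, and showing that the inequalities governing positivity of $\omega_C(\sum w_i p_i)$ restricted to each such rational component are always indexed by subsets of $\nrange$---so that they are already among the walls $\sum_{j \in S} w_j = 1$. Once this is verified, the two halves fit together to identify $\Wf$ as precisely the coarsest decomposition on whose chambers $\Famg$ is constant.
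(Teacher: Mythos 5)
This statement is not proved in the paper at all: Theorem~\ref{CoarsestDec} is simply a citation of Hassett's Proposition~5.1, so there is no in-paper proof to compare against. That said, your two-part plan --- (i) stability data depends on $\vec{w}$ only through the signs of $\sum_{j\in S} w_j - 1$, so $\Famg$ is locally constant; (ii) each wall $W_S$ is ``real'' because one can exhibit a curve that is stable on one side and not the other --- is precisely the structure of Hassett's argument, and part (ii) as you sketch it works: the test object with all $p_j$, $j \in S$, colliding and the rest generic, smooth for $g \geq 1$ and $\mathbb{P}^1$ for $g = 0$, does exactly what you want, and the defining inequality $\sum w_i > 2$ correctly handles ampleness for $g=0$ (and simultaneously shows why walls with $|S| \geq n-1$ never meet $\Weights$, consistent with the paper's Remark~2.5).

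The place you explicitly punt --- ``the main obstacle is the component-by-component bookkeeping in (i)'' --- is not a minor detail to be deferred; it is the content of the proposition, since a priori the ampleness condition could introduce inequalities that are not of the form $\sum_{j\in S} w_j \gtrless 1$. To close it, enumerate component types of a weighted stable curve. On a genus-$0$ component $E$ meeting the rest of $C$ in $k$ nodes and carrying the marked points indexed by $S' \subseteq \nrange$, the degree of $\omega_C(\sum w_i p_i)|_E$ is $-2 + k + \sum_{j\in S'} w_j$. For $k \geq 2$ this is positive automatically since each $w_j > 0$; for $k = 1$ (a rational tail) positivity is $\sum_{j\in S'} w_j > 1$, which is already a wall of $\Wf$; and $k = 0$ (the irreducible $\mathbb{P}^1$) can only occur when $g=0$, where positivity is $\sum_{j\in \nrange} w_j > 2$, the inequality defining $\Weights$ rather than a wall in its interior. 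On a component of genus $\geq 1$, $\omega_C|_E$ already has nonnegative degree and the degree is strictly positive once $E$ carries a node or a marked point with positive weight, so no new inequalities arise. This accounting shows the walls of $\Wf$ are exhaustive, and fills the gap you flagged; once you also note that a chamber of weights yields not only the same set of stable objects but the same moduli functor (and hence the same stack and universal family), the two halves assemble into a complete proof.
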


\begin{remark}
$ $
\begin{enumerate}
\item Since $\Weightsgen$ is identical for all $g >0$, there are two combinatorial objects of study: the fine chamber decomposition of $\Weights$ and the fine chamber decomposition of $\Weightsg$. Furthermore, these can be viewed as the restriction of the same chamber decomposition of $\R^n$ to two distinct polyhedra.
\item Our definition of $\Wf$ differs slightly from the one presented in \cite{Hassett2002}. We note that in the $g = 0$ case they coincide, and in the $g > 0$ case we add walls that were missing from \cite{Hassett2002}. In particular, when $g > 0$ one must consider walls of size $|S| \le n$. It is only in the $g = 0$ case where fine walls must satisfy $|S| \le n-2$. 
\end{enumerate}
 \end{remark}

The purpose of this paper is to enumerate the fine chambers (or ``chambers") of $\Weights$ and $\Weightsg$. 

\begin{figure}[!htbp]
\begin{center}
\begin{tikzpicture}[scale = 0.5]
\draw [dashed, fill=gray] (3,8) -- (6,6) -- (9,2) -- (3,8);
\draw [fill=gray, opacity=.3] (3,8) -- (9,8) -- (9,2) -- (4.5,4) -- (3,8);
\draw (3,2) -- (0,0) -- (6,0) -- (9,2) -- (9,8) -- (6,6) -- (0,6) -- (3,8) -- (3,2) -- (9,2) -- (6,0) -- (6,6) -- (9,8) -- (3,8) -- (0,6) -- (0,0);
\draw [black] (6,6) -- (9,2) -- (3,2) -- (6,6);
\draw [black] (3,2) -- (6,0) -- (6,6) -- (3,8) -- (3,2);
\draw [black] (0,6) -- (6,6) -- (3,2) -- (0,6);
\draw [black] (6,0) -- (0,6) -- (3,2) -- (6,0);
\draw (6,0) -- (3,8);
\draw (0,6) -- (9,2);

\end{tikzpicture}

\caption{Chamber decomposition of $\mathcal D_{g,3}$. The dark gray hyperplane represents $x + y + z = 2$, and the shaded chamber represents the only fine chamber on $3$ variables.}
\label{LTF.chamber.diag}
\end{center}
\end{figure}

\section{A bijection from chambers of $\Weightsgen$ to linear threshold functions}\label{sec:ltfs}
The goal of this section is to construct a bijection from chambers of $\Weightsgen$ to linear threshold functions, which we use to provide numerical results for $n \leq 9$. We begin by providing properties of linear threshold functions and then determine subclasses of linear threshold functions which have a natural bijection to chambers of $\Weightsgen$.
\subsection{Linear threshold functions}
We first introduce linear threshold functions and many of their basic properties. For proofs of many of the theorems stated here, we direct those interested to the original papers as well as to \cite{gruzling08}. 

\begin{definition}\label{def:ltf}
A \emph{linear threshold function} (or LTF or threshold function) in $n$ variables is a function $f:\{0, 1\}^n \mapsto \{0, 1\}$ that can be expressed as 
\[
	f(\vec{x}) = \operatorname{sgn}(\vec{w} \cdot \vec{x} - \theta), 
\]
for some $\vec{w} \in \Q^n$ and $\theta \in \Q$.  A particular $(\vec{w}, \theta)$ pair is called a \emph{realization} of $f$, where $\vec{w}$ is called the \emph{weight vector} and $\theta$ is called the \emph{bias}. \end{definition}

We use the convention that $\operatorname{sgn}$ takes a value of $1$ on positive arguments and $0$ on nonpositive arguments, although we note that it is always possible to choose a realization which avoids the ambiguity of $\operatorname{sgn}(0)$ \cite[Section 3.2.1]{anthonyThres}. Linear threshold functions are also called \emph{separable} boolean functions. Finally, we call the space of $n$-variate linear threshold functions $\Threshn$.

\begin{remark}
Note that a single LTF will have infinitely many realizations. 
\end{remark}

\begin{definition}
For any two LTFs in $n$ variables $f$ and $h$, we say that $f \geq h$ if for all $\x \in \{0, 1\}^n$, we have $f(\x) \geq h(\x)$.
\end{definition}

\begin{theorem}\cite[Theorem 1, Theorem 5]{Winder61}
All linear threshold functions are \emph{unate}, that is, for each variable $x_i$, they are increasing in either $x_i$ or its negation.
\end{theorem}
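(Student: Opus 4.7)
The plan is to read unateness directly off an arbitrary realization by examining how the argument of $\sgn$ changes when we flip a single coordinate. I would fix a realization $(\w, \theta)$ of $f$ and a coordinate $i \in \{1, \ldots, n\}$, and then analyze the one-variable restriction $x_i \mapsto \sgn(\w \cdot \x - \theta)$ obtained by holding the remaining coordinates at an arbitrary assignment $\y \in \{0,1\}^{n-1}$.

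The key observation I would use is that, for any such $\y$, flipping $x_i$ from $0$ to $1$ changes $\w \cdot \x$ by exactly $w_i$, independently of the values in $\y$. Since $\sgn$ is weakly increasing, this yields the following dichotomy: if $w_i \geq 0$, then $f$ is monotone increasing in $x_i$ with the other coordinates fixed at any $\y$, while if $w_i \leq 0$, then $f$ is monotone decreasing in $x_i$, i.e.\ monotone increasing in its negation $1 - x_i$. Either way $f$ is unate in the $i$th variable, and running the argument over all $i \in \{1, \ldots, n\}$ would give the theorem.

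The only subtle point, and in some sense the ``hard part,'' is the handling of the boundary case $w_i = 0$ together with the ambiguity of $\sgn$ at $0$; but both issues dissolve because $\sgn$ is weakly monotone under the convention $\sgn(0) = 0$ already adopted in the paper (when $w_i = 0$, $f$ simply does not depend on $x_i$ and is trivially unate in it). If a cleaner argument with strict monotonicity were desired, I could first invoke the remark from \cite[Section 3.2.1]{anthonyThres} to replace $(\w, \theta)$ by a realization for which $\w \cdot \x - \theta \neq 0$ on all of $\{0,1\}^n$, after which the same coordinate-flip argument goes through without any edge cases.
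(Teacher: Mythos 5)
Your argument is correct and is the standard one: for a fixed realization $(\w,\theta)$, the map $x_i \mapsto \w \cdot \x - \theta$ with the other coordinates held fixed is affine with slope $w_i$, so composing with the weakly monotone $\sgn$ makes $f$ nondecreasing in $x_i$ when $w_i \geq 0$ and nondecreasing in $1 - x_i$ when $w_i \leq 0$; the boundary case $w_i = 0$ is handled either way. The paper does not supply its own proof here (it cites Winder), but your reasoning matches the classical argument behind that citation, and your remark about removing the $\sgn(0)$ ambiguity by choosing a realization with $\w \cdot \x \neq \theta$ on the cube is a reasonable optional tidying step.
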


\begin{definition}
We denote by $T_f$ the \emph{truth set} of $f$ and by $F_f$ the \emph{false set} of $f$. That is, 
\[
	T_f := f^{-1}(1) \quad \text{and}\quad F_f := f^{-1}(0).
\]
If there is no ambiguity, we use $T$ and $F$ to denote these sets.
\end{definition}

\begin{definition}
For any linear threshold function $f$, we define the \emph{dual linear threshold function} $f^d$ by:
\[
	f^d(\vec{x}) := \overline{f(\nx)}, 
\]
where $\nx = (\n{x_1}, \ldots, \n{x_n})$, that is, the negation of $\vec{x}$. If $f = f^d$ we call $f$ \emph{self-dual}.
\end{definition}

\begin{remark}\label{sd.01pairs.rem}
An LTF $f$ is self-dual if and only if every negation pair $\x, \nx \in \{0, 1\}^n$ satisfies $f(\x) \neq f(\nx)$.
\end{remark}

The following characterization of linear threshold functions is frequently useful.

\begin{theorem} \cite[Theorem 2]{gruzling08}\label{asum.thm}
Let $f$ be a boolean function with true set $T = \{\x_1, \ldots, \x_m\}$ and false set $F = \{\x_{m+1}, \ldots, \x_{2^n} \}$. Then $f$ is a linear threshold function if and only if for any set of natural numbers $c_i \ge 0$ (with $1 \le i \le 2^n$), the equalities
\[
	\sum_{i =1}^{m} c_i = \sum_{i = m+1}^{2^n} c_i \quad \text{and}\quad \sum_{i = 1}^m c_i \x_i = \sum_{i = m+1}^{2^n} c_i \x_i
\]
imply that $c_i = 0$ for all $i$.
\end{theorem}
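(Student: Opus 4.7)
The plan is to prove the two implications separately, treating the forward direction as an easy verification and using a convex-separation argument for the reverse.

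For the $(\Rightarrow)$ direction, I would start with a realization $(\vec{w}, \theta)$ of $f$ chosen so that $\vec{w} \cdot \x_i - \theta \neq 0$ for all $i$ (possible by the convention mentioned after Definition \ref{def:ltf}), so that $\vec{w} \cdot \x_i - \theta > 0$ for $i \le m$ and $\vec{w} \cdot \x_i - \theta < 0$ for $i > m$. Given nonnegative integers $c_i$ satisfying both identities, take the dot product of the second identity with $\vec{w}$ and subtract $\theta$ times the first; this rearranges to
$$\sum_{i=1}^m c_i(\vec{w}\cdot\x_i - \theta) \;=\; \sum_{i=m+1}^{2^n} c_i(\vec{w}\cdot\x_i - \theta).$$
The left side is a sum of nonnegative terms and the right side a sum of nonpositive terms, so both vanish termwise; since the factor $\vec{w}\cdot\x_i - \theta$ is never zero, every $c_i = 0$.

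For the $(\Leftarrow)$ direction, the idea is to recognize the asummability condition as a disjointness statement about convex hulls. First I would argue that the nonexistence of nontrivial nonnegative \emph{integer} solutions to the two linear equations is equivalent (by clearing denominators, and then by the fact that these equations define a rational polyhedral cone) to the nonexistence of nontrivial nonnegative \emph{real} solutions. A nontrivial real solution $(c_i)$ would necessarily have $\sum_{i \le m} c_i = \sum_{i > m} c_i > 0$ (if that common sum were $0$, all $c_i$ would already be $0$), so normalizing would exhibit a point of $\mathrm{conv}(T) \cap \mathrm{conv}(F)$. Conversely any such shared point gives a nontrivial solution. So the asummability condition is equivalent to $\mathrm{conv}(T) \cap \mathrm{conv}(F) = \emptyset$.

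With this equivalence in hand, the separating hyperplane theorem applied to the disjoint compact convex sets $\mathrm{conv}(T)$ and $\mathrm{conv}(F)$ furnishes $\vec{w} \in \R^n$ and $\theta \in \R$ with $\vec{w} \cdot \x > \theta$ for $\x \in T$ and $\vec{w} \cdot \x < \theta$ for $\x \in F$, which is a realization of $f$. The main subtlety, and the last step I would address, is obtaining \emph{rational} $\vec{w}$ and $\theta$ as required by Definition \ref{def:ltf}: since $T$ and $F$ are finite sets of integer points, the set of valid $(\vec{w}, \theta)$ is an open rational polyhedron (cut out by the finitely many strict inequalities $\vec{w} \cdot \x_i > \theta$ and $\vec{w}\cdot\x_j < \theta$), and nonemptiness over $\R$ forces nonemptiness over $\Q$ by density. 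This is the step most easily mishandled, but it is the only point where care is required.
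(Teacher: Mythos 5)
The paper does not prove this theorem; it is cited verbatim from Gruzling's thesis \cite{gruzling08} (the result goes back to Elgot, Chow, and others), so there is no in-paper argument to compare against. Your proof is the standard one and is correct as written. The forward direction is a clean sign-splitting argument: after pairing the two equalities you obtain $\sum_{i\le m} c_i(\vec{w}\cdot\x_i-\theta) = \sum_{i>m} c_i(\vec{w}\cdot\x_i-\theta)$ with the left side term-by-term nonnegative and the right side term-by-term nonpositive, forcing all terms to vanish. The reverse direction correctly reinterprets asummability as $\mathrm{conv}(T)\cap\mathrm{conv}(F)=\emptyset$ (the passage from integer to rational to real solutions via rationality of the polyhedral cone is the right move, and the normalization to a convex combination is valid since the common sum is positive for a nontrivial solution), and then invokes strict separation of disjoint compact convex sets, with density of $\Q^{n+1}$ in the open realization region to land in $\Q$.

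Two small points to tighten in a final write-up. First, the case $T=\emptyset$ or $F=\emptyset$: the separating-hyperplane step does not apply when one side is the empty set, but the theorem still holds trivially since constant functions are LTFs and asummability is vacuous there (the first equality forces every $c_i=0$ when one of the index sets is empty); you should dispose of this case explicitly before invoking separation. Second, the phrase ``open rational polyhedron'' for the realization set is slightly loose -- the substance (a nonempty solution set of finitely many strict linear inequalities with integer coefficients is an open subset of $\R^{n+1}$, hence meets $\Q^{n+1}$) is exactly right, and that is all you need.
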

\begin{definition} We say that such a function satisfying Theorem \ref{asum.thm} satisfies the \emph{asummability criterion}. \end{definition} This is most frequently used through the following corollary. 
\begin{corollary}\label{simpbad.cor}
If there exist two negation pairs $\x, \nx$ and $\y, \n{\y}$ in $\{0, 1\}^n$ such that $f(\x) = 0 = f(\nx)$ and $f(\y) = 1 = f(\n{\y})$, then $f$ is not a linear threshold function.
\end{corollary}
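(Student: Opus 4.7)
The plan is to derive this as a direct application of Theorem \ref{asum.thm} (the asummability criterion). The key observation is that for any negation pair $\vec{z}, \n{\vec{z}} \in \{0,1\}^n$, the coordinate-wise sum satisfies $\vec{z} + \n{\vec{z}} = (1, 1, \ldots, 1)$, the all-ones vector. This means that two different negation pairs always produce the same vector sum, which is exactly what the asummability criterion forbids when one pair lies entirely in $T_f$ and the other lies entirely in $F_f$.

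Concretely, I would proceed as follows. Suppose for contradiction that $f$ is a linear threshold function, and let $T_f = \{\vec{x}_1, \ldots, \vec{x}_m\}$ and $F_f = \{\vec{x}_{m+1}, \ldots, \vec{x}_{2^n}\}$ be its true and false sets. By hypothesis, $\vec{y}, \n{\vec{y}} \in T_f$ and $\vec{x}, \n{\vec{x}} \in F_f$. Define coefficients $c_i \geq 0$ by setting $c_i = 1$ whenever $\vec{x}_i \in \{\vec{x}, \n{\vec{x}}, \vec{y}, \n{\vec{y}}\}$ and $c_i = 0$ otherwise. Then
\[
    \sum_{i=1}^{m} c_i = 2 = \sum_{i=m+1}^{2^n} c_i,
\]
and
\[
    \sum_{i=1}^{m} c_i \vec{x}_i = \vec{y} + \n{\vec{y}} = (1, \ldots, 1) = \vec{x} + \n{\vec{x}} = \sum_{i=m+1}^{2^n} c_i \vec{x}_i.
\]
Both asummability equalities hold, yet the $c_i$ are not all zero. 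By Theorem \ref{asum.thm}, this contradicts the assumption that $f$ is a linear threshold function.

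There is no real obstacle here: the entire content of the corollary is packaged into the fact that negation pairs sum to the all-ones vector, which is the minimal nontrivial certificate of non-separability that the asummability criterion can detect. The main step is simply recognizing which four points to use and which coefficients to assign, after which the two required equalities are immediate.
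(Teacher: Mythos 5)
Your proof is correct and is exactly the argument the paper intends: the corollary is stated immediately after the asummability criterion precisely because negation pairs both sum to the all-ones vector, making the choice $c_i = 1$ on $\{\x, \nx, \y, \n{\y}\}$ and $c_i = 0$ elsewhere a nontrivial certificate violating Theorem \ref{asum.thm}. The paper leaves the proof implicit, but your derivation matches the intended one.
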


\subsection{The correspondence}\label{correspondence}
We associate linear threshold functions with weights $\vec{w} \in \Weightsgen$ by virtue of the following construction.

\begin{definition} For any $\vec{w} \in \Weightsgen$, define the corresponding threshold function $f_\vec{w}$ to be the threshold function given by representation $(\vec{w}, 1)$. That is, 
\[
	f_\vec{w}(\vec{x}) := \operatorname{sgn}(\vec{w} \cdot \vec{x} - 1).
\]
\end{definition}
Let $\Phi : \Weightsg \to \Threshn$ be the map $\Phi(\vec{w}) = f_\vec{w}$, where $\Weightsg$ is the domain of admissible weights for $g > 0$. Similarly, let $\Phi_0 : \Weights \to \Threshn$ be the map $\Phi_0(\w) = f_\w$. 

\begin{remark}
Notice that when defining chambers of $\Weightsgen$, the set of chambers for genus $0$ is a subset of chambers for genus $g>0$, because the stability condition is trivial when $g > 0$ and $n > 0$. Therefore, $\Phi_0$ is the restriction of $\Phi$ to $\Weights \subseteq \Weightsg$. 
\end{remark}

We will prove that $\Phi$ and $\Phi_0$ induce bijections between chambers of $\Weightsgen$ and certain threshold functions. 

\begin{proposition}
Given any two weights $\vec{w}, \vec{w}' \in \Weightsgen$, then the threshold functions $f_\vec{w}, f_{\vec{w}'}$ are equal if and only if they are contained in the same chamber of $\Weightsgen$.
\begin{proof}
Recall that the bounding hyperplanes of the chamber decomposition of $\Weightsgen$ are given by $\sum_{i \in S} x_i = 1$ for all $S \subseteq \nrange$.

For one direction, assume that $\vec{w}, \vec{w}'$ are in different chambers. Then there is at least one boundary hyperplane between them. This hyperplane is defined by $S$ for some $S \subseteq \nrange$. Without loss of generality, we can assume $\vec{w}$ is in the (closed) half-space which contains the origin, and $\vec{w}'$ in the (open) other half. Then by definition,  
\[
	\sum_{i \in S} w_i \le 1 \quad \text{and}\quad \sum_{i \in S} w_i' > 1.
\]
Let $\vec{x} \in \{0, 1\}^n$ be the indicator function for $S$, that is, the vector which is $1$ at index $i \in \nrange$ if $i \in S$ and $0$ otherwise. Consider the threshold functions $f_\w, f_{\w'}$ evaluated at $\vec{x}$. Then $f_\vec{w}(\vec{x}) = 0$ by the inequality above, while $f_{\vec{w}'}(\vec{x}) = 1$.

Conversely, assume that on some vector $\vec{x}$, the functions evaluate as $f_\w(\vec{x}) = 0$ and $f_{\w'}(\vec{x}) = 1$. Let $S = \{i \;| \; x_i = 1 \}$. Then the inequalities $\sum_{i \in S} w_i \le 1$ and $\sum_{i \in S} w_i' > 1$ follow from the definitions. Thus the two points are separated by the boundary hyperplane $\sum_{i \in S} x_i = 1$, and therefore they are in distinct chambers.
\end{proof}
\end{proposition}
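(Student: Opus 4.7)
The plan is to translate the condition ``$\w, \w'$ in the same chamber'' directly into a pointwise equality of threshold functions, using the observation that the walls of the fine decomposition are indexed by the same data as the points of $\{0,1\}^n \setminus \{\vec 0\}$. Namely, for any nonempty $S \subseteq \nrange$ with indicator vector $\x_S \in \{0,1\}^n$, by definition
\[
f_\w(\x_S) \;=\; \operatorname{sgn}\Bigl(\sum_{j \in S} w_j - 1\Bigr),
\]
so the side of the wall $\sum_{j \in S} w_j = 1$ on which $\w$ sits is recorded exactly by the bit $f_\w(\x_S) \in \{0,1\}$.

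For the forward direction I would argue the contrapositive. Suppose $\w$ and $\w'$ lie in distinct chambers. Since $\Weightsgen$ is convex, the straight segment between them sits inside $\Weightsgen$, and since it exits one chamber to enter another it must cross at least one wall $\sum_{j \in S} w_j = 1$. Neither endpoint lies on any wall, so $\w$ and $\w'$ sit in strictly opposite open half-spaces defined by that wall. Evaluating both threshold functions at $\x_S$ then yields $f_\w(\x_S) \ne f_{\w'}(\x_S)$, so $f_\w \ne f_{\w'}$.

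For the converse, suppose $f_\w \ne f_{\w'}$ and pick $\x \in \{0,1\}^n$ with $f_\w(\x) \ne f_{\w'}(\x)$. Note $\x \ne \vec 0$, since both functions agree (at value $0$) on the zero vector. Setting $S := \{i : x_i = 1\}$ and reading off the boxed formula above, exactly one of $\sum_{j \in S} w_j$ and $\sum_{j \in S} w_j'$ exceeds $1$, so $\w$ and $\w'$ lie on opposite sides of the wall $\sum_{j \in S} w_j = 1$ and hence in distinct chambers. The only mild bookkeeping point in the whole argument is the convention $\operatorname{sgn}(0) = 0$, which assigns wall points to the ``false'' side; this is harmless because chamber points never lie on any wall, so the separating wall produced in the forward direction automatically gives strict separation, and no serious obstacle arises.
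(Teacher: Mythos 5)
Your argument is correct and follows essentially the same route as the paper: identify each wall $\sum_{j\in S} w_j = 1$ with the evaluation of $f_\w$ at the indicator vector $\x_S$, and observe that being on opposite sides of a wall is exactly the same datum as disagreeing at $\x_S$. The one place you are slightly more careful than the paper is the forward direction: the paper simply asserts that two points in different chambers must have ``at least one boundary hyperplane between them,'' whereas you justify this by convexity of $\Weightsgen$ (the segment stays in the domain, must cross a wall to change connected components, and neither endpoint lies on a wall, so they sit strictly on opposite sides). That extra sentence closes a small gap in the paper's presentation without changing the underlying argument.
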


This implies immediate corollaries.
\begin{corollary}
Let $\sim$ be the equivalence relation on $\Weightsg$ and $\Weights$ defined as follows: we say that $\vec{w} \sim \vec{w}'$ if and only if $\vec{w}$ and $\vec{w}'$ are contained in the same chamber. Then the quotient maps $\oPhi : \Weightsg/\sim \; \to \Threshn$ induced by $\Phi$ and $\olPhi : \Weights / \sim \; \to \Threshn$ induced by $\Phi_0$ are well-defined and injective.
\end{corollary}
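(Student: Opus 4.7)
The plan is to observe that this corollary is a direct consequence of the biconditional established in the preceding proposition, which says $f_{\vec{w}} = f_{\vec{w}'}$ if and only if $\vec{w}$ and $\vec{w}'$ lie in the same chamber. Thus both well-definedness and injectivity reduce to unpacking what the equivalence classes under $\sim$ are.

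First I would verify well-definedness of $\overline{\Phi}$. Given $[\vec{w}] \in \Weightsg/\sim$, pick any representative $\vec{w}$ and set $\overline{\Phi}([\vec{w}]) := \Phi(\vec{w}) = f_{\vec{w}}$. To show this is independent of representative, suppose $\vec{w} \sim \vec{w}'$, i.e.\ they lie in the same chamber. By the forward direction of the proposition (``same chamber $\Rightarrow$ same threshold function''), we have $f_{\vec{w}} = f_{\vec{w}'}$, so $\Phi(\vec{w}) = \Phi(\vec{w}')$. Hence $\overline{\Phi}$ is well-defined. The same argument applied to the restriction $\Phi_0$ shows $\overline{\Phi}_0$ is well-defined.

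Next I would check injectivity. Suppose $\overline{\Phi}([\vec{w}]) = \overline{\Phi}([\vec{w}'])$, which means $f_{\vec{w}} = f_{\vec{w}'}$. By the reverse direction of the proposition (``same threshold function $\Rightarrow$ same chamber''), this forces $\vec{w}$ and $\vec{w}'$ to lie in the same chamber, and therefore $[\vec{w}] = [\vec{w}']$. Thus $\overline{\Phi}$ is injective, and the identical argument (restricting to $\Weights$) gives injectivity of $\overline{\Phi}_0$.

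There is essentially no main obstacle here: the only subtlety is making sure to quote both directions of the preceding proposition, one for well-definedness and the other for injectivity. The proof will likely be only a few lines, consisting of little more than the two implications above.
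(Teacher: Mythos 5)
Your proof is correct and matches the paper's intent exactly: the paper simply states this corollary as an immediate consequence of the preceding proposition without spelling out the argument, and your two-line unpacking (one direction for well-definedness, the other for injectivity) is precisely what that ``immediate'' implication amounts to.
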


\begin{corollary}
The number of threshold functions is an upper bound on the number of chambers of $\Weightsgen$.
\end{corollary}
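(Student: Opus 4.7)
The plan is to deduce this corollary directly from the preceding corollary, which states that the induced quotient maps $\oPhi : \Weightsg/\sim \; \to \Threshn$ and $\olPhi : \Weights/\sim \; \to \Threshn$ are well-defined and injective. Since a chamber of $\Weightsgen$ is by definition an equivalence class under $\sim$, the set of chambers is in bijection with $\Weightsgen/\sim$, and so the number of chambers of $\Weightsgen$ equals $|\Weightsgen/\sim|$.

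From here the argument is essentially a one-line set-theoretic observation: an injective map $A \hookrightarrow B$ between finite sets (or more generally between sets, interpreted as cardinalities) satisfies $|A| \le |B|$. I would first observe that $\Weightsgen/\sim$ is finite: indeed, the wall set $\Wf$ is a finite collection of hyperplanes, so its complement in the convex polyhedron $\Weightsgen$ has only finitely many connected components. Similarly, $\Threshn$ is finite because there are only finitely many boolean functions $\{0,1\}^n \to \{0,1\}$. Then applying injectivity of $\oPhi$ (in the $g > 0$ case) or $\olPhi$ (in the $g = 0$ case) yields the desired inequality
\[
	|\Weightsg/\sim| \;\le\; |\Threshn| \quad\text{and}\quad |\Weights/\sim| \;\le\; |\Threshn|.
\]

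There is essentially no obstacle here; the corollary is a formal consequence of the previous one. The only minor point to articulate carefully is that the set of chambers of $\Weightsgen$ is precisely $\Weightsgen/\sim$, which follows from the definition of $\sim$ (two weight vectors are equivalent exactly when they lie in the same chamber). Once that identification is made explicit, the bound follows immediately from injectivity of the quotient map into $\Threshn$.
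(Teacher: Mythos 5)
Your proof is correct and takes essentially the same approach the paper intends: the corollary is an immediate consequence of the injectivity of the quotient maps $\oPhi$ and $\olPhi$ established in the preceding corollary, together with the identification of chambers with equivalence classes under $\sim$. The paper itself gives no explicit proof, treating it as immediate, and your argument supplies exactly the formal justification.
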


Note that each equivalence class of $\sim$ is a chamber of $\Weightsgen$. 

\subsection{Semi-Goldilocks and Goldilocks LTFs} Our goal is to characterize the images of $\oPhi$ and $\olPhi$ and thereby induce a bijection from chambers onto subclasses of threshold functions. The number of LTFs is greater than the number of chambers of $\Weightsgen$, as chambers are subject to additional constraints. In particular, the weight of each marked point is strictly positive but $\leq 1$. Moreover, in the $g = 0$ case, we have an ampleness requirement that $\sum w_i > 2$. The goal of this section is to determine a subclass of LTFs that take these requirements into account, so that we can pursue Hassett's question.  The relevant definitions are given below.

\begin{definition}\label{def:sgoldilocks}
We call a threshold function $f$ \emph{Semi-Goldilocks} if it satisfies the following criteria: 
\begin{enumerate}[label=(\roman*)]
\item (\emph{Positivity}) There exists a realization $(\vec{w}, \theta)$ such that $w_i > 0$ for all $i$.
\item (\emph{Smallness}) There exists a realization $(\vec{w}', \theta)$ such that $w'_i \le \theta$ for all $i$.
\end{enumerate}
We refer to a realization with any of these properties by the corresponding name and call a single realization with both properties a \emph{Semi-Goldilocks realization}. We define $\SGold(n)$ to be the number of $n$-variate Semi-Goldilocks linear threshold functions.

We call a threshold function $f$ \emph{Goldilocks} if it is a Semi-Goldilocks function that satisfies the following criterion:
\begin{enumerate} \item[(iii)] (\emph{Ampleness}) There exists a realization $(\vec{w}'', \theta)$ such that $\sum_{i = 1}^n w''_i > 2\theta$. \end{enumerate}

We call a single realization with all three properties (positivity, ampleness, smallness) a \emph{Goldilocks realization}. We define $\Gold(n)$ to be the number of $n$-variate Goldilocks linear threshold functions. 

\begin{remark} The term ``Goldilocks'' was chosen to emphasize the fact that such functions are neither too big (that is, they are \emph{small}), nor too little (because they are \emph{ample}), but sit in a limited intermediate region. \end{remark}

\end{definition}

\begin{remark}
The notation $\SGold(n)$ refers to the number of Semi-Goldilocks functions, which we relate to the number of chambers $\Weightsg$, and the notation $\Gold(n)$ refers to the number of Goldilocks functions, which we relate to the number of chambers in $\Weights$. The subscripts refer to whether the genus of the corresponding domain is positive. 
\end{remark}

We prove that the image of $\olPhi$ is exactly the set of Semi-Goldilocks linear threshold functions and that the image of $\oPhi$ is exactly the set of Goldilocks linear threshold functions. Before doing so, we state some necessary propositions.

\begin{proposition}\label{smallreal}
Every realization of a small threshold function is small.
\begin{proof}
If an LTF $f$ is small, then it satisfies $f(\hat{e}_i) = 0$ for all $i$. Any pair $(\vec{w}, \theta)$ which is not small has $w_i > \theta$ for some $i$, so $f(\hat{e}_i) = 1$ for that $i$, and thus cannot be a realization for $f$. 
\end{proof}
\end{proposition}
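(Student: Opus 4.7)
The plan is to characterize the smallness of $f$ intrinsically in terms of the values of $f$ on the standard basis vectors $\hat{e}_i$, thereby removing any dependence on a specific realization. First I would observe that if $(\vec{w}, \theta)$ is a small realization of $f$, then evaluating at $\hat{e}_i$ yields
\[
	f(\hat{e}_i) = \operatorname{sgn}(\vec{w} \cdot \hat{e}_i - \theta) = \operatorname{sgn}(w_i - \theta),
\]
which is $0$ because $w_i \le \theta$ and $\operatorname{sgn}$ takes value $0$ on nonpositive arguments. Hence smallness of $f$ implies that $f(\hat{e}_i) = 0$ for every $i \in [n]$.

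Next, I would show that this necessary condition is already enough to force every realization to be small. Given any realization $(\vec{w}', \theta')$ of $f$, if it were not small then $w'_i > \theta'$ for some index $i$, so $f(\hat{e}_i) = \operatorname{sgn}(w'_i - \theta') = 1$, contradicting the conclusion of the previous paragraph. Therefore every realization of $f$ must satisfy $w'_i \le \theta'$ for all $i$, i.e., is small.

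The argument is essentially immediate from the definitions; the only subtlety to keep track of is the convention on $\operatorname{sgn}(0)$, which (as stipulated after Definition \ref{def:ltf}) takes value $0$. This ensures that the non-strict inequality $w_i \le \theta$ in the definition of a small realization is compatible with the strict inequality $w_i' > \theta'$ used to derive the contradiction.
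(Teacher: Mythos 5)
Your argument is correct and is essentially the paper's own proof: you note that a small realization forces $f(\hat{e}_i)=0$ for all $i$, and that any non-small realization would give $f(\hat{e}_i)=1$ for some $i$, a contradiction. The extra remark about the $\operatorname{sgn}(0)$ convention is a nice clarification but does not change the substance.
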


\begin{proposition}\label{ampletocomb}
If $f$ is ample, then for any $\vec{x} \in \{0, 1\}^n$ with $f(\x) = 0$, we have $f(\nx) = 1$.
\begin{proof}
The proof proceeds by contrapositive. Let $\real$ be any realization for $f$. Assume that there exists a negation pair $\x, \nx$ such that $f(\x) = 0 = f(\nx)$. Then we have both $\w \cdot \x \le \theta$ and $\w \cdot \nx \le \theta$, so 
\[
	\sum_{i = 1}^n w_i = \w \cdot \x + \w \cdot \nx \le 2\theta.
\]
Thus an arbitrary realization $\real$ for $f$ is not ample, so $f$ is not ample.
\end{proof}
\end{proposition}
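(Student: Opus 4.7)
The plan is to proceed by contrapositive: assume there is some negation pair $\x, \nx \in \{0,1\}^n$ on which $f$ vanishes simultaneously, and deduce that no realization of $f$ can be ample. The key elementary observation is that $\x + \nx = (1,1,\dots,1)$, so for any weight vector $\w$ we have $\w \cdot \x + \w \cdot \nx = \sum_{i=1}^n w_i$. This identity is what links the hypothesis (statement about individual evaluations of $f$) to the conclusion (statement about the sum of all weights in a realization).

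Concretely, fix any realization $\real$ of $f$. Because $f(\x) = 0$ and $f(\nx) = 0$, the definition of $\operatorname{sgn}$ (which returns $0$ on nonpositive arguments) gives $\w \cdot \x - \theta \le 0$ and $\w \cdot \nx - \theta \le 0$. Adding these two inequalities and applying the identity above yields $\sum_{i=1}^n w_i \le 2\theta$, so the realization $\real$ fails the ampleness criterion. Since $\real$ was arbitrary, no realization of $f$ is ample, and therefore $f$ itself is not ample, completing the contrapositive.

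There is no real obstacle here; the only subtlety is remembering the sign convention that $\operatorname{sgn}(0) = 0$ (so that $f(\x)=0$ yields a weak rather than strict inequality), which is exactly what makes the summed inequality come out as $\le 2\theta$ and defeats ampleness (which requires strict $>$). Everything else is a one-line computation once the decomposition $\mathbf{1} = \x + \nx$ is spotted.
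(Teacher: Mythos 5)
Your proof is correct and follows the same contrapositive argument as the paper: fix an arbitrary realization, add the two inequalities $\w\cdot\x \le \theta$ and $\w\cdot\nx \le \theta$, and use the identity $\x + \nx = \mathbf{1}$ to conclude $\sum_i w_i \le 2\theta$. The remark about the $\operatorname{sgn}(0)=0$ convention is a nice clarification but does not change the approach.
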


\begin{proposition}[Amplification]\label{amplification}
Given any realization $(\vec{w}, \theta)$ of an ample linear threshold function $f$, there exists an ample realization $(\vec{w}, \theta')$ for $f$, called the \emph{amplification} of $(\vec{w}, \theta)$, with the same weight vector and satisfying $\theta' \le \theta$.
\begin{proof}
Let $(\vec{w}, \theta)$ be any realization of an ample $f$. If $\real$ is already ample, let $\theta' = \theta$ and the result is trivial. Thus we assume that $\real$ is not ample. 

Define the ``false set'' $F$ on $f$:
\[
	F := \{ \vec{x} \in \{0, 1\}^n \; | \; f(\vec{x}) = 0 \}.
\]
Let $\vec{x}_M$ be a vector in $F$ that maximizes $\vec{w} \cdot \vec{x}$ over $F$. Thus, any other $\x \in F$ satisfies $\w \cdot \x \le \w \cdot \xM \le \theta$. Since $\real$ is assumed not to be ample, we have the following inequality:
\[
	\w \cdot \xM + \w \cdot \nxM = \sum_{i = 1}^n w_i \le 2\theta.
\]
Define $\delta \ge 0$ to be the slack variable on this inequality: it is the nonnegative rational number such that 
\[
	\w \cdot \xM + \w \cdot \nxM = 2\theta - \delta.
\]
Since $\nxM$ is the negation of a vector in $F$, it itself must satisfy $f(\nxM) = 1$ by Proposition \ref{ampletocomb} and thus must satisfy $\w \cdot \nxM > \theta$. Thus we have from the above equality $\w \cdot \xM < \theta - \delta$. Let $\epsilon > 0$ be the slack in this equality, so that $\w \cdot \xM = \theta - \delta - \epsilon$. 

Let $\theta' = \theta - \delta - \epsilon$. We claim that $\areal$ is a realization for $f$. If $f(\x) = 0$, then $\x \in F$, so we have
\[
	\w \cdot \x \le \w \cdot \xM = \theta'
\]
and thus the new realization preserves the value of $\x$. Alternatively, if $f(\x) = 1$, then $\w \cdot \x > \theta > \theta'$, so $\operatorname{sgn}(\w \cdot \x - \theta')$ agrees with $f$ on all inputs and $\areal$ is a realization for $f$. Furthermore, we have
\[
	\sum_{i = 1}^n w_i = \w \cdot \xM + \w \cdot \nxM > \theta + \theta' > 2\theta'
\]
and thus $\areal$ is ample.
\end{proof}
\end{proposition}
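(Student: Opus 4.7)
The plan is to exhibit $\theta'$ explicitly as the maximum value of $\w \cdot \x$ over the false set $F := f^{-1}(0)$, which is essentially the smallest bias compatible with the given weight vector $\w$, and then to extract ampleness of $\areal$ from Proposition \ref{ampletocomb}.

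If $\real$ is already ample, I take $\theta' = \theta$ and there is nothing to check. Otherwise, I pick $\xM \in F$ maximizing $\w \cdot \x$ over $F$ (which exists because $\{0,1\}^n$ is finite) and set $\theta' := \w \cdot \xM$. To see that $\areal$ is still a realization of $f$, I check both cases. For any $\x \in F$, the choice of $\xM$ gives $\w \cdot \x \le \w \cdot \xM = \theta'$, so $\sgn(\w \cdot \x - \theta') = 0 = f(\x)$. For any $\x \in T$, since $\real$ is a realization we have $\w \cdot \x > \theta$, and since $\xM \in F$ we have $\theta \ge \w \cdot \xM = \theta'$, so $\w \cdot \x > \theta'$ and $\sgn(\w \cdot \x - \theta') = 1 = f(\x)$. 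The inequality $\theta \ge \theta'$ established in passing also gives the required bound on the new bias.

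For ampleness, the key input is Proposition \ref{ampletocomb}: since $f$ is ample and $\xM \in F$, we must have $f(\nxM) = 1$, and hence $\w \cdot \nxM > \theta$. Combined with $\w \cdot \xM = \theta'$, this gives
\[
\sum_{i=1}^{n} w_i \;=\; \w \cdot \xM + \w \cdot \nxM \;>\; \theta' + \theta \;\ge\; 2\theta',
\]
where the last inequality uses $\theta \ge \theta'$ from the previous paragraph. Thus $\areal$ is an ample realization with $\theta' \le \theta$, as required.

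The main obstacle I expect to encounter is spotting the right role of Proposition \ref{ampletocomb}: ampleness of the \emph{function} $f$ (as opposed to of the specific realization $\real$, which we are explicitly assuming fails to be ample) is exactly what places $\nxM$ in the true set and therefore yields the strict inequality $\w \cdot \nxM > \theta$ that propagates into the strict bound $\sum w_i > 2\theta'$. Without invoking this, the best one could hope for is a weak inequality, which is insufficient. Once this input is identified the rest is a short computation, so I do not anticipate any further obstacles.
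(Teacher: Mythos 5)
Your proof is correct and takes essentially the same approach as the paper: choose the largest value $\w\cdot\x$ attained on the false set as the new bias, then use Proposition \ref{ampletocomb} to show $\nxM$ lands in the true set and so forces $\sum w_i > \theta' + \theta \ge 2\theta'$. You simply set $\theta' := \w\cdot\xM$ directly, whereas the paper arrives at the same value through intermediate slack variables $\delta$ and $\epsilon$; your version is a bit more streamlined but not a different argument.
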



The critical lemma is the following.

\begin{lemma}\label{Goldrep}
A threshold function $f$ is Semi-Goldilocks if and only if it has a Semi-Goldilocks realization. Similarly, a threshold function $f$ is Goldilocks if and only if it has a Goldilocks realization. 

\begin{proof}
Clearly, a function with a Semi-Goldilocks realization is Semi-Goldilocks, but the converse is nontrivial. Assume that $f$ is Semi-Goldilocks, and let $(\vec{w}, \theta)$ be a positive realization of $f$. By Proposition \ref{smallreal}, $(\vec{w}, \theta)$ is a realization of a small threshold function, and is thus itself small. Thus $(\vec{w}, \theta)$ is Semi-Goldilocks.

Now, a function with a Golilocks realization is clearly Goldilocks, so we proceed to prove the converse. By the above, a Goldilocks function, which is Semi-Goldilocks, has a Semi-Goldilocks realization $(\w, \theta)$. If $(\w, \theta)$ is ample, then it is a Goldilocks realization of $f$. If $(\w, \theta)$ is not ample, let $(\w, \theta')$ be the corresponding amplification, which exists by Proposition \ref{amplification}. Note that $(\w, \theta')$ is a positive and ample realization of a small linear threshold function, so it is small as well. Thus $(\w, \theta')$ is Goldilocks. 
\end{proof}
\end{lemma}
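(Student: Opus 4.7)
The easy direction of each biconditional is immediate: a Semi-Goldilocks (respectively, Goldilocks) realization simultaneously witnesses all of the relevant defining properties, so its mere existence certifies that $f$ is Semi-Goldilocks (respectively, Goldilocks). The substance of the lemma is the converse, where I am given separate realizations that individually witness positivity, smallness, and (for Goldilocks) ampleness, and must produce a single realization satisfying all of the properties at once.

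For the Semi-Goldilocks direction, my plan is to start with any positive realization $(\vec w, \theta)$ of $f$ and argue that it is automatically small. The key observation is that smallness, though phrased as a property of a realization, is actually determined by the function: if $f$ admits \emph{any} small realization, then $f(\hat{e}_i) = 0$ for every standard basis vector $\hat{e}_i$, and Proposition \ref{smallreal} then forces $w_i \le \theta$ in \emph{every} realization, including the positive one. Hence the positive realization is itself Semi-Goldilocks, and no combining is actually required.

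For the Goldilocks direction, I would first apply the Semi-Goldilocks case to obtain a single positive, small realization $(\vec w, \theta)$. If this realization is already ample, then it is Goldilocks and we are done; otherwise, I would apply Proposition \ref{amplification} to produce $(\vec w, \theta')$ with $\theta' \le \theta$. This keeps the weight vector untouched, so positivity is inherited directly, and ampleness is gained by construction. Smallness of the new realization follows once again from Proposition \ref{smallreal}, since the underlying function $f$ is unchanged.

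The main thing to watch, and what I would flag as the potential obstacle, is that amplification can strictly decrease $\theta$, so the inequality $w_i \le \theta'$ needed for smallness at the new bias is a priori stronger than the smallness inequality $w_i \le \theta$ we started with. The reason this is not an actual problem is precisely that smallness is a function-level property forced by $f(\hat{e}_i) = 0$, and is therefore inherited by every realization regardless of how small $\theta'$ becomes; without Proposition \ref{smallreal} the interaction between amplification and smallness would be the genuine sticking point of the proof.
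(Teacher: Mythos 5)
Your proposal is correct and follows essentially the same argument as the paper: both use Proposition \ref{smallreal} to upgrade a positive realization to a Semi-Goldilocks one automatically, then apply the amplification of Proposition \ref{amplification} (which preserves the weight vector, hence positivity) and invoke Proposition \ref{smallreal} once more to recover smallness at the new bias. Your added remark pinpointing why the drop in $\theta$ does not threaten smallness is exactly the observation the paper implicitly relies on in the sentence ``\ldots is a positive and ample realization of a small linear threshold function, so it is small as well.''
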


\begin{theorem}\label{bijection}
A threshold function $f$ is in the image of $\oPhi$ if and only if it is Semi-Goldilocks. Similarly, a threshold function $f$ is in the image of $\olPhi$ if and only if it is Goldilocks. 
\begin{proof}
We remark that a threshold function is in the image of $\oPhi$ or $\olPhi$ if and only if it has a \emph{Semi-Goldilocks realization} or \emph{Goldilocks realization}, respectively. If a threshold function $f$ is in $\operatorname{Im} \oPhi$ (resp. $\operatorname{Im} \olPhi$), then there is some $\vec{w}$ such that $f = f_\vec{w}$, and $(\vec{w}, 1)$ is a Semi-Goldilocks (resp. Goldilocks) realization for $f$. Conversely, if $f$ has a Semi-Goldilocks (resp. Goldilocks) realization $(\vec{w}, \theta)$, then scaling by dividing $\theta$ through all the inequalities gives a normalized $\vec{w}$ which satisfies the properties to be in $\Weightsg$ (resp. $\Weights$). Indeed, if $\theta = 0$, then any small realization with $\theta = 0$ has $\vec{w} = 0$ and thus cannot be positive. Thus a threshold function is in $\operatorname{Im}\oPhi$ (resp. $\operatorname{Im} \olPhi$) if and only if it has a Semi-Goldilocks (resp. Goldilocks) realization, and the theorem follows from Lemma \ref{Goldrep}.
\end{proof}
\end{theorem}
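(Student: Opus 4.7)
The plan is to reduce the theorem to Lemma \ref{Goldrep} by establishing the intermediate claim that $f \in \operatorname{Im}\oPhi$ (resp. $\operatorname{Im}\olPhi$) if and only if $f$ admits a Semi-Goldilocks (resp. Goldilocks) realization. Once this equivalence is in place, Lemma \ref{Goldrep} immediately identifies these with the Semi-Goldilocks (resp. Goldilocks) threshold functions, completing the argument.

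For the forward direction, I would take $f = f_\w$ for some $\w \in \Weightsg$, and observe that $(\w, 1)$ is a realization of $f$ by construction. The defining inequalities $0 < w_i \leq 1$ of $\Weightsgen$ translate directly into positivity and smallness (with $\theta = 1$), so $(\w, 1)$ is a Semi-Goldilocks realization. In the $\olPhi$ case, the extra ampleness constraint $w_1 + \cdots + w_n > 2$ built into $\Weights$ (for $g = 0$) gives $\sum w_i > 2\theta$, producing a full Goldilocks realization.

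For the converse direction, suppose $(\w, \theta)$ is a Semi-Goldilocks realization of $f$. I would normalize by dividing through by $\theta$: set $\w' := \w/\theta$ and check that $(\w', 1)$ is again a realization of $f$ (scaling $(\w, \theta)$ by the same positive constant does not change $\operatorname{sgn}(\w \cdot \x - \theta)$). The key subtlety, which is the main obstacle, is verifying $\theta > 0$ so that this scaling is legitimate and preserves signs: positivity gives some $w_i > 0$, and smallness gives $w_i \leq \theta$, so $\theta > 0$ (the case $\theta = 0$ is ruled out since it would force $\w = 0$ by smallness, contradicting positivity). Then smallness gives $w'_i \leq 1$ and positivity gives $w'_i > 0$, so $\w' \in \Weightsg$ and $f = f_{\w'}$ lies in $\operatorname{Im}\oPhi$. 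For the Goldilocks case, ampleness rescales to $\sum w'_i > 2$, placing $\w'$ in $\Weights$ and thus $f$ in $\operatorname{Im}\olPhi$.

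Finally, I would invoke Lemma \ref{Goldrep} to pass from the existence of a Semi-Goldilocks (resp. Goldilocks) realization to $f$ being Semi-Goldilocks (resp. Goldilocks), concluding both bijective correspondences. The only genuinely delicate point in this plan is the sign of $\theta$ in the rescaling step; everything else is a straightforward translation between the defining inequalities of $\Weightsgen$ and the three Goldilocks criteria.
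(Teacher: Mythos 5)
Your proposal follows essentially the same route as the paper's proof: translate membership in $\operatorname{Im}\oPhi$ (resp. $\operatorname{Im}\olPhi$) into existence of a Semi-Goldilocks (resp. Goldilocks) realization via the canonical realization $(\vec{w},1)$ in one direction and rescaling by $\theta$ in the other, then close with Lemma \ref{Goldrep}. Your handling of the bias sign is, if anything, a touch cleaner than the paper's (you deduce $\theta>0$ directly from $0<w_i\le\theta$, whereas the paper only rules out $\theta=0$ explicitly), but the underlying idea is the same.
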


Thus $\oPhi$ and $\olPhi$ biject chambers of $\Weightsgen$ with subclasses of threshold functions. 

\begin{remark}
Note that $\oPhi$ (resp. $\olPhi$) biject chambers in the fine chamber decomposition with Semi-Goldilocks (resp. Goldilocks) functions. It is clear that an LTF with an associated weight is a necessary and sufficient invariant for identifying a chamber in $\Weightsg$ (resp. $\Weights$) given by $\sum_{i \in S} w_i = 1$, where $|S|$ is allowed to range from $1$ to $n$. In the definition of the fine chamber decomposition (Definition \ref{def:decomp}), the decomposition is the restriction of a subset of $\R^n$ to the convex polytope defined by
\[
	\{\w \in \Q^n:0 < w_i < 1, \sum w_i > 2 - 2g\}, 
\]
where we can assume inequalities are strict since we are considering chambers. A chamber in $\R^n$ is in the fine decomposition if and only if it has a nontrivial intersection with the polytope. 

If an LTF is Semi-Goldilocks (resp. Goldilocks), its corresponding chamber intersects the interior of the fine decomposition (a Semi-Goldilocks or Goldilocks realization lies within the polytope). Conversely, a chamber intersecting the fine decomposition has a Semi-Goldilocks (resp. Goldilocks) realization, so it must correspond to a Semi-Goldilocks (resp. Goldilocks) LTF. 
\end{remark}

We have the following result on the number of chambers in the fine decomposition of $\Weightsgen$. 

\begin{corollary}\label{cor:bijection}
The number of Semi-Goldilocks threshold functions is the number of chambers of $\Weightsg$ (that is, in $\Weightsgen$ for all $g > 0$). Similarly, the number of Goldilocks threshold functions is the number of chambers of $\Weights$. \end{corollary}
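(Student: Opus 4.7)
The plan is to deduce the corollary from the results already established in Section \ref{sec:ltfs} by combining the injectivity and surjectivity components separately. All of the essential work has been done; this proof is just bookkeeping.

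First, I would recall the earlier proposition showing that for $\w, \w' \in \Weightsgen$, the threshold functions $f_\w$ and $f_{\w'}$ agree if and only if $\w$ and $\w'$ lie in the same chamber. This gives the well-definedness of the quotient maps $\oPhi : \Weightsg/\!\sim \; \to \Threshn$ and $\olPhi : \Weights/\!\sim \; \to \Threshn$, as well as their injectivity. Crucially, the equivalence classes under $\sim$ are by construction precisely the chambers of $\Weightsg$ and $\Weights$, so the domains of $\oPhi$ and $\olPhi$ are exactly the sets of chambers whose cardinalities we want to count.

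Next, I would invoke Theorem \ref{bijection} to identify the images: $\operatorname{Im}\oPhi$ is exactly the set of Semi-Goldilocks LTFs, and $\operatorname{Im}\olPhi$ is exactly the set of Goldilocks LTFs. Combined with injectivity from the previous step, this means $\oPhi$ restricts to a bijection from chambers of $\Weightsg$ to Semi-Goldilocks LTFs, and $\olPhi$ restricts to a bijection from chambers of $\Weights$ to Goldilocks LTFs. Taking cardinalities yields the desired equalities, which are by definition $\SGold(n)$ and $\Gold(n)$.

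There is no real obstacle here; the corollary is a direct packaging of Theorem \ref{bijection} together with the injectivity statement for the quotient maps. The only point requiring a brief sentence of justification is why the equivalence classes of $\sim$ coincide with chambers, which follows immediately from the defining proposition of $\sim$. I would therefore write the proof in essentially one short paragraph, citing Theorem \ref{bijection} and the preceding injectivity corollary.
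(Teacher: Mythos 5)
Your proof is correct and follows exactly the route the paper intends: injectivity of $\oPhi$ and $\olPhi$ from the earlier proposition on chambers, plus the image characterization in Theorem \ref{bijection}, assembled into a bijection and then a cardinality statement. The paper does not spell out a separate proof of this corollary, treating it as the immediate consequence of Theorem \ref{bijection} that you describe.
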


In Sections \ref{comb criterion} and \ref{sec:chowgold}, we prove results on these criteria which are used throughout the rest of the paper. Using these results, we create an algorithm that enumerates chambers of $\Weightsgen$ for $n \leq 9$. Algorithm \ref{algorithm1} (see Appendix \ref{sec.alg}) was implemented in C++ and run for small $n$. Figures \ref{table.dgn} and \ref{table.d0n} (see Section \ref{sec.intro}) enumerate the number of chambers of $\Weightsgen$ for $n \leq 9$ as well as the quotient of the number of chambers of $\Weightsgen$ under the action of $S_n$.\\

Since the problem of deciding whether an arbitrary boolean function is Semi-Goldilocks or Goldilocks is co-NP-complete (see Section \ref{sec.hardness}), there likely do not exist elementary formulas for the number of chambers of $\Weightsgen$. We instead focus on finding asymptotic formulas for the growth of the number of such chambers.

\section{Properties of Semi-Goldilocks, Goldilocks, and linear threshold functions}
\label{sec:properties}

Here we demonstrate those properties of Semi-Goldilocks and Goldilocks functions necessary to enumerate chambers of $\Weightsgen$ by virtue of the correspondence to LTFs. In Sections \ref{chow parameters}-\ref{selfdual}, we introduce the remaining linear threshold function theory from the literature necessary for our purposes. In Sections \ref{comb criterion}-\ref{sec:chowgold}, we develop a theory of Semi-Goldilocks and Goldilocks functions in terms of linear threshold function theory. 

\subsection{Chow parameters}\label{chow parameters}


The study of linear threshold functions owes a great deal to a set of associated invariants introduced in an original form by Golomb \cite{golomb59}, but first refined and systematically studied by Chow \cite{chow61}. There is still some variance in how these parameters are defined; we adopt the following formulation.
\begin{definition}
Let $f$ be a threshold function and $T \subseteq \{0, 1\}^n$ the true set of $f$. The \emph{Chow parameters} are a pair $(m_f, \vec{a}_f)$ with $m_f \in \N, \vec{a}_f \in \N^n$, defined as:
\[
	m_f := |T| \quad \text{and} \quad \vec{a}_f := \sum_{\vec{x} \in T} \vec{x} + \sum_{\vec{x} \in F} \nx.
\]
Thus, $m_f$ is the number of ``true'' vectors in the domain, and the $i$-th component of $\vec{a}_f$ is the number of true vectors which are 1 in the $i$-th entry, plus the number of false vectors which are 0 in the $i$-th entry. If there is no confusion, we denote the Chow parameters as $(m, \vec{a})$ or $(a_0, \ldots, a_n)$.
\end{definition}

Chow proves several theorems on these parameters.

\begin{theorem}\cite[Theorem 1]{chow61}
Let $f, h$ be two $n$-variable boolean functions with the same Chow parameters $(m, \vec{a})$. Either both $f$ and $h$ are linear threshold functions and $f = h$, or neither are linear threshold functions.
\end{theorem}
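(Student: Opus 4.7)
The plan is to establish the following equivalent statement: if $f$ is a linear threshold function and $h$ is any boolean function with the same Chow parameters as $f$, then $h = f$. The theorem follows immediately from this, since it rules out exactly the dangerous case in which one of $f, h$ is an LTF and the other is a distinct boolean function sharing the same $(m, \vec{a})$.

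My first step would be to rewrite the vector Chow parameter $\vec{a}_f$ in terms of the ordinary first moment of $f$, to get rid of the $\overline{\vec{x}}$ appearing in the definition. Using $\overline{\vec{x}} = \vec{1} - \vec{x}$ together with the identity $\sum_{\vec{x} \in \{0,1\}^n} \vec{x} = 2^{n-1}\, \vec{1}$, a short computation gives
$$\vec{a}_f \;=\; 2 \sum_{\vec{x} \in T_f} \vec{x} \;+\; (2^{n-1} - m_f)\, \vec{1}.$$
Combined with $m_f = \sum_{\vec{x}} f(\vec{x})$, the hypothesis that $f$ and $h$ share Chow parameters then forces the two moment equalities
$$\sum_{\vec{x}} f(\vec{x}) \;=\; \sum_{\vec{x}} h(\vec{x}) \qquad\text{and}\qquad \sum_{\vec{x}} f(\vec{x})\, \vec{x} \;=\; \sum_{\vec{x}} h(\vec{x})\, \vec{x}.$$

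Next I would run the classical sign/cancellation argument. Fix a realization $(\vec{w}, \theta)$ of $f$ with $\vec{w} \cdot \vec{x} \neq \theta$ for every $\vec{x} \in \{0,1\}^n$, which is possible by the remark following Definition \ref{def:ltf}. Consider
$$S \;:=\; \sum_{\vec{x} \in \{0,1\}^n} \bigl(f(\vec{x}) - h(\vec{x})\bigr)\bigl(\vec{w} \cdot \vec{x} - \theta\bigr).$$
Expanding linearly and plugging in the two moment equalities above, every coefficient cancels and $S = 0$. On the other hand, each summand is individually nonnegative: when $f(\vec{x}) = h(\vec{x})$ the term is zero; when $f(\vec{x}) = 1$ and $h(\vec{x}) = 0$ both factors are strictly positive because $(\vec{w}, \theta)$ realizes $f$; and when $f(\vec{x}) = 0$ and $h(\vec{x}) = 1$ both factors are strictly negative. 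Since the sum of nonnegative terms is zero, every term must vanish, forcing $f(\vec{x}) = h(\vec{x})$ on every input.

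The main (and only mildly technical) obstacle is Step 1: translating the paper's convention for $\vec{a}_f$, which folds in $\overline{\vec{x}}$ on the false set, into the standard first-moment $\sum f(\vec{x})\, \vec{x}$ so that the linear functional $S$ actually cancels. Once that translation is made, the double-counting step is essentially forced, and it deserves emphasis that the sign argument works for an arbitrary boolean $h$, with no a priori assumption that $h$ is itself a linear threshold function -- which is precisely what gives Chow's theorem its strength.
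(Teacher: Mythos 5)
Your proof is correct, and it is essentially the classical argument that goes back to Chow's original paper; the paper you are working from simply cites \cite[Theorem 1]{chow61} and provides no proof of its own. Your translation of $\vec{a}_f$ into the first moment $2\sum_{\vec{x}\in T_f}\vec{x} + (2^{n-1}-m_f)\vec{1}$ is right, the linear functional $S$ does cancel term by term once the two moment equalities are in hand, and the sign analysis (using a realization with $\vec{w}\cdot\vec{x}\neq\theta$, as the paper's remark after Definition \ref{def:ltf} guarantees) correctly forces every summand to vanish. This is precisely the proof one finds in Chow's paper and in standard references on threshold logic, so you have reconstructed the intended argument rather than found an alternate route.
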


\begin{corollary}\cite[Theorem 2]{chow61}
\label{chowunique}
The Chow parameters uniquely characterize a linear threshold function.
\end{corollary}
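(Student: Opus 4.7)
The plan is to deduce the corollary essentially immediately from the preceding result of Chow. That theorem establishes a strong dichotomy: whenever two boolean functions $f$ and $h$ on $\{0,1\}^n$ share the same Chow parameters $(m, \vec{a})$, either both are linear threshold functions (in which case they must coincide), or neither is. The corollary simply restricts attention to the first branch of this dichotomy.

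Concretely, I would argue as follows. Suppose $f$ and $h$ are $n$-variable linear threshold functions with $(m_f, \vec{a}_f) = (m_h, \vec{a}_h)$. Applying the preceding theorem to the pair $(f,h)$ yields the stated dichotomy, and since by assumption both are LTFs, we are in the first case and $f=h$. Hence the map sending an LTF to its Chow parameters is injective, which is exactly what the corollary asserts.

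Since all the content has been packaged into the preceding theorem, there is no genuine obstacle in this deduction itself. If one wished instead to reprove the underlying theorem from scratch, the main difficulty would be to rule out two distinct LTFs $f,h$ with identical Chow parameters. The natural route runs through the asummability criterion (Theorem \ref{asum.thm}): from $m_f = m_h$ and $\vec{a}_f = \vec{a}_h$ one obtains, after rewriting the sums over $F_f, F_h$ using negations, a pair of integer identities $\sum_{\vec{x}} (\mathds{1}_{T_f}(\vec{x}) - \mathds{1}_{T_h}(\vec{x})) = 0$ and $\sum_{\vec{x}} (\mathds{1}_{T_f}(\vec{x}) - \mathds{1}_{T_h}(\vec{x})) \vec{x} = 0$. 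Splitting this signed combination into its positive and negative parts produces nonnegative coefficients $c_i$ that witness an asummability violation for whichever of $f$ or $h$ we apply Theorem \ref{asum.thm} to, contradicting its status as an LTF unless $T_f = T_h$. For the corollary as stated, however, the single invocation of the preceding theorem is all that is required.
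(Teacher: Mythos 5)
Your deduction is correct and is exactly the intended reading: the corollary is an immediate specialization of the preceding theorem (Chow's Theorem 1) to the branch where both functions are LTFs, and the paper simply cites Chow rather than reproving it. Your optional sketch of how to reprove the underlying theorem via the asummability criterion is in the right spirit but is not needed for the corollary as stated.
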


Furthermore, the Chow parameters have clear ramifications on realizations of the corresponding linear threshold function.
\begin{theorem}\cite[Lemma 1]{chow61}\label{chowclassify}
Let $f$ be a linear threshold function with Chow parameters $(m, \vec{a})$. The following properties hold:
\begin{itemize}
\item $a_i > 2^{n-1}$ if and only if all realizations of $f$ have $w_i > 0$,
\item $a_i < 2^{n-1}$ if and only if all realizations of $f$ have $w_i < 0$,
\item $a_i = 2^{n-1}$ if and only if there exists a realization of $f$ with $w_i = 0$,
\item $a_i > a_j$ if and only if $w_i > w_j$ for all realizations of $f$,
\item $a_i = a_j$ if and only if there exists a realization of $f$ with $w_i = w_j$.
\end{itemize}
We call those variables $x_i$ with $a_i = 2^{n-1}$ \emph{weak variables} (or $\epsilon$ variables) for $f$.
\end{theorem}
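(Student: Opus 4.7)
The plan is to split the five statements into two tiers: the forward implications (signs or orderings of $w_i$ in every realization force the corresponding inequality on the Chow parameters), which follow from a direct combinatorial identity; and the existence claims (some realization has $w_i = 0$ or $w_i = w_j$), which I would extract from Corollary~\ref{chowunique} via a symmetrization argument.

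First, I would unpack $a_i = |T_i^+| + |F_i^-|$ together with $|T_i^-| + |F_i^-| = 2^{n-1}$ to obtain the identity $a_i - 2^{n-1} = |T_i^+| - |T_i^-|$, then interpret this as a signed sum over Boolean-cube edges $(x, x + e_i)$ with $x_i = 0$. If a realization $(\w,\theta)$ has $w_i > 0$, then $\w \cdot (x + e_i) > \w \cdot x$, so $f(x) = 1$ forces $f(x + e_i) = 1$; each edge contributes nonnegatively, giving $a_i \geq 2^{n-1}$. Symmetric arguments handle $w_i < 0$ and $w_i = 0$. For the ordering bullets, the analogous identity
\[
	a_i - a_j = \sum_{x \in T}(x_i - x_j) - \sum_{x \in F}(x_i - x_j),
\]
organized around the coordinate-swap involution of indices $i$ and $j$, yields the forward directions in exactly the same way.

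Second, for the existence claims I would appeal to Chow's uniqueness. If $a_i = 2^{n-1}$, the bit-flipped function $f'(x) := f(x \oplus e_i)$ satisfies $a_i(f') = 2^n - a_i(f) = 2^{n-1}$ and agrees with $f$ on all other Chow parameters and on $m$, so Corollary~\ref{chowunique} forces $f = f'$, i.e., $f$ is independent of $x_i$. Given any realization $(\w,\theta)$, the reflected realization $(\w'',\theta'')$ with $w''_i = -w_i$, $w''_j = w_j$ otherwise, and $\theta'' = \theta - w_i$ also realizes $f' = f$; convexity of the realization set (immediate from the defining strict inequalities) then lets me average them to produce a realization with $w_i = 0$. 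The same template with the coordinate-swap $\sigma$ in place of the flip handles $a_i = a_j$: $f^\sigma$ has $a_i$ and $a_j$ interchanged and is otherwise Chow-equal to $f$, so $f = f^\sigma$, and averaging $(\w,\theta)$ with $(\w^\sigma,\theta)$ gives a realization with $w_i = w_j$.

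Finally, I would close each implication into an equivalence via a trichotomy. Since the realization set is convex, the image of $(\w,\theta) \mapsto w_i$ (resp.\ $w_i - w_j$) is an interval, hence strictly positive, strictly negative, or contains $0$. Combined with the forward directions, this shows for instance that if $a_i > 2^{n-1}$ then no realization can have $w_i = 0$ (else $a_i = 2^{n-1}$) and none can have $w_i < 0$ (else the edge count forces $a_i \leq 2^{n-1}$), yielding $w_i > 0$ in every realization. The most delicate piece will be the strictness in the backward direction of bullet~1: promoting $a_i \geq 2^{n-1}$ to strict inequality when every realization has $w_i > 0$. I would argue by contradiction, observing that if no $F \to T$ edge in coordinate $i$ exists then $f$ is literally independent of $x_i$, which by the existence argument above produces a realization with $w_i = 0$, contradicting the hypothesis.
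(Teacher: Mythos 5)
The paper does not give its own proof of this statement; it is cited verbatim as Lemma~1 of Chow's 1961 paper, so there is no internal argument to compare your attempt against.

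Your proof is correct and is essentially the standard argument from the literature. The key identity $a_i - 2^{n-1} = |T_i^+| - |T_i^-|$ and its interpretation as a signed sum over $i$-edges of the cube gives the one-sided implications $w_i > 0 \Rightarrow a_i \geq 2^{n-1}$ (and its analogues), and the pairing identity for $a_i - a_j$ under the $(i,j)$-swap handles the ordering bullets identically. Your use of the reflection $f'(x) = f(x \oplus e_i)$ together with Corollary~\ref{chowunique} to conclude $f = f'$ when $a_i = 2^{n-1}$ is clean; the verification that $(\w'', \theta - w_i)$ with $w''_i = -w_i$ realizes $f'$ is correct under the paper's $\operatorname{sgn}$ convention, and averaging inside the convex realization set then produces $w_i = 0$. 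The trichotomy step that upgrades all of the weak one-sided implications into the stated strict equivalences is exactly what is needed and is correctly handled: for instance, if $a_i > 2^{n-1}$, no realization can have $w_i = 0$ (contradicting $a_i = 2^{n-1}$) or $w_i < 0$ (contradicting $a_i \leq 2^{n-1}$), so every realization has $w_i > 0$; and conversely if every realization has $w_i > 0$ but $a_i = 2^{n-1}$, your reflection argument manufactures a realization with $w_i = 0$. One minor observation: in your final paragraph you do not actually need to invoke the Chow-uniqueness existence argument to promote $a_i \geq 2^{n-1}$ to strict --- once you observe $f$ is literally independent of $x_i$, simply setting $w_i = 0$ in the given realization and keeping $\theta$ fixed already gives the contradiction. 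But what you wrote is not wrong, only slightly indirect.
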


The dual behaves well with this set of invariants.

\begin{proposition}\label{chow.dual.prop}
If an LTF $f$ on $n$ variables has Chow parameters $(a_0, \ldots, a_n)$, then $f^d$ has Chow parameters $(2^n - a_0, a_1, \ldots, a_n)$.
\end{proposition}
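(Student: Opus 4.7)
The plan is to proceed directly from the definitions by writing down the true and false sets of $f^d$ in terms of those of $f$ and then reindexing the sums that define the Chow parameters.

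First I would unpack $f^d(\x) = \overline{f(\n\x)}$ at the level of sets. Since $f^d(\x) = 1$ exactly when $f(\n\x) = 0$, we get $T_{f^d} = \{\n\y : \y \in F_f\}$ and symmetrically $F_{f^d} = \{\n\y : \y \in T_f\}$. In particular $|T_{f^d}| = |F_f| = 2^n - |T_f|$, which gives the zeroth coordinate $m_{f^d} = 2^n - a_0$.

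Next, for $i \geq 1$, I would just compute using the definition
\[
    a_i(f^d) \;=\; \sum_{\x \in T_{f^d}} x_i \;+\; \sum_{\x \in F_{f^d}} \n{x_i},
\]
and substitute $\x = \n\y$ in each sum using the descriptions of $T_{f^d}$ and $F_{f^d}$ above. The first sum becomes $\sum_{\y \in F_f} \n{y_i}$ and the second becomes $\sum_{\y \in T_f} \n{\n{y_i}} = \sum_{\y \in T_f} y_i$. Adding them reproduces exactly $a_i(f)$, so the coordinates $a_1, \ldots, a_n$ are preserved.

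There is essentially no obstacle here: the argument is a direct bookkeeping exercise once one observes that duality swaps the roles of $T$ and $F$ while simultaneously negating each input bit, and these two operations cancel in the sums defining $a_1, \ldots, a_n$ (but not in the count $m$, which only sees $T$). The only mild subtlety is to remember that the Chow parameters are defined asymmetrically between the zeroth coordinate (a cardinality, which is not stable under complementing $T$) and the remaining coordinates (which weight true and false vectors symmetrically), which is precisely what makes only the zeroth coordinate change.
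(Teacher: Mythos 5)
Your proof is correct, and it is the natural direct computation from the definitions: duality sends $T_f \leftrightarrow F_f$ while negating inputs, so the negations cancel in $a_1, \ldots, a_n$ but $a_0 = |T|$ gets complemented to $2^n - a_0$. The paper states this proposition without proof, so there is nothing to compare against; your argument is exactly the bookkeeping one would expect and it is complete.
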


\subsection{Self-dualization and equivalence classes of threshold functions}\label{selfdual}
In order to make the space of linear threshold functions more tractable, equivalence classes of linear threshold functions are often studied. A more thorough exposition can be found in \cite{gruzling08}.

\begin{definition}
For any $\vec{u} \in \{0, 1\}^n$, we define the \emph{$\vec{u}$-complementation}, or \emph{complementation by $\vec{u}$}, as the operator $\gamma_\vec{u}$ on $\{0, 1\}^n$ which negates the variables with $u_i = 1$ and preserves the other variables.
\end{definition}

\begin{definition}
Consider the equivalence relation $\nequiv$ on linear threshold functions in which $f \nequiv h$ if there exists some $\vec{u} \in \{0, 1\}^n$ for which $f \circ \gamma_\vec{u} = h$. We define the \emph{N-type classes} of linear threshold functions as the equivalence classes of $\nequiv$. 
\end{definition}

The Chow parameters behave well under $\vu$-complementation.

\begin{proposition}\label{chow.ucomp.prop}
Given an LTF $f$ with Chow parameters $(m, \vec{a})$, the $\vu$-complementation of $f$ has Chow parameters $(m, \vec{b})$, where
\[
	b_i = \begin{cases}
    	a_i \quad \text{if $u_i = 0$}\\
        2^n - a_i \text{if $u_i = 1$}. 
    \end{cases}
\]
\end{proposition}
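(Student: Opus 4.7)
The plan is a direct bookkeeping argument based on unpacking the definition of the Chow parameters for $g := f \circ \gamma_{\vu}$ and exploiting the fact that $\gamma_{\vu}$ is an involution on $\{0,1\}^n$ which either preserves or swaps the $i$-th coordinate according to whether $u_i = 0$ or $u_i = 1$.

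First I would observe that since $\gamma_{\vu}$ is a bijection of $\{0,1\}^n$, the true and false sets of $g$ are $T_g = \gamma_{\vu}(T_f)$ and $F_g = \gamma_{\vu}(F_f)$. In particular $|T_g| = |T_f| = m$, which handles the first component of the Chow parameters. To compute the $i$-th entry, I would rewrite
\[
	b_i \;=\; \bigl|\{\y \in T_g : y_i = 1\}\bigr| \,+\, \bigl|\{\y \in F_g : y_i = 0\}\bigr|,
\]
which follows from the definition of $\vec{a}_g$ after noting that the $i$-th coordinate of $\overline{\y}$ is $0$ iff $y_i = 1$.

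Next I would split into two cases. If $u_i = 0$, then $\gamma_{\vu}$ fixes the $i$-th coordinate, so the bijection $\gamma_{\vu}: T_f \to T_g$ restricts to a bijection between $\{\x \in T_f : x_i = 1\}$ and $\{\y \in T_g : y_i = 1\}$, and similarly on the false side; summing gives $b_i = a_i$. If $u_i = 1$, then $\gamma_{\vu}$ flips the $i$-th coordinate, so
\[
	\bigl|\{\y \in T_g : y_i = 1\}\bigr| = \bigl|\{\x \in T_f : x_i = 0\}\bigr| = |T_f| - \bigl|\{\x \in T_f : x_i = 1\}\bigr|,
\]
and analogously
\[
	\bigl|\{\y \in F_g : y_i = 0\}\bigr| = |F_f| - \bigl|\{\x \in F_f : x_i = 0\}\bigr|.
\]
Adding these and recognizing the subtracted terms as the summands of $a_i$, together with $|T_f| + |F_f| = 2^n$, yields $b_i = 2^n - a_i$.

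There is no real obstacle here: the argument is essentially combinatorial bookkeeping, and the only care needed is to keep the indicator functions of ``$y_i = 1$ in $T_g$'' versus ``$x_i = 0$ in $T_f$'' straight when the coordinate is flipped. As a sanity check, the case $\vu = (1, 0, \ldots, 0)$ recovers Proposition \ref{chow.dual.prop}, since $\vu$-complementation by the all-ones vector gives $f^d$ up to the outer negation, and one can verify the signs match (or equivalently, the proposition is consistent with $\gamma_{\vu} \circ \gamma_{\vu} = \mathrm{id}$, giving an involution on the multi-set of Chow parameters).
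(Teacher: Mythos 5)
Your proof is correct, and the paper states Proposition \ref{chow.ucomp.prop} without proof, treating it as an immediate consequence of the definitions — your direct bookkeeping argument is exactly the intended verification. You correctly identify $T_g = \gamma_{\vu}(T_f)$ and $F_g = \gamma_{\vu}(F_f)$ (using that $\gamma_{\vu}$ is a bijection, indeed an involution), which gives $|T_g| = m$, and then you track how the indicator conditions $y_i = 1$ and $y_i = 0$ pull back under $\gamma_{\vu}$ according to whether $u_i = 0$ (coordinate preserved) or $u_i = 1$ (coordinate flipped), with the identity $|T_f| + |F_f| = 2^n$ closing the $u_i = 1$ case. One small slip in the sanity check at the end: to recover Proposition \ref{chow.dual.prop} you want $\vu = (1,1,\ldots,1)$ followed by an outer negation, not $\vu = (1,0,\ldots,0)$; your subsequent prose already says ``the all-ones vector,'' so this is evidently a typo, and the check does go through as you describe.
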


The equivalence relation can be extended to include permutations of the arguments as well.

\begin{definition}Consider the equivalence relation $\sim_{NP}$ such that $f \sim_{NP} h$ if there exists some permutation $\sigma$ of the arguments of $f$ and some $\vec{u}$ such that 
\[
	f \circ \gamma_{\vec{u}} \circ \sigma = h.
\]
The \emph{NP-type classes} of linear threshold functions are defined as the $\sim_{NP}$ equivalence classes of $f$.
\end{definition}

\begin{proposition}\label{chow.perm.prop}
Given an LTF $f$ with Chow parameters $(m, \vec{a})$ and a permutation $\sigma \in S_n$ the Chow parameters for $f\circ \sigma$ are given by $(m, \vec{b})$, where $b_i = a_{\sigma(i)}$. 
\end{proposition}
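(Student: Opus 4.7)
The plan is to unpack both the definition of the Chow parameters and the composition $f \circ \sigma$, and then reduce the claim to a change-of-variables in the defining sums. Setting $g := f \circ \sigma$, the permutation $\sigma \in S_n$ acts on $\{0,1\}^n$ as a coordinate bijection that sends $T_g$ bijectively onto $T_f$ (and $F_g$ bijectively onto $F_f$). This immediately gives $m_g = |T_g| = |T_f| = m$, handling the first Chow parameter.

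For the vector part, I would write the $i$-th coordinate directly from the definition as
\[
b_i = \sum_{\vec{x} \in T_g} x_i + \sum_{\vec{x} \in F_g} (1 - x_i),
\]
and then substitute $\vec{y} = \sigma(\vec{x})$. Under the appropriate convention for the coordinate action of $\sigma$, the single coordinate $x_i$ transforms into $y_{\sigma(i)}$, and both sums reindex as sums over $T_f$ and $F_f$ respectively. Comparing the result with the definition of $\vec{a}_f$ yields $b_i = a_{\sigma(i)}$, as required.

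The proof is really a short computation; the main ``obstacle'' is purely bookkeeping, namely fixing the convention for how $\sigma$ acts on $\{0,1\}^n$ (either $\sigma(\vec{x})_i = x_{\sigma(i)}$ or $\sigma(\vec{x})_i = x_{\sigma^{-1}(i)}$) so that the index $\sigma(i)$ in the statement comes out on the nose, rather than $\sigma^{-1}(i)$. Structurally, the argument is a direct analogue of the proof of Proposition \ref{chow.ucomp.prop} for $\vu$-complementation, with the involution $\gamma_{\vu}$ replaced by the coordinate permutation induced by $\sigma$.
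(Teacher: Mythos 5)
The paper actually states Proposition~\ref{chow.perm.prop} without proof, so there is nothing to compare your argument against; your proposal supplies the missing verification. Your computation is the natural one and is correct: the coordinate permutation induced by $\sigma$ is a bijection of $\{0,1\}^n$ carrying $T_{f\circ\sigma}$ onto $T_f$ and $F_{f\circ\sigma}$ onto $F_f$, which preserves $m$, and the change of variables $\vec{y}=\sigma(\vec{x})$ turns the defining sum for $b_i$ into the defining sum for the $\sigma(i)$-th coordinate of $\vec{a}$ (or the $\sigma^{-1}(i)$-th, depending on whether one reads $(f\circ\sigma)(\vec{x})$ as $f(x_{\sigma(1)},\ldots,x_{\sigma(n)})$ or with $\sigma^{-1}$ in the indices). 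You are right to flag the $\sigma$ versus $\sigma^{-1}$ issue: the paper never fixes this convention, and the two readings differ exactly by that swap. The ambiguity is harmless for every downstream use: Corollary~\ref{chow.perm.fixed.prop} asks only whether a permutation fixes $f$, a condition symmetric under $\sigma\leftrightarrow\sigma^{-1}$, and the remaining appeals to the proposition need only that the Chow parameters of $f\circ\sigma$ are a rearrangement of those of $f$. So your proof is complete; the caveat you raise is a genuine imprecision in the paper's statement rather than a gap in your argument.
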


\begin{corollary}\label{chow.perm.fixed.prop}
A permutation of arguments fixes an LTF $f$ if and only if it fixes the Chow parameters for $f$.
\begin{proof}
Let $f$ have Chow parameters $(m, \vec{a})$ and fix a permutation $\sigma$. Then by Proposition \ref{chow.perm.prop}, the Chow parameters of $f \circ \sigma$ are given by $(m, a_{\sigma(1)}, \ldots, a_{\sigma(n)})$. Since the Chow parameters identify the LTF, $f \circ \sigma = f$ if and only if $a_i = a_{\sigma(i)}$ for all $i$.
\end{proof}
\end{corollary}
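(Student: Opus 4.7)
The plan is to combine Proposition \ref{chow.perm.prop} with the uniqueness statement in Corollary \ref{chowunique}. The key observation is that both the LTF $f \circ \sigma$ and the ``permuted" Chow parameters of $f$ can be computed explicitly, and the correspondence between LTFs and Chow parameters is bijective.

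First, I would note that $f \circ \sigma$ is itself an LTF: if $(\vec{w}, \theta)$ realizes $f$, then $(\sigma^{-1}\vec{w}, \theta)$ realizes $f \circ \sigma$, where $\sigma^{-1}\vec{w}$ denotes the vector with $i$-th coordinate $w_{\sigma(i)}$. Next, by Proposition \ref{chow.perm.prop}, if $f$ has Chow parameters $(m, a_1, \ldots, a_n)$, then $f \circ \sigma$ has Chow parameters $(m, a_{\sigma(1)}, \ldots, a_{\sigma(n)})$.

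With both $f$ and $f \circ \sigma$ in hand as LTFs with known Chow parameters, Corollary \ref{chowunique} gives $f \circ \sigma = f$ if and only if $(m, a_{\sigma(1)}, \ldots, a_{\sigma(n)}) = (m, a_1, \ldots, a_n)$, i.e.\ $a_i = a_{\sigma(i)}$ for every $i$. This last condition is precisely the statement that $\sigma$ fixes the Chow parameters of $f$, completing both directions of the equivalence.

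There is essentially no hard step here: the result is a direct consequence of ``Chow parameters classify LTFs" plus ``permuting arguments permutes the entries of $\vec{a}$." The only thing to be careful about is matching conventions for whether $\sigma$ acts on indices or on coordinates, so that ``$\sigma$ fixes the Chow parameters" in the statement of the corollary matches the equation $a_i = a_{\sigma(i)}$ coming from Proposition \ref{chow.perm.prop}; but both natural conventions yield the same set of permutations in the stabilizer, so the statement is robust.
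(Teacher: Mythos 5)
Your proof is correct and takes the same approach as the paper: apply Proposition \ref{chow.perm.prop} to compute the Chow parameters of $f \circ \sigma$, then invoke the uniqueness of Chow parameters (Corollary \ref{chowunique}) to conclude. The small extra step you include — checking that $f\circ\sigma$ is an LTF by exhibiting a realization — is a worthwhile detail that the paper leaves implicit.
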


There is a much stronger notion of equivalence on LTFs, however, motivated by the self-dualization construction.

\begin{definition}\label{sd.class.def}
Given a boolean function $f$ on $n$ variables, define the \emph{self-dualization} of $f$ as the unique boolean function on $n+1$ variables such that
\[
	f^\sd(x_0, \ldots, x_n) := \nx_0 f(x_1, \ldots, x_n) + \x_0 f^d(x_1,\ldots, x_n).
\]
This process can be reversed: given a self-dual boolean $\mathscr{F}$ on $n+1$ variables, the \emph{anti-self-dualization} $f$ of $\mathscr{F}$ is given by
\[
	f(x_1, \ldots, x_n) := \mathscr{F}(0, x_1, \ldots, x_n).
\]
\end{definition}

The self-dual is well behaved with Chow parameters.
\begin{proposition}\label{chow.sdual.prop}
If an LTF $f$ on $n$ variables has Chow parameters $(m, a_1, \ldots, a_n)$, then $f^\sd$ has Chow parameters $(2^{n}, 2^{n+1} - 2m, 2a_1, \ldots, 2a_n)$.
\end{proposition}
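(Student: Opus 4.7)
The plan is to compute each Chow parameter of $f^\sd$ directly from the definition of self-dualization by splitting the domain $\{0,1\}^{n+1}$ according to the value of the new coordinate $x_0$. On the slice $x_0 = 0$, $f^\sd$ agrees with $f$, and on the slice $x_0 = 1$, $f^\sd$ agrees with $f^d$, so every count reduces to cardinalities of the true and false sets of $f$. Throughout I will use the bijection $\vec x \leftrightarrow \nx$ on $\{0,1\}^n$ to convert statements about $T_{f^d}$ into statements about $F_f$ and vice versa, together with the identity $f^d(\vec x) = \overline{f(\nx)}$.

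First I would verify that $m_{f^\sd} = 2^n$. The $x_0 = 0$ slice contributes $|T_f| = m$ true vectors, and the $x_0 = 1$ slice contributes $|T_{f^d}| = |F_f| = 2^n - m$ true vectors, so the totals add to $2^n$. Next, for the new coordinate $x_0$, I compute
\[
b_0 = |\{\vec x \in T_{f^\sd} : x_0 = 1\}| + |\{\vec x \in F_{f^\sd} : x_0 = 0\}|.
\]
The first count is $2^n - m$ by the previous paragraph, and the second count is $|F_f| = 2^n - m$ since $F_{f^\sd}$ on the $x_0 = 0$ slice is exactly $F_f$. Hence $b_0 = 2^{n+1} - 2m$.

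For each $i \ge 1$, let $a_i^+ := |\{\vec x \in T_f : x_i = 1\}|$ and $a_i^- := |\{\vec x \in F_f : x_i = 0\}|$, so by definition $a_i = a_i^+ + a_i^-$. The contribution to $b_i$ from true vectors of $f^\sd$ with $x_i = 1$ is $a_i^+$ from the $x_0 = 0$ slice and, from the $x_0 = 1$ slice, the number of $\vec x$ with $f^d(\vec x) = 1$ and $x_i = 1$; the substitution $\vec y = \nx$ turns this into the count of $\vec y \in F_f$ with $y_i = 0$, namely $a_i^-$. Symmetrically, the contribution from false vectors of $f^\sd$ with $x_i = 0$ is $a_i^-$ from the $x_0 = 0$ slice and $a_i^+$ from the $x_0 = 1$ slice (again by passing through $\vec y = \nx$). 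Summing gives $b_i = 2(a_i^+ + a_i^-) = 2a_i$, as claimed.

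The argument is essentially bookkeeping with two symmetric halves, so I do not anticipate a genuine obstacle; the only point worth being careful about is consistently applying the bijection $\vec x \leftrightarrow \nx$ to translate between $f^d$ counts on the $x_0 = 1$ slice and $f$ counts on its image.
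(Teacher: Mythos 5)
Your proof is correct. The paper states Proposition \ref{chow.sdual.prop} without supplying a proof (it is quoted as a standard Chow-parameter fact), so there is no in-paper argument to compare against; your slice-by-slice computation -- identifying the $x_0=0$ slice of $f^\sd$ with $f$ and the $x_0=1$ slice with $f^d$, then translating $f^d$-counts into $f$-counts through the bijection $\x \mapsto \nx$ -- is the natural direct verification from the definitions and it checks out.
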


\begin{definition}
Two boolean functions $f, h$ are said to be in the same \emph{SD-type class}, or \emph{SD class}, if there exists any sequence of permutations, negations, self-dualizations, and anti-self-dualizations by which $f$ can be transformed into $h$.
\end{definition}

The merit of this classification is justified by the following theorems.

\begin{theorem}\cite[Theorem 2]{Goto62}
If one element of an SD class is a linear threshold function, every element of the class is a linear threshold function.
\end{theorem}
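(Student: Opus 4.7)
The plan is to show that each of the four generating operations of the SD equivalence relation, namely permutation of arguments, $\vu$-complementation, self-dualization, and anti-self-dualization, individually preserves the property of being a linear threshold function. Since every SD-equivalence is a finite composition of such operations, the result then follows by induction on the length of such a sequence.

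The first two cases are essentially cosmetic. If $f$ has realization $(\w, \theta)$ and $\sigma \in S_n$, then $f \circ \sigma$ is realized by the permuted weight vector $\w'$ with $w'_i = w_{\sigma(i)}$, with the same bias $\theta$. For $\vu$-complementation, substituting $x_i \mapsto 1-x_i$ for each $i$ with $u_i=1$ yields a realization with negated weights on those coordinates and bias $\theta' = \theta - \sum_{i : u_i = 1} w_i$; this preserves the LTF property since arbitrary real weights are allowed.

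The genuinely substantive step is self-dualization. Given an LTF $f$ with realization $(\w,\theta)$ chosen (as we may, by \cite[Section 3.2.1]{anthonyThres}) so that $\w\cdot\x \neq \theta$ for every $\x \in \{0,1\}^n$, I first observe that the dual $f^d$ is realized by $(\w, \sum_i w_i - \theta)$: indeed $f^d(\x) = 1$ iff $f(\nx) = 0$ iff $\w \cdot \nx < \theta$ iff $\w\cdot\x > \sum_i w_i - \theta$. Then I propose the $(n+1)$-variate realization
\[
    (W_0, W_1, \ldots, W_n; \Theta) := \bigl(2\theta - \textstyle\sum_{i=1}^n w_i,\ w_1, \ldots, w_n;\ \theta\bigr)
\]
for $f^\sd$. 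Checking the two branches of Definition \ref{sd.class.def}: when $x_0 = 0$, the inequality $\vec W \cdot (0, \x) > \Theta$ reduces to $\w\cdot\x > \theta$, which agrees with $f(\x)$; when $x_0 = 1$, it reduces to $\w\cdot\x > \Theta - W_0 = \sum_i w_i - \theta$, which agrees with $f^d(\x)$ by the preceding observation. Hence $f^\sd$ is an LTF. The main obstacle here is really just the bookkeeping with the $\sgn(0)$ boundary, which is handled by the strict-inequality choice of realization.

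Finally, anti-self-dualization is immediate: if $\mathscr{F}$ is a self-dual LTF on $n+1$ variables realized by $(W_0, W_1, \ldots, W_n; \Theta)$, then $f(x_1,\ldots,x_n) := \mathscr{F}(0, x_1, \ldots, x_n)$ is realized simply by $((W_1, \ldots, W_n); \Theta)$, so $f \in \Threshn$. Combining the four cases, starting from any LTF in a given SD-class we can transform it into any other member by a finite sequence of LTF-preserving operations, so the entire class consists of LTFs, as claimed.
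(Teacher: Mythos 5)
The paper does not prove this statement itself; it is cited from \cite{Goto62} without an accompanying argument, so there is no ``paper's proof'' to compare against. Your self-contained proof is correct: you show that each of the four SD-generating operations preserves linear separability by explicitly constructing the transformed realization, and the realization $(2\theta - \sum_i w_i, w_1,\ldots,w_n;\theta)$ you propose for $f^{\sd}$ checks out on both branches of the definition, with the $\sgn(0)$ boundary correctly neutralized by taking a realization with $\w\cdot\x\neq\theta$ throughout. This is the natural argument, and I would expect it to coincide in substance with Goto's original.
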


\begin{theorem}\cite[Theorem 1]{Goto62}
\label{sd.bijection}
The self-dualization map is bijective: the number of linear threshold functions on $n$ variables is exactly the number of self-dual linear threshold functions on $n+1$ variables.
\end{theorem}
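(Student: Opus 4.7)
The plan is to construct an explicit two-sided inverse to the self-dualization map $f \mapsto f^{\sd}$ from $n$-variable linear threshold functions to self-dual $(n+1)$-variable linear threshold functions; the natural candidate is the anti-self-dualization $\mathscr{F} \mapsto \mathscr{F}(0,\,\cdot\,)$ of Definition~\ref{sd.class.def}. Once I verify that both maps are well-defined (i.e., each preserves the LTF property) and mutually inverse, bijectivity is immediate.

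First I would verify that $f^{\sd}$ is a self-dual LTF whenever $f$ is. Self-duality follows directly by a case analysis on $x_0 \in \{0,1\}$, using $(f^d)^d = f$. For the LTF property, I would start from a realization $(\w,\theta)$ of $f$ chosen so that $\w \cdot \x \neq \theta$ for every $\x \in \{0,1\}^n$, which is always possible by \cite[Section~3.2.1]{anthonyThres}. Writing $S := \sum_{i=1}^n w_i$, the identity $1 - \sgn(y) = \sgn(-y)$ (valid away from $y = 0$) gives
\[
f^d(\x) \;=\; 1 - \sgn\bigl(S - \w\cdot\x - \theta\bigr) \;=\; \sgn\bigl(\w\cdot\x - (S - \theta)\bigr),
\]
so $(\w,\, S - \theta)$ realizes $f^d$. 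The augmented pair $\bigl((2\theta - S,\, w_1,\dots,w_n),\, \theta\bigr)$ then realizes $f^{\sd}$: its weighted inner product minus $\theta$ equals $\w\cdot\x - \theta$ on the slice $x_0 = 0$ and $\w\cdot\x - (S - \theta)$ on the slice $x_0 = 1$, matching $f$ and $f^d$ respectively.

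Next I would observe that anti-self-dualization lands back among LTFs: given any realization $\bigl((u_0,u_1,\dots,u_n),\,\eta\bigr)$ of a self-dual LTF $\mathscr{F}$, the truncated pair $\bigl((u_1,\dots,u_n),\,\eta\bigr)$ realizes $f(\x) := \mathscr{F}(0,\x)$. To see the two maps are mutually inverse, note that $f^{\sd}(0,\x) = f(\x)$ is immediate from the definition, and conversely if $\mathscr{F}$ is self-dual and $f := \mathscr{F}(0,\,\cdot\,)$, then self-duality gives $f^d(\x) = \overline{\mathscr{F}(0,\overline{\x})} = \mathscr{F}(1,\x)$, so $f^{\sd}$ agrees with $\mathscr{F}$ on both slices $x_0 \in \{0,1\}$, hence everywhere.

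The only non-routine step is producing the realization of $f^{\sd}$; the main subtlety is that a carelessly chosen realization of $f$ might satisfy $\w\cdot\x = \theta$ for some $\x$, breaking the pointwise identity $1 - \sgn(y) = \sgn(-y)$ used to pass from a realization of $f$ to one of $f^d$. Invoking the standard perturbation result to eliminate this degeneracy resolves the obstacle, after which the remainder is routine algebra.
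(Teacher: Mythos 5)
The paper states this as a citation to Goto and gives no proof, so there is nothing internal to compare against. Your proof is correct: constructing the explicit two-sided inverse (anti-self-dualization $\mathscr{F}\mapsto\mathscr{F}(0,\cdot)$), verifying both maps preserve the LTF property by exhibiting realizations, and checking they compose to the identity in each direction is exactly the standard argument for this classical result, and you correctly identified and handled the one subtlety (the need for a realization with $\w\cdot\x\neq\theta$ everywhere so that $1-\sgn(y)=\sgn(-y)$ applies pointwise; non-degeneracy then passes to the threshold $S-\theta$ since $\w\cdot\x=S-\theta$ iff $\w\cdot\nx=\theta$).
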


\subsection{A combinatorial criterion for Goldilocks}\label{comb criterion}

We now introduce a set of equivalent definitions for the Goldilocks criteria with a more combinatorial flavor. We do not separately discuss Semi-Goldilocks functions here since they satisfy a subset of the same criteria.

\begin{theorem}\label{equivGold}
The criteria of Goldilocks are equivalent to the following:
\emph{
\begin{enumerate}[label=(\roman*')]
\item (Combinatorial Positivity) For all $i \in \nrange$ and all $x_j \in \{0, 1\}$,
\[
	f(x_1, \ldots, x_{i-1}, 1, x_{i+1}, \ldots, x_{n}) \ge f(x_1, \ldots, x_{i-1}, 0, x_{i+1}, \ldots, x_n).
\]
\item (Combinatorial Smallness) For all $i$, 
	$f(\hat{e}_i) = 0$, 
where $\hat{e}_i$ is the $i$-th standard basis vector interpreted as a boolean vector. 
\item (Combinatorial Ampleness)
If $f(\vec{x}) = 0$, then $f(\nx) = 1$.
\end{enumerate}
}

\begin{proof}
We prove the equivalence of each pair of criteria independently.

$\mathbf{(i)} \iff \mathbf{(i')}$: If there exists a nonnegative realization, $(\vec{w}, \theta)$ for $f$, then we have
\begin{align*}
	f(x_1, \ldots, 1, \ldots, x_n) &= \operatorname{sgn}\left( \sum_{j =0, \; j \neq i}^n w_j x_j + w_i - \theta \right)\\
    f(x_1, \ldots, 0, \ldots, x_n) &= \operatorname{sgn}\left( \sum_{j = 0, \; j \neq i}^n w_j x_j - \theta \right).
\end{align*}
Since the argument of the top line is greater than or equal to the argument of the bottom line, we have the desired inequality. 

Conversely, assume that a linear threshold function $f$ is increasing in each argument. Every LTF has some realization (not necessarily positive). Let $(\vec{w}, \theta)$ be a realization with at least one $i$ such that $w_i <0$, and let $\vec{w}'$ be the weight vector which agrees with $\vec{w}$ on all arguments except that $w_i' = 0$. 

We claim that $(\vec{w}' , \theta)$ is also a realization for $f$. Let $f'$ be realized by $(\w', \theta)$, and assume to the contrary that $f' \neq f$. Since for all $i$, we have that $\w'_i \geq \w_i$, we have $f' \geq f$, where $f' \geq f$ means that for all $\x \in  \{0, 1\}^n$, $f'(\x) \geq f(\x)$. Consider some $\vec{x}$ where $f'(\vec{x}) = 1$ and $f(\vec{x}) = 0$. Then $x_i = 1$, because otherwise the functions would equate. Since $w_i' = 0$, \[f'(\vec{x} - \hat{e}_i) = f'(\vec{x}).\] However, since the $i$th component is the only component in which $f$ and $f'$ differ, \[f'(\vec{x} - \hat{e}_i) = f(\vec{x} - \hat{e}_i).\] Substitution thus yields $f(\vec{x}-\hat{e}_i) = 1$ while $f(\vec{x}) = 0$. This violates the positivity of $f$. 

Thus there exists a realization $(\vec{w}, \theta)$ for $f$ with $w_i \ge 0$ for all $i$. This in turn implies the existence of a realization with $w_i > 0$ for all $i$: set the zero entries to a sufficiently small positive $\epsilon$. These new entries do not affect the function value for any $\vec{x} \in \{0, 1\}^n$.\\

\noindent $\mathbf{(ii} \iff \mathbf{(ii')}$: For the first direction, assume $f$ has a small realization $\real$. Then by definition, for all $i$, 
\[
	f(\hat{e}_i) = \operatorname{sgn}(w_i - \theta) = 0. 
\]

Conversely, assume that all realizations of $f$ have $w_i > \theta$ for some $i$. Then we have $f(\hat{e}_i) = \operatorname{sgn}(w_i -\theta) = 1$, for that $i$, so $f$ does not satisfy the combinatorial criterion

\noindent $\mathbf{(iii)} \iff \mathbf{(iii')}$: The first direction is proved by contrapositive: if there exists a complementary pair $\x, \nx$ which satisfy $f(\x) = 0 = f(\nx)$, then any realization $\real$ satisfies \[
	\x \cdot \w - \theta \le 0 \quad \text{and} \quad \nx \cdot \w - \theta \le 0.
\]
These are complementary sums of the $w_i$, so the sum of the two inequalities gives $\sum_{i = 1}^n w_i - 2\theta \le 0$. Thus $f$ cannot have an ample realization and is therefore not ample.

For the opposite direction, let $f$ satisfy the combinatorial statement of ampleness, and let $(\w, \theta)$ be a realization for $f$. Repeating the ``amplification" process from Proposition \ref{amplification} creates a realization $(\w, \theta')$ for $f$ which, by the combinatorial statement, must be ample.

.
\end{proof}
\end{theorem}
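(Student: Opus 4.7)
The plan is to prove each pair of equivalences separately, handling the three properties independently since the criteria are defined additively.

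For \textbf{(i) $\iff$ (i')}, the forward direction is immediate: given a realization $(\vec{w}, \theta)$ with all $w_i > 0$, flipping coordinate $x_i$ from $0$ to $1$ increases $\vec{w} \cdot \vec{x}$ by $w_i > 0$, so $\operatorname{sgn}(\vec{w} \cdot \vec{x} - \theta)$ cannot decrease. For the reverse direction, I would start with any realization $(\vec{w},\theta)$ of an $f$ satisfying (i'), and argue that negative weights can be replaced one at a time by zero without changing $f$. Specifically, if $w_i < 0$, form $\vec{w}'$ agreeing with $\vec{w}$ outside of the $i$-th coordinate and with $w_i' = 0$, and suppose the resulting threshold function $f'$ differs from $f$ at some $\vec{x}$ with $x_i = 1$. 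Since $w_i' = 0$, the output $f'(\vec{x})$ equals $f'(\vec{x} - \hat{e}_i) = f(\vec{x} - \hat{e}_i)$, so the two differ on a comparable pair violating monotonicity. After zeroing out all negative weights, replace each $w_i = 0$ by a sufficiently small positive $\epsilon$; since function values on $\{0,1\}^n$ only change when crossing the threshold, a small enough $\epsilon$ preserves $f$.

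For \textbf{(ii) $\iff$ (ii')}, the forward direction uses the sign convention directly: if $w_i \le \theta$, then $f(\hat{e}_i) = \operatorname{sgn}(w_i - \theta) = 0$. For the reverse, suppose some realization $(\vec{w}, \theta)$ had $w_i > \theta$. Then this realization evaluates $\hat{e}_i$ to $1$, contradicting (ii'); hence every realization is small, establishing (ii). (This is essentially Proposition \ref{smallreal} restated.)

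For \textbf{(iii) $\iff$ (iii')}, the forward direction is exactly Proposition \ref{ampletocomb}, already proved. For the reverse direction, I would recycle the amplification construction of Proposition \ref{amplification}: given any realization $(\vec{w},\theta)$ of $f$, pick $\vec{x}_M \in F$ maximizing $\vec{w} \cdot \vec{x}$ over $F$. Combinatorial ampleness (iii') guarantees $f(\overline{\vec{x}}_M) = 1$, which is precisely the ingredient the amplification proof uses to produce a strictly smaller $\theta'$ with $(\vec{w}, \theta')$ still realizing $f$ and satisfying $\sum w_i > 2\theta'$. Thus $f$ is ample.

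The main obstacle is the (i') $\Rightarrow$ (i) direction, since one must carefully argue that zeroing negative weights does not perturb any function value --- this requires the pairwise comparison across an $\hat{e}_i$-flip that invokes monotonicity. Once that replacement is justified, the remaining technicalities (the $\epsilon$ perturbation for strict positivity, and reusing amplification in case (iii)) are routine.
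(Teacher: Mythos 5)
Your plan follows the paper's proof almost step for step: the forward directions of all three equivalences, the ``zero out a negative weight and compare across the $\hat{e}_i$-flip'' argument for (i')~$\Rightarrow$~(i), the perturb-zeros-to-$\epsilon$ step, the reuse of Proposition~\ref{smallreal} for (ii), and the observation that the amplification argument of Proposition~\ref{amplification} only needs $f(\overline{\vec{x}}_M)=1$, which combinatorial ampleness supplies directly.

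There is one small but genuine gap in the (i')~$\Rightarrow$~(i) argument. You derive $f(\vec{x}) \neq f(\vec{x} - \hat{e}_i)$ and then assert that this ``violates monotonicity.'' But a mere disagreement across a comparable pair is not a violation: $f(\vec{x})=1$, $f(\vec{x}-\hat{e}_i)=0$ is perfectly consistent with (i'). You need the disagreement to go the wrong way, namely $f(\vec{x})=0$ and $f(\vec{x}-\hat{e}_i)=1$. That requires the additional observation --- which the paper makes explicitly --- that $w_i' = 0 > w_i$ and $x_i = 1$ give $\vec{w}'\cdot\vec{x} > \vec{w}\cdot\vec{x}$, hence $f' \ge f$ pointwise, so any disagreement at $\vec{x}$ must be $f'(\vec{x})=1$, $f(\vec{x})=0$. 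With that one line inserted, your (i')~$\Rightarrow$~(i) argument is complete and identical to the paper's. The rest of your proposal is correct as written.
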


\begin{remark}Given the above theorem, we will now omit the term ``combinatorial" when discussing the properties of positivity, ampleness, and smallness. \end{remark}

\begin{remark}
This is naturally a much more testable definition of Semi-Goldilocks and Goldilocks: given a linear threshold function, it suffices to check positivity, that the basis vectors give $0$, and that no negation pairs are both false (for Goldilocks). 
\end{remark}

\subsection{Chow parameters, duality and the Goldilocks criteria}\label{sec:chowgold}

Throughout this section, let $f$ be an LTF with Chow parameters $(m, \vec{a})$.

Chow realized that positivity is easily visible in the parameters. As a corollary of Theorem \ref{chowclassify}, we have the following lemma.

\begin{lemma}\label{chow.pos.lem}
An LTF $f$ with Chow parameters $(m, \vec{a})$ is positive if and only if $a_i \ge 2^{n-1}$ for all $i$.
\end{lemma}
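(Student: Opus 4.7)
The plan is to deduce both directions directly from the trichotomy in Theorem \ref{chowclassify}, routing the backward direction through the combinatorial characterization of positivity from Theorem \ref{equivGold} ($(i) \iff (i')$).

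For the forward direction, I would suppose $f$ is positive and fix a realization $(\vec{w}, \theta)$ with $w_i > 0$ for every $i$. Then for each $i$ it is \emph{not} the case that every realization has $w_i < 0$, so by Theorem \ref{chowclassify} the possibility $a_i < 2^{n-1}$ is ruled out, leaving $a_i \geq 2^{n-1}$. This half is a one-line application of Theorem \ref{chowclassify}.

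For the backward direction, I would assume $a_i \geq 2^{n-1}$ for every $i$. The negation of $a_i < 2^{n-1}$ yields, via Theorem \ref{chowclassify}, a realization $(\vec{w}^{(i)}, \theta^{(i)})$ of $f$ with $w_i^{(i)} \geq 0$. Using this realization, flipping the $i$th coordinate of any input from $0$ to $1$ adds the nonnegative quantity $w_i^{(i)}$ to $\vec{w}^{(i)} \cdot \vec{x}$, so $f$ is nondecreasing in $x_i$. Running this argument coordinatewise (with potentially different realizations for different $i$) shows that $f$ satisfies combinatorial positivity, i.e., condition $(i')$ of Theorem \ref{equivGold}. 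The equivalence $(i') \iff (i)$ already proven there then supplies a single realization with all $w_i > 0$, so $f$ is positive.

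The main obstacle is conceptual rather than computational: Theorem \ref{chowclassify} only guarantees, per coordinate, a realization with $w_i \geq 0$, not a single global realization whose weights are simultaneously nonnegative. The natural temptation is to glue the coordinatewise realizations into one, which is not obviously possible. The cleaner route is to extract monotonicity in each variable separately and invoke the coordinatewise argument in the proof of Theorem \ref{equivGold} (replacing negative weights by zero and then by a small positive $\epsilon$), which already did the work of turning monotonicity into a globally positive realization.
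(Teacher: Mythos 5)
Your proof is correct and is essentially the natural fleshing-out of what the paper leaves implicit: the paper states the lemma as an immediate corollary of Theorem \ref{chowclassify} with no written proof, and you have supplied the details. Your forward direction is a one-line reading of the trichotomy. Your backward direction correctly handles the subtlety you identify: Theorem \ref{chowclassify} only gives, coordinate by coordinate, \emph{some} realization with $w_i \geq 0$ (all realizations when $a_i > 2^{n-1}$, one specific realization when $a_i = 2^{n-1}$), so there is no immediate single globally nonnegative realization. Routing through combinatorial positivity (i') of Theorem \ref{equivGold} — which is a realization-independent property — and then applying (i') $\Rightarrow$ (i) to produce a single positive realization is exactly the right way to close the gap, and it is the same mechanism the paper itself uses elsewhere (replacing zero or negative weights by a small positive $\epsilon$). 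No gaps, and no genuinely different route from what the paper intends.
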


Ampleness is also easily visible in the Chow parameters.

\begin{lemma}\label{chow.ample.lem}
A function is ample if and only if $m \ge 2^{n-1}$.
\begin{proof}
By Theorem \ref{equivGold}, $f$ is ample if and only if $f(\vec{x}) = 0$ implies that $f(\nx) = 1$. If this negation property holds, then $f$ must be true on at least half of the entries, and thus $m = |T| \ge |F|$. Since $|T| + |F| = 2^n$, we have $m \ge 2^{n-1}$. 

Conversely, if there exists a set of negation pairs $\x, \nx$ which are both in $F$, we claim there is no negation pair $\vec{y}, \n{\vec{y}}$ such that both are true. If there was such a pair, then $f$ would violate the asummability property and thus would not be a linear threshold function. Thus every negation pair has at least one false entry, and $\x, \nx$ both evaluate to false, so $m < 2^{n-1}$.
\end{proof}
\end{lemma}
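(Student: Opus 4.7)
The plan is to translate ampleness into its combinatorial form via Theorem \ref{equivGold} (i.e., $f(\x) = 0 \Rightarrow f(\n{\x}) = 1$) and then compare the sizes of $T$ and $F$ through the negation involution. The key leverage on the harder direction will come from Corollary \ref{simpbad.cor}, which constrains what negation pairs can look like for an LTF.

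For the forward direction, assume $f$ is ample. Then negation defines a map $\x \mapsto \n{\x}$ from $F$ into $T$, and this map is clearly injective (distinct vectors have distinct negations). Hence $|F| \leq |T|$. Since $|T| + |F| = 2^n$ and $m = |T|$, we conclude $m \geq 2^{n-1}$. This step is essentially a counting argument and should be quick.

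For the reverse direction I would argue by contrapositive: suppose $f$ is not ample, so there is some negation pair $\x, \n{\x}$ with $\x, \n{\x} \in F$. The point is now to show that $T$ cannot pick up enough compensating mass. Here Corollary \ref{simpbad.cor} applies: because $f$ is an LTF and we already have a negation pair entirely in $F$, there can be no negation pair $\y, \n{\y}$ entirely in $T$ — otherwise $f$ would violate the asummability criterion. Partitioning $\{0,1\}^n$ into its $2^{n-1}$ negation pairs, every pair contributes at least one element to $F$, and the distinguished pair $\x, \n{\x}$ contributes two. Thus $|F| \geq 2^{n-1} + 1$, giving $m = |T| \leq 2^{n-1} - 1 < 2^{n-1}$, as desired.

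The main (small) obstacle is the reverse direction: the naive counting only gives $|F| \geq 2^{n-1}$, which is not strict enough. The LTF hypothesis must be invoked precisely once, via Corollary \ref{simpbad.cor}, to upgrade to a strict inequality and forbid both a ``both-false'' pair and a ``both-true'' pair from coexisting. Once this observation is in hand, the counting is immediate and the lemma follows.
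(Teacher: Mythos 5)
Your proof is correct and follows essentially the same route as the paper's: the forward direction is the injectivity of negation from $F$ into $T$, and the reverse direction invokes asummability (Corollary \ref{simpbad.cor}) to forbid a both-true negation pair, then counts over the $2^{n-1}$ negation pairs. You spell out the final counting a bit more explicitly than the paper does, but the underlying argument is identical.
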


We use the following results on duality and the Goldilocks criteria.
\begin{lemma}\label{dual.pos.lem}
The dual $f^d$ of a positive LTF $f$ is positive.
\begin{proof}
Assume for the sake of contradiction that $f$ is positive, but $f^d$ is not. This implies there exists some vectors $\vec{x}$ and $\vec{y}$, where $\vec{x}, \vec{y}, \vec{x+y} \in \{0, 1\}^n$, such that $f^d(\vec{x}) = 1$, but $f^d(\vec{x} + \vec{y}) = 0$.

Consider $f(\nx)$ and $f(\n{\vec{x+y}})$. We consider the following cases: 

\begin{itemize}
\item If $f^d(\vec{x}) = 1$, this implies that $f(\n{\vec{x}}) = 0$. 

\item If $f^d(\vec{x} + \vec{y}) = 0$, this implies that $f(\n{\vec{x} + \vec{y}}) = 1$. 
\end{itemize}
Note that in all cases we have that $f(\n{\vec{x}}) = 0$ and $f(\n{\vec{x} + \vec{y}}) = 1$. 
Since $\n{\vec{x}} > \n{\vec{x} + \vec{y}}$, this violates our assumption that $f$ is positive, therefore yielding a contradiction. Thus, the dual of a positive LTF must be positive.
\end{proof}
\end{lemma}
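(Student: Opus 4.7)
The plan is to reduce the claim to the combinatorial reformulation of positivity established in Theorem \ref{equivGold}, which says that an LTF $f$ is positive if and only if it is monotone nondecreasing in each coordinate on $\{0,1\}^n$. Once this reformulation is in place, the lemma becomes a short direct calculation that only exploits the involutive nature of bitwise negation.

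Concretely, assume $f$ is positive and take any $\vec{x}, \vec{y} \in \{0,1\}^n$ with $\vec{x} \le \vec{y}$ coordinatewise. Coordinatewise negation is order-reversing, so $\n{\vec{x}} \ge \n{\vec{y}}$, and applying monotonicity of $f$ yields $f(\n{\vec{x}}) \ge f(\n{\vec{y}})$. Complementing both boolean values reverses the inequality one more time, giving $\overline{f(\n{\vec{x}})} \le \overline{f(\n{\vec{y}})}$, which by definition is $f^d(\vec{x}) \le f^d(\vec{y})$. Thus $f^d$ is itself monotone nondecreasing in each argument, and by Theorem \ref{equivGold} it is positive. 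This is the route I would write up as the primary proof, since it is essentially one line once the combinatorial criterion is invoked.

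An alternative that I would mention in a remark uses the Chow parameters. By Proposition \ref{chow.dual.prop}, if $f$ has Chow parameters $(m, a_1, \dots, a_n)$, then $f^d$ has Chow parameters $(2^n - m, a_1, \dots, a_n)$, so the coordinates $a_1, \dots, a_n$ are preserved under duality. By Lemma \ref{chow.pos.lem}, positivity of an LTF depends only on whether $a_i \ge 2^{n-1}$ for every $i \ge 1$, so positivity of $f$ immediately transfers to $f^d$. This has the small bonus of showing that positivity is equivalent for $f$ and $f^d$, not just that one implies the other.

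I do not anticipate any real obstacle: both approaches are essentially immediate consequences of results already proved in the excerpt. The only care needed is in tracking the direction of inequalities correctly under componentwise negation and boolean complementation, so the write-up should be explicit about the two order reversals in order to avoid sign confusion.
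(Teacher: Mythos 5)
Your primary argument is correct and is essentially the paper's proof stated directly rather than by contradiction: both rest on the two order-reversals, that coordinatewise negation reverses $\le$ on $\{0,1\}^n$ and that boolean complementation reverses $\le$ on $\{0,1\}$, applied to the combinatorial positivity criterion from Theorem \ref{equivGold}. Your direct phrasing is somewhat cleaner than the paper's contradiction framing (whose ``cases'' are not actually alternatives but two facts that hold simultaneously), but it is the same idea.

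Your alternative via Chow parameters is a genuinely different route worth noting. Proposition \ref{chow.dual.prop} shows duality fixes the weight-coordinates $a_1, \ldots, a_n$ and only flips $a_0$, while Lemma \ref{chow.pos.lem} says positivity is detected entirely by the $a_i$ for $i \ge 1$; combining these gives the lemma with no case analysis at all, and as you observe it even yields the stronger statement that $f$ is positive if and only if $f^d$ is. The paper does not take this route, presumably because Lemma \ref{chow.pos.lem} is stated for LTFs and so one would still want to cite that $f^d$ is an LTF (which is standard and implicit in Proposition \ref{chow.dual.prop}); the combinatorial proof avoids even that light dependency. Both are valid; the Chow-parameter version is a nice illustration of the invariants doing all the work.
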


\begin{lemma}
An LTF $f$ is ample if and only if $f \ge f^d$.
\begin{proof}
Consider an LTF $f$ and a negation pair $\x, \nx$. For the first direction, assume that $f$ is ample. Thus without loss of generality there are two cases: 
\begin{itemize}
\item Suppose $f(\x) = 1 = f(\nx)$. In this case, $f^d(\x) = 0 = f^d(\nx)$.
\item Suppose $f(\x) = 0$ and $f(\nx) = 1$. In this case, $f^d(\x) = 0$ and $f^d(\nx) = 1$.
\end{itemize}
In both cases, $f(\x) \ge f^d(\x)$ and $f(\nx) \ge f^d(\nx)$ for an arbitrary negation pair in the domain. Thus $f \ge f^d$.

Conversely, if $f$ is not ample, then there exists some $\x, \nx \in \{0, 1\}^n$ such that $f(\x) = 0 = f(\nx)$. On this pair, $f^d(\x) = 1 = f^d(\nx)$, so $f \ngeq f^d$. Either $f < f^d$, or there exists some other vector $\vec{y} \in \{0, 1\}^n$ with $f(\vec{y}) > f^d(\vec{y})$. But in this case, we must have that $f(\n{\vec{y}}) = 1 = f(\vec{y})$, in violation of the asummability criterion.
\end{proof}
\end{lemma}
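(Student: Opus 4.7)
The plan is to argue directly from the combinatorial characterization of ampleness (property (iii') of Theorem \ref{equivGold}) together with the definition $f^d(\vec{x}) = \overline{f(\n{\vec{x}})}$. Since both $f$ and $f^d$ take values in $\{0,1\}$, the inequality $f \ge f^d$ is really just the statement that there is no $\vec{x}$ with $f(\vec{x}) = 0$ and $f^d(\vec{x}) = 1$, and this should translate essentially tautologically into the combinatorial ampleness condition.

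First I would prove the forward direction. Assume $f$ is ample, and let $\vec{x} \in \{0,1\}^n$ be arbitrary. If $f(\vec{x}) = 1$ then trivially $f(\vec{x}) \ge f^d(\vec{x})$. If instead $f(\vec{x}) = 0$, then by combinatorial ampleness $f(\n{\vec{x}}) = 1$, whence $f^d(\vec{x}) = \overline{f(\n{\vec{x}})} = 0 = f(\vec{x})$. In either case $f(\vec{x}) \ge f^d(\vec{x})$, so $f \ge f^d$.

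Next I would prove the reverse direction, again by unpacking definitions. Suppose $f \ge f^d$, and let $\vec{x} \in \{0,1\}^n$ with $f(\vec{x}) = 0$. Then $f^d(\vec{x}) \le f(\vec{x}) = 0$, so $f^d(\vec{x}) = 0$, i.e., $\overline{f(\n{\vec{x}})} = 0$, i.e., $f(\n{\vec{x}}) = 1$. This verifies the combinatorial ampleness criterion, so Theorem \ref{equivGold} gives that $f$ is ample.

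There is no real obstacle here: both directions reduce to a one-line verification once one observes that $f^d(\vec{x}) = 1$ is the same as $f(\n{\vec{x}}) = 0$. The only subtle point worth highlighting is that the statement does not require any appeal to the asummability criterion or to self-duality; it is purely a restatement of the combinatorial form of ampleness in the language of the partial order on boolean functions. The author's proof does a case analysis over negation pairs $(\vec{x}, \n{\vec{x}})$, but the pointwise argument above is equivalent and arguably cleaner.
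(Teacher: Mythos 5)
Your proof is correct and takes essentially the same underlying route as the paper — unpacking the identity $f^d(\vec{x}) = \overline{f(\n{\vec{x}})}$ against the combinatorial ampleness criterion — but your pointwise presentation is cleaner than the paper's case analysis over negation pairs. You are also right that the appeal to the asummability criterion in the paper's converse is superfluous for the stated claim: once a single $\vec{x}$ with $f(\vec{x}) = 0 < 1 = f^d(\vec{x})$ is exhibited, $f \ngeq f^d$ already follows, and the further argument that $f < f^d$ (rather than the two being incomparable) is a stronger fact not needed here.
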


\begin{corollary}
If $f = f^d$ (self-duality), then $f$ is ample. Otherwise, exactly one of $f, f^d$ is ample.
\end{corollary}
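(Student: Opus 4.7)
The plan is to derive the corollary directly from the preceding lemma, which states that an LTF $g$ is ample if and only if $g \ge g^d$. First I would observe that $(f^d)^d = f$, since $\overline{\overline{f(\overline{\overline{\vec{x}}})}} = f(\vec{x})$. This means the ampleness condition applied to $f^d$ reads $f^d \ge f$, which lets me package both ``$f$ ample'' and ``$f^d$ ample'' as comparability statements between $f$ and $f^d$.

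The self-dual case is immediate: if $f = f^d$, then $f \ge f^d$ holds trivially, so by the lemma $f$ is ample. So the real work lies in the case $f \ne f^d$, which splits into ``at most one is ample'' and ``at least one is ample.''

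For the uniqueness half, I would argue that if both $f$ and $f^d$ were ample, then $f \ge f^d$ and $f^d \ge f$ simultaneously, forcing $f = f^d$ and contradicting the case assumption. For the existence half, I would proceed by contradiction, assuming neither is ample. Then $f \not\ge f^d$ yields a point $\vec{x}$ with $f(\vec{x}) = 0$ and $f^d(\vec{x}) = 1$, which unwinds to $f(\vec{x}) = f(\overline{\vec{x}}) = 0$, a negation pair both false. Symmetrically, $f^d \not\ge f$ gives some $\vec{y}$ with $f(\vec{y}) = f(\overline{\vec{y}}) = 1$, a negation pair both true. But the simultaneous existence of such pairs is exactly the obstruction of Corollary \ref{simpbad.cor}, contradicting the assumption that $f$ is an LTF.

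The main obstacle, as far as I can see, is really just bookkeeping: one must be careful that the involution $(f^d)^d = f$ is invoked correctly so that the preceding lemma can be applied with the roles of $f$ and $f^d$ exchanged, and one must remember to invoke the LTF hypothesis (via Corollary \ref{simpbad.cor}) in the existence half — otherwise the statement would fail for arbitrary boolean functions. Everything else is formal manipulation of the ordering $\ge$ on boolean functions.
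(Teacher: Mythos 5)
Your proof is correct. The paper states this corollary without proof, presenting it as an immediate consequence of the preceding lemma; your derivation — recasting ampleness of $f$ and $f^d$ as the two comparability statements $f \ge f^d$ and $f^d \ge f$ via the involution $(f^d)^d = f$, and securing the existence half by unwinding to two negation pairs that violate the asummability obstruction of Corollary~\ref{simpbad.cor} — is precisely the intended argument.
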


We present some complex interactions between the Goldilocks properties that will be needed later.

\begin{proposition}\label{ps.pas.prop}
Let $f$ be a positive, ample LTF. If $f$ is not self-dual, then the dual of $f$ is small.
\begin{proof}
Assume that $f^d$ is not small. Since $f$ is ample, either $f = f^d$ or $f^d$ is not ample. In the former case the proposition holds, so let $f^d$ be neither small nor ample. 

Let $\hat{e}_i$ be a basis vector for which $f^d(\hat{e}_i) = 1$. Since $f^d$ is not ample, there is some negation pair $\x, \nx \in \{0, 1\}^n$ for which $f^d(\x) = 0 = f(\nx)$. Without loss of generality, let $x_i = 0$ and $\n{x}_i = 1$. Since $f$ is positive and $\n{x}_i \ge \hat{e}_i$, it follows that $f(\nx) = 1$, a contradiction.
\end{proof}
\end{proposition}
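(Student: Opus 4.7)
The plan is to argue by contradiction. Assume $f$ is positive, ample, and not self-dual, yet $f^d$ fails smallness, and then produce a violation of the monotonicity of $f$. The proof has three natural steps: translate the failure of smallness of $f^d$ into data about $f$; use the ampleness/self-duality hypotheses to produce a ``bad'' negation pair for $f^d$; and finally collide these two pieces using positivity.

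First, I would unfold the definition of the dual. The assumption that $f^d$ is not small means $f^d(\hat{e}_i) = 1$ for some index $i$, which is equivalent to $f(\overline{\hat{e}_i}) = 0$, where $\overline{\hat{e}_i}$ is the all-ones vector with a single $0$ in position $i$. This gives one concrete input at which $f$ vanishes.

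Next, I would invoke the corollary immediately preceding the proposition: since $f$ is ample and $f \neq f^d$, exactly one of $f, f^d$ is ample, so $f^d$ must fail to be ample. By the combinatorial characterization of ampleness (property (iii$'$) in Theorem \ref{equivGold}), there is a negation pair $\x, \nx \in \{0,1\}^n$ with $f^d(\x) = f^d(\nx) = 0$, which by the dual definition is the same as $f(\x) = f(\nx) = 1$.

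Finally, the key observation: since $\x$ and $\nx$ differ at every coordinate, one of them has $0$ in position $i$; call that vector $\x$. Then $\x \le \overline{\hat{e}_i}$ coordinatewise (both are $0$ at $i$, and $\overline{\hat{e}_i}$ is $1$ everywhere else), so positivity of $f$ in its combinatorial form (property (i$'$)) forces $1 = f(\x) \le f(\overline{\hat{e}_i}) = 0$, the desired contradiction. The only subtle point, and the step worth highlighting, is choosing the comparison to be $\x \le \overline{\hat{e}_i}$ rather than attempting to compare $\x$ directly with $\hat{e}_i$: the former exploits that $\overline{\hat{e}_i}$ is a \emph{maximal} non-trivial input that $f$ is forced to send to $0$, which is exactly what pairs well with monotonicity.
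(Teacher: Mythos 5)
Your proof is correct and follows the same strategy as the paper's: extract from the failure of smallness of $f^d$ a false point of $f$ at $\overline{\hat{e}_i}$, extract from the failure of ampleness of $f^d$ a negation pair on which $f$ is true, and collide these with monotonicity. The only cosmetic difference is that you translate everything back to $f$ via the dual definition, whereas the paper's argument reasons about $f^d$ directly and so tacitly relies on Lemma~\ref{dual.pos.lem} (that the dual of a positive LTF is positive); your version sidesteps that dependency and also avoids a couple of apparent typos in the paper's own write-up, where ``$f$'' appears in places that should read ``$f^d$''.
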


\begin{proposition}\label{nsmall.namp.n.prop}
There are exactly $n$ positive LTFs which are self-dual and not small. 
\begin{proof}
Let $f$ be such an LTF, and $\hat{e}_i$ be a singleton for which $f(\hat{e}_i) = 1$. Since $f$ is self-dual, $f(\n{\hat{e}_i}) = 0$. Fix any negation pair $\x, \nx$. One of these, $\x$ without loss of generality, satisfies $x_i = 1$, and thus satisfies $\x \ge \hat{e}_i$. Thus $f(\x) = 1$ and $f(\nx) = 0$. In particular, $f(\hat{e}_j) = 0$ for all $j \neq i$. Thus the choice of which $i$ is large completely identifies $f$, and there are exactly $n$ such choices.
\end{proof}
\end{proposition}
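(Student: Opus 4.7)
The plan is to show that a positive, self-dual, non-small LTF $f$ must be a \emph{dictator function}, i.e., of the form $f(\x) = x_i$ for some $i \in \nrange$, and then to verify that each of the $n$ dictator functions does satisfy all three conditions. This gives the exact count of $n$.

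First, since $f$ is not small, Theorem \ref{equivGold}(ii$'$) provides an index $i$ with $f(\hat{e}_i) = 1$. I would then argue that this index is uniquely distinguished. For any $j \neq i$, the vector $\n{\hat{e}_j}$ has a $1$ in the $i$-th position, so $\n{\hat{e}_j} \geq \hat{e}_i$ coordinate-wise; positivity (Theorem \ref{equivGold}(i$'$)) then gives $f(\n{\hat{e}_j}) \geq f(\hat{e}_i) = 1$, and self-duality (Remark \ref{sd.01pairs.rem}) forces $f(\hat{e}_j) = 0$. Hence exactly one basis vector is sent to $1$ by $f$.

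Next I would propagate this information to all of $\{0,1\}^n$. For any $\x$ with $x_i = 1$, we have $\x \geq \hat{e}_i$, so positivity yields $f(\x) \geq f(\hat{e}_i) = 1$; for any $\x$ with $x_i = 0$, the previous case applied to $\nx$ gives $f(\nx) = 1$, and self-duality then yields $f(\x) = 0$. Therefore $f(\x) = x_i$. Conversely, for each $i$ the dictator $\x \mapsto x_i$ is an LTF realized by $(\hat{e}_i, \tfrac{1}{2})$ (perturbed to strictly positive weights), is self-dual by direct verification, and is not small since $f(\hat{e}_i) = 1$; these $n$ functions are pairwise distinct.

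The only delicate point is the uniqueness of the distinguished index $i$, which requires the precise interplay of self-duality and positivity through the complementary pairs $\hat{e}_j, \n{\hat{e}_j}$. Everything else reduces to routine monotonicity and the self-dual pairing, so I do not anticipate substantial obstacles.
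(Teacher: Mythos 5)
Your proposal is correct and follows essentially the same approach as the paper: pick $i$ with $f(\hat{e}_i)=1$ (non-smallness), then use positivity on $\x \ge \hat{e}_i$ combined with self-duality on each negation pair to pin down all of $f$, concluding $f$ is a dictator $\x \mapsto x_i$. The only difference is that you explicitly verify the converse — that each of the $n$ dictators is indeed an LTF that is positive, self-dual, and not small — which the paper leaves implicit when asserting ``exactly $n$''; this is a small but genuine improvement in completeness.
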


\section{An important counting lemma}
\label{sec:nondeg}
This section depends heavily upon the theory developed in Section \ref{sec:properties}. The goal of this section is to give a formula relating the number of LTFs on $n$ variables to the number of Semi-Goldilocks LTFs (see Lemma \ref{ltf.nondegen.formula.lem}). This lemma will be key in obtaining our asymptotic results in Section \ref{sec.asymptotics}. We begin with a bijective correspondence between Semi-Goldilocks LTFs and positive LTFs which are \emph{nondegenerate}.

\begin{definition}\label{def:degnondegen}
Given an LTF $f$ on $n$ variables, define the \emph{degree} $\deg(f)$ of $f$ to be the number of variables for $f$ which are not weak (see Theorem \ref{chowclassify}). If $\deg(f) = n$, then $f$ is said to be \emph{nondegenerate}. 
\end{definition}

\begin{remark} 
Nondegenerate LTFs, in the literature, are sometimes referred to as LTFs on ``exactly $n$ variables.'' Degenerate LTFs are viewed as being fundamentally on $\deg(f)$ variables. To prevent confusion with the actual number of arguments $n$, we use Winder's terminology of degeneracy.
\end{remark}

First, let $\calS$ be the set of Semi-Goldilocks LTFs, and let $\calT$ be the set of nondegenerate positive LTFs. Let $f$ be the linear threshold function realized by $(\w, 1)$, and let $(m, \vec{a})$ be the Chow parameters of $f$. Define $\w'$ as the weight where $w_i' = 2$ if the Chow parameter $a_i$ for $f$ is $2^{n-1}$ and where $w'_i = w_i$ otherwise. 

Let $\phi: \calS \rightarrow \calT$ be a map such that $\phi(f) = h$, where $h$ is a linear threshold function realized by $\w'$. We prove the following statement regarding $\phi$.

\begin{lemma}
\label{thm.phiworkswell}
The map $\phi : \calS \rightarrow \calT$ from Semi-Goldilocks LTFs to nondegenerate positive LTFs is well defined, i.e. $\phi$ does not depend on our choice of realization $\w$, and $\phi(f) \in \calT$.

\begin{proof}
First, we consider two realizations $\w$ and $\w'$ for the same $f$, and their images $\phi(\w)$ and $\phi(\w')$. We claim that these images are realizations of the same boolean function. Since $\w$ and $\w'$ were weak in the same variables, $\phi(\w)(\x) = 1 = \phi(\w')(\x)$ whenever $x_i = 1$ for any of these formerly weak variables. For all other $\x$, the values of $\phi(\w)(\x)$ and $\phi(\w')(\x)$ are determined by unchanged summations of terms in $\w$ and $\w'$. Since $\w$ and $\w'$ are both realizations of $f$,
\[
	\phi(\w)(\x) = f(\x) = \phi(\w')(\x).
\]

To check that $\phi(f) \in \calT$ it suffices to show that $\phi$ takes a Semi-Goldilocks LTF to a non-degenerate positive one, i.e. that $\deg(f) = n$. This is true precisely when $f$ has no \emph{weak} variables. We note that this holds by construction, since any Chow parameter for $f$ which is weak (i.e. is $2^{n-1}$) is replaced by one which is not weak.
\end{proof}
\end{lemma}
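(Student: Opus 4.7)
The lemma has two parts, which I will verify separately: that $\phi(f)$ does not depend on the realization $\w$ chosen for $f$, and that $\phi(f) \in \calT$, i.e.\ is positive and nondegenerate. For the first, the crucial observation is that the set of weak indices $W = \{i : a_i = 2^{n-1}\}$ depends only on the Chow parameters of $f$, hence is common to all realizations. By Theorem \ref{chowclassify}, every non-weak index $j$ has $w_j > 0$ in every realization; combined with the fact that $\phi$ sets $w_i' = 2$ on weak coordinates, this shows that $\w'$ has all strictly positive entries regardless of the choice of $\w$. Evaluating $\phi(\w)$ at an input $\x \in \{0,1\}^n$ with some $i \in W$ satisfying $x_i = 1$ then gives $\phi(\w)(\x) = 1$, since $\w' \cdot \x \ge w_i' = 2 > 1$. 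On inputs with all weak coordinates zero, $\w' \cdot \x = \w \cdot \x$, so $\phi(\w)(\x) = f(\x)$. In neither case does the answer depend on the choice of $\w$, so $\phi$ is well-defined.

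For the second part, positivity of $\phi(f)$ is immediate from the positivity of $\w'$ noted above. For nondegeneracy I need $b_i > 2^{n-1}$ for every $i$, where $\vec{b}$ denotes the Chow parameters of $\phi(f)$; equivalently, $\phi(f)$ must depend on every variable. If $i \in W$, the inputs $\vec{0}$ and $\hat{e}_i$ witness this: $\phi(f)(\vec{0}) = 0$ but $\phi(f)(\hat{e}_i) = \operatorname{sgn}(2-1) = 1$. If $i \notin W$, I invoke the fact --- easy to check from the Chow parameter definition in the positive case --- that a variable of a positive LTF is weak if and only if the function is independent of that variable. Since $i$ is non-weak, $f$ depends on $x_i$, so some strict-increase pair $\{\x, \x + \hat{e}_i\}$ for $f$ exists. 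Flipping each weak coordinate of $\x$ to $0$ does not alter either $f$-value (by the inertness just noted), so I obtain a strict-increase pair lying in the slab where $\phi(f)$ coincides with $f$. This provides a strict-increase pair for $\phi(f)$ at $x_i$, giving $b_i > 2^{n-1}$.

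The step I anticipate requiring the most care is the nondegeneracy check for formerly non-weak variables: a priori the modification could mask $f$'s dependence on a non-weak $x_i$ by pushing all witnessing inputs into the region where $\phi(f)$ is forced to equal $1$. The rescue is the functional inertness of weak variables for positive LTFs, which lets me transport any witnessing pair down into the zero-weak-coordinate slab where $\phi(f)$ agrees with $f$.
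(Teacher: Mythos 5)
Your proof is correct and works within the same overall framework as the paper's: well-definedness follows because the weak index set is intrinsic to $f$ (read off from the Chow parameters), and membership in $\calT$ is verified by checking that $\phi(f)$ is positive and has no weak variables. One small remark: you restrict the equivalence ``weak $\iff$ functionally inert'' to positive LTFs, but Proposition \ref{nchoosekreps.prop}'s predecessor proposition in the paper establishes this for arbitrary LTFs, so no restriction is needed.

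Where you genuinely improve on the paper is in the nondegeneracy check. The paper's proof disposes of this step with ``holds by construction, since any Chow parameter for $f$ which is weak is replaced by one which is not weak,'' which only addresses why formerly weak variables of $f$ become non-weak for $\phi(f)$ (via $w_i' = 2 > 1$). It is silent on why the formerly non-weak variables \emph{remain} non-weak for $\phi(f)$ --- a priori, the weight perturbation could push some $b_j$ to exactly $2^{n-1}$. You close this gap cleanly: for $j \notin W$, take a strict-increase witness $(\x, \x + \hat e_j)$ for $f$, zero out the weak coordinates of $\x$ (harmless by functional inertness), and observe that both resulting points lie in the slab where $\phi(f)$ coincides with $f$, yielding a strict-increase witness for $\phi(f)$ at $x_j$. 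This is the more complete argument, and the anticipated pitfall you flagged at the end is exactly the one the paper's terse justification leaves open.
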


\begin{theorem}
\label{thm.semigoldbij}
The map $\phi : \calS \rightarrow \calT$ (that is, from Semi-Goldilocks LTFs to nondegenerate positive LTFs) is a bijection. 
\begin{proof}
First, we prove that $\phi$ is an injective map. Suppose for two Semi-Goldilocks LTFs $f_1$ and $f_2$ that we have $\phi(f_1) = \phi(f_2) = h$. Note that $h$, by definition of $\phi$, is a nondegenerate positive LTF can be realized by some weight $\w'$. Therefore, $\w'$ contains only positive weights and has Chow parameters which are all greater than $2^{n-1}$. For each $i$, either $w'_i \in (0, 1]$ or $w'_i > 1$. Given $\w'$, we can generate a new weight $\w$ as follows: if $w'_i > 1$, then $w_i = \epsilon$ for a sufficiently small value of $\epsilon$, and $w_i = w'_i$ otherwise. Since $\w$ can realize both $f_1$ and $f_2$, we have that $f_1 = f_2$. 

Now, we prove that $\phi$ is a surjective map as well. Any nondegenerate positive linear threshold function $h$ can be realized by some weight $\w'$, where $\w'$ yields a positive LTF. Note that for all $i$, either $w'_i \in (0, 1]$ or $w'_i > 1$. We can use $\w'$ to obtain a new weight $\w$ (if $w'_i > 1$, then $w_i = \epsilon$ for a sufficiently small value of $\epsilon$, and $w_i = w'_i$ otherwise) that is both positive and small. The weight $\w$ can realize a Semi-Goldilocks function $f$ such that $\phi(f) = h$. Therefore, for all $h \in \calT$, there exists $f \in \calS$ such that $\phi(f) = h$. 
\end{proof}
\end{theorem}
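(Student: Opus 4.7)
The plan is to construct an explicit two-sided inverse $\psi : \calT \to \calS$ to $\phi$, avoiding the separate injectivity/surjectivity arguments of the proof template. Given $h \in \calT$, define $S_h := \{i : h(\hat{e}_i) = 1\}$, and observe that in any positive realization $(\w', 1)$ of $h$ we have $w'_i > 1$ precisely when $i \in S_h$, since $h(\hat{e}_i) = \sgn(w'_i - 1)$. Form $\w$ by setting $w_i = \epsilon$ for $i \in S_h$ and $w_i = w'_i$ otherwise, with $\epsilon > 0$ small enough that $\sgn(\w \cdot \vec{x} - 1)$ agrees with $\sgn(\sum_{j \notin S_h} w'_j x_j - 1)$ on every input where the latter is nonzero. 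Let $\psi(h)$ be the LTF realized by $(\w, 1)$. By construction $\w$ is strictly positive with entries in $(0, 1]$, so $\psi(h) \in \calS$; moreover $\psi(h)$ agrees with $h$ on the subcube $\{x_{S_h} = 0\}$ and is independent of the coordinates in $S_h$, so it depends on neither the choice of $\w'$ nor the particular small $\epsilon$.

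The crux is to show that the weak variable set of $f := \psi(h)$ equals $S_h$. For $i \in S_h$, $f$ is independent of $x_i$ by the $\epsilon$-smallness argument, yielding the Chow parameter $a_i(f) = 2^{n-1}$. For $i \notin S_h$, I must show $f$ depends on $x_i$, equivalently that $h|_{\{x_{S_h} = 0\}}$ depends on $x_i$. Choose a minimal true vector $\vec{z}$ of $h$ with $z_i = 1$; such a $\vec{z}$ exists because $h$ is nondegenerate and positive. If some $j \in S_h$ satisfied $z_j = 1$, then $\hat{e}_j$ would be a true vector of $h$ strictly smaller than $\vec{z}$ (using $z_i = 1$ with $i \neq j$), contradicting minimality of $\vec{z}$. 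Hence $z_{S_h} = 0$, and the edge from $\vec{z} - \hat{e}_i$ to $\vec{z}$ lies in the subcube $\{x_{S_h} = 0\}$ and witnesses the required dependence.

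With this weak-set identification in hand, the two compositions are quick. For $\phi \circ \psi = \mathrm{id}_{\calT}$: applying $\phi$ to $f := \psi(h)$ replaces the $\epsilon$-entries of $\w$ by $2$, producing a realization that agrees with $\w'$ on $\{x_{S_h} = 0\}$ and forces the output to $1$ whenever some $x_j = 1$ for $j \in S_h$, matching $h$ by positivity. For $\psi \circ \phi = \mathrm{id}_{\calS}$: given $f \in \calS$ with Semi-Goldilocks realization $(\w, 1)$ and weak set $W_f$, the definition of $\phi$ yields a realization of $\phi(f)$ whose entries exceeding $1$ are exactly those indexed by $W_f$, so $S_{\phi(f)} = W_f$, and the construction of $\psi(\phi(f))$ returns $f$. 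The main obstacle is the minimal-true-vector step, which transfers nondegeneracy of $h$ from the full cube to the subcube $\{x_{S_h} = 0\}$; the remaining steps are bookkeeping with the realizations.
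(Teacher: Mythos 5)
Your proposal is correct, and it is in fact more careful than the paper's own proof, though both rest on the same core construction of replacing large weights by a small $\epsilon$. Where the paper argues injectivity and surjectivity separately and leaves the key assertions unjustified --- that the constructed $\w$ ``can realize both $f_1$ and $f_2$'' (injectivity) and that $\phi(f) = h$ (surjectivity) --- you build an explicit two-sided inverse $\psi$ and verify the combinatorial fact on which both assertions implicitly rest: the weak-variable set of $\psi(h)$ equals $S_h = \{i : h(\hat{e}_i) = 1\}$. Your minimal-true-vector argument, which transfers nondegeneracy of $h$ from the full cube to the subcube $\{x_{S_h} = 0\}$, is exactly what closes this gap, and once it is in hand the two composition checks are indeed just bookkeeping. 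One small point of precision worth spelling out: you should state that $\vec{z}$ is chosen minimal in the partial order among \emph{all} true vectors of $h$, not merely among those with $z_i = 1$ --- otherwise the contradiction with $\hat{e}_j$ does not apply. Existence of a minimal true vector with $z_i = 1$ then needs a word of justification: nondegeneracy in $i$ together with positivity gives some $\vec{x}$ with $h(\vec{x}) = 0$ and $h(\vec{x}+\hat{e}_i) = 1$, and any true vector minimal below $\vec{x}+\hat{e}_i$ is globally minimal and (by positivity applied to $\vec{x}$) must have its $i$-th coordinate equal to $1$. With that clarified, the proof is complete and fills in details the paper glosses over.
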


\subsection{Weak weights and degree}

The following classification of weak variables is useful.

\begin{proposition}
Given a linear threshold function $f$ and a variable $i$, the LTF $f$ is weak in $i$ if and only if
\[
	f(x_1, \ldots, x_{i-1}, 1, x_{i+1}, \ldots, x_n) = f(x_1, \ldots, x_{i-1}, 0, x_{i+1}, \ldots, x_n)
\]
for all elements of the domain.
\begin{proof}
For the first direction, if this equality holds on the domain, then for each pair of vectors $\x, \nx^i$ which differ only in the $i$-th component, exactly one of the two vectors is counted towards the sum $|T_{x_i = 1}| + |F_{x_i = 0}|$. Thus, this sum is exactly half the domain, and $a_i = 2^{n-1}$. Thus $f$ is weak.

By definition, $f$ has Chow parameter $a_i = 2^{n-1}$. Thus, 
\[
	2^{n-1} = |T_{x_i = 1}| + |F_{x_i = 0}|.
\]
Consider a pair of vectors $\x, \nx^i \in \{0, 1\}^n$ which differ only in the value of the $i$-th component. The entire domain can be split into $2^{n-1}$ such pairs. Assume for the sake of contradiction that $f(\x) = 1$ and $f(\nx^i) = 0$. Then either both or neither of the vectors are counted towards the value of $a_i$. In order to preserve the equality with half of the domain, there must be some other pair which differ only in $i$, denoted $\y, \ny^i$, such that $f(\y) = 0$ and $f(\ny^i) = 1$ but that $x_i \neq y_i$. The pair $\y, \ny^i$ are then neither (resp. both) counted towards the value of $a_i$, preserving the equality. 

Now consider the pairs $(\x, \ny^i)$ and $(\y, \nx^i)$. The value of $f$ is 1 on the first pair and 0 on the second pair, but $\x + \ny^i = \y + \nx^i$. Thus, we have violated the asummability criterion of Corollary \ref{simpbad.cor}, and $f$ cannot be a linear threshold function. Thus $f(\x) = f(\nx^i)$ for all pairs of vectors which differ only in the $i$-th component.
\end{proof}
\end{proposition}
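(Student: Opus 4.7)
The plan is to prove both directions by the natural pairing of the domain into $2^{n-1}$ pairs $(\x, \nx^i)$ that differ only in the $i$-th coordinate, and then read off the Chow parameter $a_i$ by a careful case analysis of the value of $f$ on each pair.

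First I would handle the easier direction ($\Leftarrow$): assume $f$ is independent of $x_i$. Then on every pair $(\x, \nx^i)$ the function is either true on both or false on both. In the first case exactly one vector (the one with $i$-th coordinate equal to $1$) is counted in the first sum defining $a_i$, and in the second case exactly one vector (the one with $i$-th coordinate equal to $0$) is counted in the second sum. So every pair contributes exactly $1$ to $a_i$, giving $a_i = 2^{n-1}$.

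For the harder direction ($\Rightarrow$), I would classify each pair $(\x, \nx^i)$ by $(f(\text{side with }x_i=1),\ f(\text{side with }x_i=0))$ into four types: $(1,1)$, $(0,0)$, $(1,0)$, $(0,1)$. A direct count shows the first two types contribute $1$ each to $a_i$, type $(1,0)$ contributes $2$, and type $(0,1)$ contributes $0$. Since the total is $a_i = 2^{n-1}$ and the number of pairs is also $2^{n-1}$, the count of type $(1,0)$ pairs must equal the count of type $(0,1)$ pairs. If I can rule out the existence of a type $(0,1)$ pair, then there are also no type $(1,0)$ pairs, which is exactly the functional independence statement.

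The key obstacle is ruling out the coexistence of a type $(1,0)$ pair $(\x, \nx^i)$ and a type $(0,1)$ pair $(\y, \ny^i)$. This is where I would invoke the asummability criterion from Theorem \ref{asum.thm} (via Corollary \ref{simpbad.cor} extended to a non-trivial relation): the true vectors $\x$ and $\ny^i$ and the false vectors $\nx^i$ and $\y$ satisfy $\x + \ny^i = \nx^i + \y$ (both sides agree off coordinate $i$ and both have $i$-th coordinate equal to $1$), so setting $c = 1$ on these four vectors and $0$ elsewhere gives a nontrivial witness violating asummability. This contradicts the assumption that $f$ is an LTF, so type $(0,1)$ pairs do not exist, and by the counting argument neither do type $(1,0)$ pairs, completing the proof.
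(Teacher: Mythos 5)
Your proof is correct and follows essentially the same approach as the paper: pair the domain into $2^{n-1}$ pairs differing only in coordinate $i$, count contributions to $a_i$ per pair, and use the asummability criterion on a $(1,0)$ pair together with a $(0,1)$ pair to derive the contradiction. Your four-type classification is a somewhat cleaner organization of the same counting argument, and you correctly require $\x$ and $\y$ to agree in the $i$-th coordinate so that $\x + \ny^i = \y + \nx^i$ holds (the paper's phrase ``but that $x_i \neq y_i$'' appears to be a slip, since the identity needed for asummability forces $x_i = y_i$ under the convention used).
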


The next two propositions allow us to formulate the number of LTFs in terms of nondegeneracy (Lemma \ref{ltf.nondegen.formula.lem}).

\begin{proposition}\label{nchoosekreps.prop}
There is an ${n \choose k}$-to-1 map $T_{n, k}$ from LTFs on $n$ variables with degree $k$ to nondegenerate LTFs on $k$ variables.
\begin{proof}
Recall that by definition an LTF of degree $k$ has $n-k$ weak weights. By the above proposition, for each weak weight the function is identical whether that value is chosen to be 0 or 1. Thus we have
\[
	f(\x) = f'(x_1, \ldots, x_{i-1}, x_{i+1}, \ldots, x_n) 
\]
on the whole domain. More generally, $f$ can be identified with a function on $k$ variables which completely identifies it at all points on the domain by removing the weak weights. However, any two functions which differ only in their placement of the $n-k$ weak weights will map to the same reduced function under this identification, and so there are ${n \choose n-k} = {n \choose k}$ elements of each fiber of the map.
\end{proof}
\end{proposition}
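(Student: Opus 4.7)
The plan is to construct $T_{n,k}$ explicitly as a forgetful map, check that it is well-defined by invoking the previous proposition, and then count fibers by noting that the fiber over a nondegenerate $k$-variable LTF is indexed by the choice of which $k$ of the $n$ input slots play the role of the "non-weak" variables.

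First I would fix $f$ an LTF on $n$ variables with $\deg(f) = k$, so that $f$ has exactly $n-k$ weak variables; let $W \subseteq [n]$ be the set of weak indices and $S = [n]\setminus W$ the non-weak indices, with $|S| = k$. By the previous proposition, for each $i \in W$ and each $\x \in \{0,1\}^n$, the value $f(\x)$ is independent of $x_i$. Iterating this over all $i\in W$, the value $f(\x)$ depends only on the restriction $\x|_S$, so there is a well-defined boolean function $f' : \{0,1\}^S \to \{0,1\}$ given by $f'(\x|_S) = f(\x)$. I define $T_{n,k}(f) := f'$ (after identifying $\{0,1\}^S$ with $\{0,1\}^k$ via the order-preserving bijection $S \cong [k]$).

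Next I would verify that $f'$ is a nondegenerate LTF on $k$ variables. It is an LTF because any realization $(\vec w, \theta)$ of $f$ restricts to a realization $(\vec w|_S, \theta)$ of $f'$: on $\x \in \{0,1\}^S$, extending by zeros on $W$ gives an element $\tilde\x$ of $\{0,1\}^n$ with $\vec w \cdot \tilde \x = \vec w|_S \cdot \x$, and $f(\tilde \x) = f'(\x)$. Nondegeneracy of $f'$ follows from the characterization in the previous proposition: if $f'$ were weak in some $j \in S$, then flipping the $j$-th coordinate of any $\x \in \{0,1\}^S$ would not change $f'(\x)$, which by the definition of $f'$ would mean flipping the corresponding coordinate of elements of $\{0,1\}^n$ never changes $f$, forcing $j$ to be weak for $f$, a contradiction to $j \in S$.

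Finally I would count the fibers. Given any nondegenerate LTF $g$ on $k$ variables, I construct the preimages of $g$ under $T_{n,k}$ as follows: for each $k$-element subset $S \subseteq [n]$, define $g_S : \{0,1\}^n \to \{0,1\}$ by $g_S(\x) = g(\x|_S)$. Each $g_S$ is an LTF on $n$ variables (take any realization of $g$ and pad it with zero weights on $[n]\setminus S$, then bump the zero weights to a sufficiently small positive $\epsilon$ if one prefers nonzero weights; either way the realizing pair produces $g_S$). Each $g_S$ has weak variables exactly $[n]\setminus S$, so $\deg(g_S) = k$ and $T_{n,k}(g_S) = g$. Conversely any $f$ with $T_{n,k}(f) = g$ must have weak set equal to some $k$-element complement, and then $f = g_S$ on all of $\{0,1\}^n$ by construction of $T_{n,k}$. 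Distinct choices of $S$ yield distinct functions $g_S$ since their sets of weak variables differ (here I use nondegeneracy of $g$: if $g_S = g_{S'}$ but $S \neq S'$, then picking $j \in S \setminus S'$ one finds $g$ is weak at the $j$-th coordinate, contradicting nondegeneracy). Hence the fiber over $g$ has exactly $\binom{n}{k}$ elements.

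The only step that requires a little care is the last one, namely that distinct $S$ give distinct $g_S$; this is precisely where the nondegeneracy hypothesis on the target is needed, and without it the map would not be $\binom{n}{k}$-to-1 but rather a map onto LTFs whose fiber sizes depend on the symmetry of the target.
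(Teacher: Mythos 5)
Your proof is correct and follows essentially the same route as the paper's: define the forgetful map by deleting the weak coordinates, observe that any placement of the $n-k$ weak slots gives a distinct preimage, and count $\binom{n}{k}$ choices. Your version is more careful at two points the paper leaves implicit---checking that the reduced function is a nondegenerate LTF, and using nondegeneracy of the target to show that distinct subsets $S$ yield distinct preimages $g_S$---but these are gaps filled, not a different argument.
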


\begin{proposition}\label{nclassnondegen.prop}
The N-class (orbit under $\vu$-complementation) of a nondegenerate LTF on $n$ variables has exactly $2^n$ distinct LTFs.
\begin{proof}
Since $f$ is nondegenerate, for each $i$ there exist two elements of the domain $\x, \nx^i$, differing only in the $i$-th place, for which $f(\x) \neq f(\nx^i)$, and there exists such a pair for every $i$. In order for a $\vu$-complementation to fix all the elements of the domain, it must be trivial. Thus, each of the $2^{n}$ distinct nontrivial $\vu$-complementations takes $f$ to some other nondegenerate LTF. Since each of these functions will have a different set of Chow parameters (by Proposition \ref{chow.perm.prop}) they are all distinct. Thus there are $2^n$ distinct LTFs in the N-class of $f$.
\end{proof}
\end{proposition}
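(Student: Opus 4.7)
The plan is to establish the claim by showing that the group action of $\vu$-complementation on a nondegenerate LTF $f$ is \emph{free}. Since there are exactly $2^n$ possible complementations (one $\gamma_{\vu}$ for each $\vu \in \{0,1\}^n$), freeness will give an orbit of size exactly $2^n$, matching the desired count. Note that the orbit cannot be larger than $2^n$ for trivial reasons, so the entire content lies in the free-action statement.

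To prove freeness, I would first observe that $\gamma_{\vu} \circ \gamma_{\vu'} = \gamma_{\vu \oplus \vu'}$, where $\oplus$ denotes componentwise addition mod $2$, so that $\vu$-complementation is an action of the group $(\Z/2\Z)^n$. Consequently $f \circ \gamma_{\vu} = f \circ \gamma_{\vu'}$ is equivalent to $f \circ \gamma_{\vu \oplus \vu'} = f$, and it suffices to show that the stabilizer of $f$ is trivial, i.e.\ that $f \circ \gamma_{\vu} = f$ forces $\vu = 0$.

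The key tool will be Proposition \ref{chow.ucomp.prop}, describing how Chow parameters transform under $\vu$-complementation: if $f$ has Chow parameters $(m, a_1, \ldots, a_n)$, then $f \circ \gamma_{\vu}$ has Chow parameters $(m, b_1, \ldots, b_n)$ with $b_i = 2^n - a_i$ whenever $u_i = 1$ and $b_i = a_i$ otherwise. By Definition \ref{def:degnondegen} together with Theorem \ref{chowclassify}, nondegeneracy of $f$ means that no $a_i$ equals $2^{n-1}$, so for every $i$ with $u_i = 1$ we get $a_i \neq 2^n - a_i = b_i$. Hence whenever $\vu \neq 0$ the Chow parameters of $f$ and $f \circ \gamma_{\vu}$ differ in at least one coordinate, and Corollary \ref{chowunique} (uniqueness of an LTF from its Chow parameters) yields $f \circ \gamma_{\vu} \neq f$, as required.

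I do not anticipate a real obstacle here: both ingredients (the transformation rule for Chow parameters under complementation, and uniqueness of LTFs given their Chow parameters) were recorded in Section \ref{chow parameters}, and nondegeneracy is by definition the absence of Chow parameters equal to $2^{n-1}$. If one prefers a proof avoiding the Chow-parameter uniqueness theorem, one can instead argue directly: nondegeneracy in coordinate $i$ produces a pair $\x, \x + \hat{e}_i \in \{0,1\}^n$ with $f(\x) \neq f(\x + \hat{e}_i)$, and applying $\gamma_{\vu}$ with $u_i = 1$ swaps the values of $f$ on this pair, so $f \circ \gamma_{\vu} \neq f$. Either route is short; the real content is simply that nondegeneracy rules out any nontrivial symmetry under complementation.
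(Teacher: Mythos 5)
Your main argument is correct and follows essentially the same route as the paper's: use the transformation rule for Chow parameters under $\vu$-complementation (Proposition \ref{chow.ucomp.prop}) together with the fact that nondegeneracy means no $a_i$ equals $2^{n-1}$, and conclude via Corollary \ref{chowunique} that the stabilizer of $f$ is trivial. If anything your write-up is cleaner than the paper's, which has two small slips: it miscites Proposition \ref{chow.perm.prop} (the permutation rule) where \ref{chow.ucomp.prop} (the complementation rule) is meant, and it speaks of ``$2^n$ distinct nontrivial $\vu$-complementations'' where there are $2^n - 1$. Your stabilizer framing sidesteps the latter.

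However, the ``direct alternative'' you sketch at the end does not work as stated. If $\vu$ has $u_i = 1$ but also $u_j = 1$ for some $j \neq i$, then $\gamma_{\vu}$ does not send the pair $\x, \x + \hat{e}_i$ to itself; it sends it to the new pair $\gamma_{\vu}(\x), \gamma_{\vu}(\x + \hat{e}_i)$, on which you have no control over $f$'s values, so no contradiction falls out. The argument as written only handles $\vu = \hat{e}_i$, which does not rule out a nontrivial stabilizer supported off the standard basis (e.g.\ $\vu = \hat{e}_1 \oplus \hat{e}_2$). To salvage a Chow-free route you would need to invoke unateness of LTFs: if $f \circ \gamma_{\vu} = f$ and $u_i = 1$, then $f$ is simultaneously increasing and decreasing in $x_i$, hence constant in $x_i$, contradicting nondegeneracy. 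Since the parenthetical was only offered as an aside, this does not affect the correctness of your main proof.
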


\subsection{Formula for LTFs in terms of $\SGold(k)$}

We prove the following exact formula for $\LTF(n)$ in terms of $\SGold(k)$ where $k \le n$.
\begin{lemma}\label{ltf.nondegen.formula.lem}
For all $n$, the number of linear threshold functions on $n$ variables is given by
\[
	\LTF(n) = \sum_{k=0}^n {n \choose k} 2^k \SGold(k).
\] \end{lemma}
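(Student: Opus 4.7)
The plan is to decompose the count $\LTF(n)$ of all linear threshold functions on $n$ variables by degree, then collapse each degree stratum to Semi-Goldilocks functions using the results already developed.

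First I would partition $\LTF(n)$ by degree: every LTF on $n$ variables has some degree $k \in \{0,1,\dots,n\}$, so
\[
\LTF(n) = \sum_{k=0}^{n} \#\{\text{LTFs on } n \text{ variables with degree } k\}.
\]
By Proposition \ref{nchoosekreps.prop}, the map $T_{n,k}$ is an $\binom{n}{k}$-to-$1$ surjection from LTFs on $n$ variables of degree $k$ onto nondegenerate LTFs on $k$ variables. Hence the $k$-th summand equals $\binom{n}{k}$ times the number of nondegenerate LTFs on $k$ variables, and it remains to identify the latter quantity with $2^{k}\SGold(k)$.

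Next I would group nondegenerate LTFs on $k$ variables by their N-type class (the orbit under $\vu$-complementation). By Proposition \ref{nclassnondegen.prop}, each such orbit has size exactly $2^{k}$. The main observation I need is that each N-class of a nondegenerate LTF contains exactly one positive representative. This follows from unateness: for every LTF $f$ each variable $x_i$ is either monotone increasing or monotone decreasing in $f$, and in the nondegenerate case it is strictly one of the two; so there is a unique $\vu \in \{0,1\}^k$ whose complementation flips precisely the decreasing variables, producing a positive function, and no other complementation in the orbit can be positive. Combining, the number of nondegenerate LTFs on $k$ variables is $2^{k}$ times the number of nondegenerate positive LTFs on $k$ variables, and by the bijection $\phi : \calS \to \calT$ of Theorem \ref{thm.semigoldbij}, the latter count equals $\SGold(k)$.

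Assembling the two reductions gives
\[
\LTF(n) \;=\; \sum_{k=0}^{n} \binom{n}{k}\cdot 2^{k}\,\SGold(k),
\]
as desired. The only nontrivial step is the existence and uniqueness of a positive representative in each N-class of a nondegenerate LTF; I expect this to be the main obstacle, but it should follow cleanly from the unate characterization of LTFs together with Chow's sign criterion (Theorem \ref{chowclassify}), which guarantees that the signs of $a_i - 2^{k-1}$ for a nondegenerate $f$ are all nonzero and thus pick out a unique $\vu$-complementation making every weight strictly positive.
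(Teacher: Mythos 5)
Your proposal is correct and follows the paper's own proof essentially step for step: partition by degree, collapse via the $\binom{n}{k}$-to-$1$ map of Proposition \ref{nchoosekreps.prop}, then collapse each N-class of nondegenerate LTFs (size $2^k$ by Proposition \ref{nclassnondegen.prop}, with a unique positive representative) and finish with the bijection $\phi$ of Theorem \ref{thm.semigoldbij}. You simply spell out the ``exactly one positive representative in each N-class'' step, which the paper asserts without elaboration; otherwise the two arguments are the same.
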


\begin{proof}
First, we note that linear threshold functions can be partitioned according to their degree. Then the identification $T_{n, k}$ (Proposition \ref{nchoosekreps.prop}) implies:
\[
	\LTF(n) = \sum_{k = 0}^n {n \choose k} |\{f \in \Thresh_k : \text{$f$ nondegenerate}\}|.
\]
By Proposition \ref{nclassnondegen.prop}, these nondegenerate LTFs can be split into N-type classes, each with $2^k$ elements, of which exactly one is positive. Thus we have
\[
	\LTF(n) = \sum_{k=0}^n {n \choose k} 2^k |\{f \in \Thresh_k : \text{$f$ positive and nondegenerate}\}|
\]
which becomes by Theorem \ref{thm.semigoldbij}
\[
\LTF(n) =\sum_{k=0}^n {n \choose k} 2^k \SGold(k). 
\]
\end{proof}

This lemma will be key in the next section.

\section{Asymptotic results}
\label{sec.asymptotics}
Our starting point is the asymptotic formula for $\LTF(n)$, the number of linear threshold functions on $n$ variables, due to Irmatov.

\begin{theorem}\cite{irmatov96}
\label{ltf.asymp.irmatov.thm}
The number of linear threshold functions on $n$ variables satisfies the asymptotic formula
\[
	\LTF(n) \sim 2 \sum_{i = 0}^n {2^n - 1 \choose i} \sim 2^{n^2 - n \log_2{n} + \gamma(n)}, 
\]
where $\gamma(n)$ is some monotonic function satisfying $\gamma(n) = O(n)$.
\end{theorem}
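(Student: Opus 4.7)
My plan is to interpret linear threshold functions geometrically, as regions of a central hyperplane arrangement, and then count these regions asymptotically. Working in the realization space $\R^{n+1}$ with coordinates $(\vec{w}, \theta)$, for each $\vec{x} \in \{0,1\}^n$ I define the central hyperplane
\[
H_{\vec{x}} := \{(\vec{w},\theta) \in \R^{n+1} : \vec{w} \cdot \vec{x} - \theta = 0\}.
\]
Two realizations give the same LTF precisely when they lie in the same open region of the arrangement $\calA = \{H_{\vec{x}}\}_{\vec{x} \in \{0,1\}^n}$; boundary cases are handled by the standard observation that every LTF admits a realization avoiding $\sgn(0)$. Hence $\LTF(n)$ equals the number of open regions of $\calA$.

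For the upper bound, I would invoke Schl\"afli's formula (equivalently, Zaslavsky's theorem for central arrangements): any central arrangement of $N$ hyperplanes in $\R^{d+1}$ has at most $2\sum_{i=0}^{d} \binom{N-1}{i}$ open regions, with equality when the arrangement is in general position. Setting $N = 2^n$ and $d+1 = n+1$ gives
\[
\LTF(n) \le 2\sum_{i=0}^n \binom{2^n-1}{i}.
\]

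The main obstacle is the matching lower bound, since $\calA$ is far from generic: 0-1 vectors satisfy many linear dependencies. Irmatov's strategy is to show these dependencies cost only a lower-order factor. Concretely, one partitions the open regions of $\calA$ according to the $(n+1)$-tuple of hyperplanes meeting a chosen vertex of the region's closure, and then proves that the fraction of $(n+1)$-subsets of $\{0,1\}^n$ which fail affine general position tends to zero as $n \to \infty$. The key combinatorial input is that affine dependencies among 0-1 vectors require special structure (such as antipodal pairs through the center $\tfrac{1}{2}\vec{1}$), which is atypical among random $(n+1)$-subsets. A convenient reformulation passes to the sphere $S^n \subset \R^{n+1}$ and counts pieces of the resulting spherical arrangement by induction on $n$, controlling the genericity defect at each step. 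I expect this genericity estimate, which must match the Schl\"afli upper bound to leading order, to be the crux of the argument.

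Finally, to extract the coarser asymptotic, observe that in the sum $\sum_{i=0}^n \binom{2^n-1}{i}$ the ratio of consecutive terms is $\binom{2^n-1}{i+1}/\binom{2^n-1}{i} = (2^n-1-i)/(i+1)$, which is of order $2^n/n$ for $i$ near $n$. So the sum is dominated by $\binom{2^n-1}{n}$ up to a factor $1 + O(n/2^n)$. Expanding,
\[
\log_2 \binom{2^n-1}{n} = \sum_{i=1}^n \log_2(2^n - i) - \log_2(n!) = n^2 - n\log_2 n + O(n),
\]
using Stirling's approximation $\log_2(n!) = n\log_2 n - n \log_2 e + O(\log n)$ and $\sum_{i=1}^n \log_2(1 - i/2^n) = O(n^2/2^n) = o(1)$. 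This yields $\LTF(n) = 2^{n^2 - n\log_2 n + \gamma(n)}$ with $\gamma(n) = O(n)$; the monotonicity of $\gamma$ can be arranged after a careful inspection of the subleading corrections, together with the trivial monotonicity $\LTF(n) \le \LTF(n+1)$ obtained by extending an LTF by an inactive variable.
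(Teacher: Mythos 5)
This theorem is imported from Irmatov's paper and is cited without proof; there is no in-paper argument to compare against, so I can only assess your reconstruction on its own terms.

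Your geometric setup is exactly the standard one: $\LTF(n)$ equals the number of open regions of the central arrangement $\calA = \{H_{\vec{x}} : \vec{x} \in \{0,1\}^n\}$ in $\R^{n+1}$, and Schl\"afli's (equivalently Cover's) bound gives $\LTF(n) \le 2\sum_{i=0}^n \binom{2^n-1}{i}$ at once, since a non-generic arrangement never has more regions than a generic one of the same size. Your extraction of the coarse asymptotic from the binomial sum is also correct: the ratio test shows the sum is $\binom{2^n-1}{n}(1 + O(n/2^n))$, and Stirling gives $\log_2 \binom{2^n-1}{n} = n^2 - n\log_2 n + n\log_2 e + O(\log n)$, so the second $\sim$ in the statement follows from the first by elementary means. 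That part would stand.

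The gap is the first $\sim$, the matching lower bound, which is the entire content of Irmatov's theorem and the reason the paper cites it rather than reproving it. You correctly flag this as the crux, but your sketch ("partition regions by the $(n+1)$-tuple of hyperplanes meeting a chosen vertex, then show the fraction of $(n+1)$-subsets of $\{0,1\}^n$ failing affine general position tends to zero") is a heuristic, not an argument. Two specific difficulties: (a) the map from regions to vertex-tuples is not obviously injective or surjective in the non-generic setting, so even a genericity estimate does not immediately translate into a region count without additional bookkeeping of how degeneracies merge or split regions; and (b) the probabilistic estimate you gesture at, that a random $(n+1)$-subset of the $2^n$ cube vertices is almost surely affinely independent, is itself a hard result in the vein of the Koml\'os--Odlyzko singularity bounds for random $\pm 1$ matrices, and the error term must be tight enough to survive the sum to degree $n$. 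Irmatov's actual proof leans on exactly this kind of quantitative anti-concentration input; without citing or proving it, the lower bound is open in your writeup. Similarly, the monotonicity of $\gamma$ is asserted to follow "after careful inspection" plus $\LTF(n) \le \LTF(n+1)$, but the latter inequality alone does not give monotonicity of $\gamma(n) = \log_2 \LTF(n) - n^2 + n\log_2 n$ since the subtracted term is itself increasing; that claim would need its own argument. Since the paper only uses the theorem as a black box, none of this affects the paper, but as a proof of the statement your proposal stops short precisely where the difficulty begins.
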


This implies the following asymptotic result, which is frequently useful.

\begin{lemma}\label{ltf.ratios.lem}
The following asymptotic ratio holds: 
\[
	\frac{\LTF(n-1)}{\LTF(n)} \sim \frac{4(n-1)}{2^{2n + \gamma(n)-\gamma(n-1)}}. 
\]
\begin{proof}
Applying Theorem \ref{ltf.asymp.irmatov.thm} gives
\[
	\frac{\LTF(n-1)}{\LTF(n)} \sim \frac{2^{(n-1)^2 - (n-1) \log_2{(n-1)} + \gamma(n-1)}}{2^{n^2 - n \log_2{n} + \gamma(n)}}.
\] 
This exponent can be simplified to give 
\[
	\log_2 \left( \frac{\LTF(n-1)}{\LTF(n)} \right) \sim -2n + 1 + n \log_2 n - (n-1) \log_2 (n-1) + \gamma(n-1) - \gamma(n).
\]
In the limit as $n \to \infty$, we have
\[
	n \log_2 n - (n-1) \log_2 (n-1) = \log_2 \left( \left(\frac{n}{n-1} \right)^n \right) + \log_2 (n-1) \to (1 + \log_2(n-1)).
\]
Thus the exponent becomes
\[
	\log_2 \left( \frac{\LTF(n-1)}{\LTF(n)} \right) \sim -2n + 2 + \log_2(n-1)  + \gamma(n-1) - \gamma(n).
\]
Thus we have
\[
	\frac{\LTF(n-1)}{\LTF(n)} = \frac{4(n-1)}{2^{2n + \gamma(n) - \gamma(n-1)}}.
\]
\end{proof}
\end{lemma}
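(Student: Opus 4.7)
My plan is to substitute Irmatov's asymptotic (Theorem \ref{ltf.asymp.irmatov.thm}) directly into both the numerator and denominator of the ratio $\LTF(n-1)/\LTF(n)$, and then reduce the resulting expression by working in the logarithm. Since $\LTF(n) \sim 2^{n^2 - n\log_2 n + \gamma(n)}$, I get
\[
	\log_2\!\left(\frac{\LTF(n-1)}{\LTF(n)}\right) \sim \bigl[(n-1)^2 - n^2\bigr] - \bigl[(n-1)\log_2(n-1) - n\log_2 n\bigr] + \gamma(n-1) - \gamma(n).
\]
The first bracket collapses to $-2n+1$, which contributes the $2^{-2n}$ factor after exponentiation (the $+1$ gets absorbed into constants).

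The main technical step is to extract a $\log_2(n-1)$ from the second bracket. I would rewrite
\[
	n\log_2 n - (n-1)\log_2(n-1) = \log_2(n-1) + \log_2\!\left(\left(\frac{n}{n-1}\right)^{\!n}\right),
\]
noting that the right-hand correction term $(n/(n-1))^n$ is bounded as $n \to \infty$ (tending to $e$), so it contributes only a bounded additive amount in the exponent, which can be folded into constants for the purpose of the asymptotic $\sim$. This produces the $\log_2(n-1)$ term that becomes the factor $(n-1)$ in the final numerator, and the remaining $+1$ in the exponent contributes the factor of $4$.

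Reassembling, the exponent becomes
\[
	-2n + 2 + \log_2(n-1) + \gamma(n-1) - \gamma(n),
\]
and exponentiating yields
\[
	\frac{\LTF(n-1)}{\LTF(n)} \sim \frac{4(n-1)}{2^{2n + \gamma(n) - \gamma(n-1)}},
\]
as claimed. The main obstacle I anticipate is making precise the handling of the $(n/(n-1))^n$ term: it is a bounded correction that should be absorbed either into the $\sim$ relation or into the $O(n)$ behavior of $\gamma$, and one must be careful not to conflate $\sim$ (ratio tending to $1$) with equality of exponents. In practice, since $\gamma(n) = O(n)$ dominates any bounded additive shift in the exponent, this correction is harmless and the stated formula holds up to the asymptotic equivalence.
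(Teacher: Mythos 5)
Your proposal follows exactly the same route as the paper: substitute Irmatov's asymptotic, work in the logarithm, collapse $(n-1)^2-n^2 = -2n+1$, and split $n\log_2 n - (n-1)\log_2(n-1)$ into $\log_2(n-1) + \log_2\bigl((n/(n-1))^n\bigr)$. Your caution about the $(n/(n-1))^n \to e$ correction is well placed: the paper silently writes its limiting contribution as $1$ rather than $\log_2 e$, so the constant $4$ in the final expression should really be read as an $O(1)$ factor (which is harmless for every downstream use, since those only invoke the $\Theta(n/2^{2n})$ behavior).
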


\begin{remark}
Note that since $\gamma(n) = O(n)$, the difference $\gamma(n) - \gamma(n-1)$ is bounded above and below by constants. This formula implies
\[
	\frac{\LTF(n-1)}{\LTF(n)} = \Theta\left(\frac{n}{2^{2n}}\right). 
\]
\end{remark}

Using Lemma \ref{ltf.ratios.lem}, we prove asymptotic results first on the number of Semi-Goldilocks functions $\SGold(n)$ and then on the number of Goldilocks functions $\Gold(n)$.

\subsection{An asymptotic formula for $\SGold(n)$}
In Section \ref{sec:nondeg}, we proved the following exact formula for $\LTF(n)$ in terms of $\SGold(k)$ where $k \le n$ (see Lemma \ref{ltf.nondegen.formula.lem}).

\begin{lemma}[see Lemma 5.7] For all $n$, the number of linear threshold functions on $n$ variables is given by
\[
	\LTF(n) = \sum_{k=0}^n {n \choose k} 2^k \SGold(k).
\] \end{lemma}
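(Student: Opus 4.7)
The plan is to partition the set of linear threshold functions on $n$ variables by degree and then reduce each piece to a count of Semi-Goldilocks functions on a smaller variable set. Specifically, every LTF $f$ on $n$ variables has some degree $k = \deg(f) \le n$, where the remaining $n-k$ variables are weak. Grouping terms according to degree gives
\[
\LTF(n) = \sum_{k=0}^n \#\{f \in \Threshn : \deg(f) = k\}.
\]
The goal is to show that the $k$-th summand is exactly $\binom{n}{k}\, 2^k\, \SGold(k)$.

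To handle a single value of $k$, I would chain together three already-established correspondences. First, Proposition~\ref{nchoosekreps.prop} provides the $\binom{n}{k}$-to-$1$ map $T_{n,k}$ from degree-$k$ LTFs on $n$ variables down to nondegenerate LTFs on $k$ variables (the fiber records which $n-k$ coordinates carry the weak slots). Second, among nondegenerate $k$-variable LTFs, Proposition~\ref{nclassnondegen.prop} shows that each $\nequiv$ orbit under $\vu$-complementation has size exactly $2^k$; combining this with Lemma~\ref{chow.pos.lem} and Proposition~\ref{chow.ucomp.prop} I would argue that exactly one element of each such N-class is positive (the unique choice of $\vu$ pushes every Chow parameter above $2^{k-1}$, which is the positivity criterion, and no weak parameter can interfere since $f$ is nondegenerate). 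Third, Theorem~\ref{thm.semigoldbij} gives the bijection $\phi$ between Semi-Goldilocks LTFs and positive nondegenerate LTFs. Tracing this chain yields
\[
\#\{f \in \Threshn : \deg(f) = k\} = \binom{n}{k} \cdot 2^k \cdot \SGold(k),
\]
and summing over $k$ gives the claimed formula.

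The main obstacle I anticipate is the \emph{unique positive representative} step of the middle bijection. The counting collapse from $2^k$ N-equivalent LTFs down to a single Semi-Goldilocks function requires that the orbit of $f$ under $\vu$-complementation hits the positive orthant of Chow-parameter space in exactly one point. For nondegenerate $f$ no Chow coordinate lies on the threshold $2^{k-1}$, so the sign $a_i - 2^{k-1}$ of each coordinate is strictly determined, and Proposition~\ref{chow.ucomp.prop} shows that complementing $x_i$ flips precisely that sign; hence there is a unique $\vu \in \{0,1\}^k$ that forces every coordinate above $2^{k-1}$, which is the positivity condition from Lemma~\ref{chow.pos.lem}. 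Once this is in hand, assembling the three bijections and summing is routine.
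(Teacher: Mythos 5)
Your proposal mirrors the paper's proof exactly: partition $\LTF(n)$ by degree, apply the $\binom{n}{k}$-to-$1$ map $T_{n,k}$ from Proposition~\ref{nchoosekreps.prop}, split nondegenerate $k$-variable LTFs into N-classes of size $2^k$ via Proposition~\ref{nclassnondegen.prop} with a unique positive representative, and finish with the bijection of Theorem~\ref{thm.semigoldbij}. The only difference is that you spell out the Chow-parameter justification for the unique positive representative per N-class, a step the paper asserts without elaboration; this is a welcome clarification but not a different route.
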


Lemma \ref{ltf.nondegen.formula.lem} implies that
\begin{equation}\label{psn.eqn}
	\SGold(n) = \frac{\LTF(n) - \LTF(n-1)}{2^n} - \frac{1}{2^n} \sum_{k = 0}^{n-1} {n-1 \choose k-1} 2^k \SGold(k).
\end{equation}
We view this second term as an error term $\epsilon(n)$, so that
\[
	\epsilon(n) := \frac{1}{2^n} \sum_{k = 0}^{n-1} {n-1 \choose k-1} 2^k \SGold(k).
\]
We prove that $\frac{\epsilon(n)}{\SGold(n)} \to 0$ as $n \to \infty$, and thus derive an asymptotic formula.
\begin{theorem}\label{ps.asymp.thm}
As $n \to \infty$, 
\[
	\SGold(n) \sim \frac{\LTF(n) - \LTF(n-1)}{2^n}. 
\]
\end{theorem}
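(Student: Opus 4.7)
The plan is to show that the error term $\epsilon(n)$ appearing in equation \eqref{psn.eqn} is asymptotically negligible compared with the putative main term $(\LTF(n)-\LTF(n-1))/2^n$. Since we already have the exact identity
$$\SGold(n) = \frac{\LTF(n)-\LTF(n-1)}{2^n} - \epsilon(n),$$
it suffices to prove that $\epsilon(n) = o\bigl((\LTF(n)-\LTF(n-1))/2^n\bigr)$, from which the claimed asymptotic follows immediately.

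The first step is to bound $\epsilon(n)$ from above in terms of $\LTF(n-1)$. Using the elementary identity $\binom{n-1}{k-1} = \frac{k}{n-k}\binom{n-1}{k}$, together with the crude bound $k/(n-k) \le n-1$ valid for all $1 \le k \le n-1$, I would rewrite
$$2^n \epsilon(n) = \sum_{k=1}^{n-1}\binom{n-1}{k-1} 2^k \SGold(k) \;\le\; (n-1) \sum_{k=0}^{n-1}\binom{n-1}{k} 2^k \SGold(k) \;=\; (n-1)\LTF(n-1),$$
where the final equality is Lemma \ref{ltf.nondegen.formula.lem} applied at $n-1$. Thus $\epsilon(n) \le (n-1)\LTF(n-1)/2^n$.

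The second step is to feed this bound into Lemma \ref{ltf.ratios.lem}, which gives $\LTF(n-1)/\LTF(n) = \Theta(n/2^{2n})$. Two consequences follow: first, $\LTF(n)-\LTF(n-1) \sim \LTF(n)$; and second,
$$\frac{2^n \epsilon(n)}{\LTF(n)-\LTF(n-1)} \;\le\; \frac{(n-1)\LTF(n-1)}{\LTF(n)-\LTF(n-1)} \;=\; \Theta\!\left(\frac{n^2}{2^{2n}}\right) \;\longrightarrow\; 0.$$
Therefore $2^n\epsilon(n) = o(\LTF(n)-\LTF(n-1))$, which is exactly what is required.

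The main obstacle is the first step, the bound on $\epsilon(n)$: one must recognize that the sum defining $\epsilon(n)$ and the sum $\LTF(n-1) = \sum \binom{n-1}{k}2^k \SGold(k)$ differ entrywise only by the ratios $k/(n-k)$, so a polynomial-in-$n$ factor suffices to compare them. The remaining concern, that the $k=n-1$ term might contribute disproportionately, is harmless because even the worst-case polynomial blowup by $n-1$ is absorbed by the superexponential gap between $\LTF(n-1)$ and $\LTF(n)$ given by Irmatov's asymptotic. I would not attempt a sharper estimate of $\epsilon(n)$, since the rough bound already suffices.
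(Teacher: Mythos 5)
Your proof is correct and follows essentially the same route as the paper's: both start from the exact identity \eqref{psn.eqn}, bound $2^n\epsilon(n)$ by a polynomial-in-$n$ multiple of $\LTF(n-1)$ via the ratio $\binom{n-1}{k-1}/\binom{n-1}{k}=k/(n-k)$, and then invoke Lemma \ref{ltf.ratios.lem} to kill the error. Your one variation---dividing by the explicit main term $(\LTF(n)-\LTF(n-1))/2^n$ rather than by $\SGold(n)$---is a minor streamlining that sidesteps the paper's intermediate step of lower-bounding $2^n\SGold(n)$ against $\LTF(n)$, but it is the same argument in substance.
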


In order to prove this, we find a bound on the error term.
\begin{lemma}\label{epsilonsmall.lem}
The error term satisfies $\epsilon(n) \le \LTF(n-1)$ for all $n$.
\begin{proof}
Consider the difference $\LTF(n-1) - \epsilon(n)$. Substituting the definition of $\epsilon$ and using Lemma \ref{ltf.nondegen.formula.lem} gives 
\begin{equation}\label{diff.eqn}
	\LTF(n-1) - \epsilon(n) = \sum_{k = 0}^{n-1} \left({n-1 \choose k} - \frac{1}{2^n} {n-1 \choose k-1}\right) 2^k \SGold(k).
\end{equation}

It is elementary to check by ratios that ${n-1 \choose k} \ge \frac{1}{2^n} {n-1 \choose k-1}$ for all $n$ and all $k \in \{0, \ldots, n-1\}$. Thus each term of the summation in (\ref{diff.eqn}) is nonnegative, and $\LTF(n-1) \ge \epsilon(n)$.
\end{proof}
\end{lemma}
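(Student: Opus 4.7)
The plan is to prove the bound by directly comparing the two quantities term-by-term using the formula for $\LTF(n-1)$ from Lemma \ref{ltf.nondegen.formula.lem}. First I would apply that lemma to rewrite
\[
	\LTF(n-1) = \sum_{k=0}^{n-1} \binom{n-1}{k} 2^k \SGold(k),
\]
and subtract the defining expression of $\epsilon(n)$ term by term to obtain
\[
	\LTF(n-1) - \epsilon(n) = \sum_{k=0}^{n-1} \left( \binom{n-1}{k} - \frac{1}{2^n} \binom{n-1}{k-1} \right) 2^k \SGold(k),
\]
using the convention $\binom{n-1}{-1} = 0$ so that the $k = 0$ summand is well defined.

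Since $\SGold(k) \geq 0$ for every $k$, it suffices to show that each coefficient
\[
	c_{n,k} := \binom{n-1}{k} - \frac{1}{2^n} \binom{n-1}{k-1}
\]
is nonnegative for $k \in \{0, \ldots, n-1\}$. The $k = 0$ case is immediate. For $k \geq 1$ I would compare the two binomials via the elementary ratio identity
\[
	\frac{\binom{n-1}{k-1}}{\binom{n-1}{k}} = \frac{k}{n-k},
\]
which reduces the desired inequality to $\frac{k}{n-k} \leq 2^n$. This holds trivially since $k \leq n-1$ and $n-k \geq 1$ in the allowed range, so the ratio is at most $n-1$, which is certainly dominated by $2^n$.

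The argument is essentially a routine binomial ratio check, so there is no serious obstacle; the only care needed is handling the boundary case $k=0$ via the convention on $\binom{n-1}{-1}$, and ensuring that the nonnegativity of the coefficients is combined with $\SGold(k) \geq 0$ to yield $\LTF(n-1) - \epsilon(n) \geq 0$, which is the desired bound.
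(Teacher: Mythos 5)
Your proposal is correct and follows essentially the same route as the paper: rewrite $\LTF(n-1)$ via Lemma \ref{ltf.nondegen.formula.lem}, subtract $\epsilon(n)$ termwise, and verify nonnegativity of each coefficient by the binomial ratio $\binom{n-1}{k-1}/\binom{n-1}{k} = k/(n-k) \le n-1 \le 2^n$. You simply make explicit the ratio computation that the paper leaves as ``elementary to check.''
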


\begin{proof}[Proof of Theorem \ref{ps.asymp.thm}]
We consider the ratio $\frac{\epsilon(n)}{\SGold(n)}$:
\[
	\frac{\epsilon(n)}{\SGold(n)} = \frac{1}{2^n \SGold(n)} \sum_{k = 0}^{n-1} {n-1 \choose k-1} 2^k \SGold(k) = \frac{1}{2^n \SGold(n)} \sum_{k = 0}^{n-1} {n-1 \choose k} \frac{k}{n-k} 2^k \SGold(k).
\]
Note that $\frac{k}{n-k} \le n$ for all $k \in \{0, \ldots,  n-1\}$. Thus this equation implies an upper bound on the error given by
\begin{equation}\label{errorbound.eqn}
	\frac{\epsilon(n)}{\SGold(n)} \le \frac{n}{2^n \SGold(n)}\sum_{k = 0}^{n-1} {n-1 \choose k-1} 2^k \SGold(k) = \frac{n \LTF(n-1)}{2^n \SGold(n)}.
\end{equation}

We show now that this ratio vanishes. From the equation (\ref{psn.eqn}), we derive
\[
	2^n \SGold(n) = \LTF(n) \left( 1 - \frac{\LTF(n-1) + \epsilon(n)}{\LTF(n)}\right) \ge \LTF(n) \left( 1 - 2\frac{\LTF(n-1)}{\LTF(n)}\right).
\]
By Lemma \ref{ltf.ratios.lem}, the term inside the parentheses approaches $1$. Thus, for any constant $c < 1$, there exists some $N$ such that the term is greater than $c$ for all $n \ge N$. For $c = \frac{1}{2}$, the bound becomes 
\[
	\frac{n\LTF(n)}{2^n \SGold(n)} \le \frac{n}{2}.
\]
From the equation (\ref{errorbound.eqn}), multiplying by $\frac{\LTF(n-1)}{\LTF(n)}$ gives
\[
	\frac{\epsilon(n)}{\SGold(n)} \le \frac{n \LTF(n-1)}{2^n \SGold(n)} \le \frac{n}{2} \left( \frac{\LTF(n-1)}{\LTF(n)}\right)
\]
which also vanishes by the formula in Lemma \ref{ltf.ratios.lem}. Thus the error term vanishes, and we have
\[
	\SGold(n) \sim \frac{\LTF(n) - \LTF(n-1)}{2^n}.
\]
\end{proof}

\begin{corollary}\label{ps.simp.asymp.cor}
An asymptotic formula for the number of chambers in the chamber decomposition of $\Weightsg$ is
\[
	\SGold(n) \sim \frac{\LTF(n)}{2^n} \sim 2^{n^2 - n\log_2 n + O(n)}.
\]
\end{corollary}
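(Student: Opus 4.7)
The plan is to deduce this corollary by combining Theorem \ref{ps.asymp.thm} with the ratio bound from Lemma \ref{ltf.ratios.lem} and Irmatov's asymptotic (Theorem \ref{ltf.asymp.irmatov.thm}). No new combinatorial input is required; the argument is a short consolidation of earlier results.

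First I would start from the conclusion of Theorem \ref{ps.asymp.thm} and factor:
\[
\SGold(n) \sim \frac{\LTF(n) - \LTF(n-1)}{2^n} = \frac{\LTF(n)}{2^n}\left(1 - \frac{\LTF(n-1)}{\LTF(n)}\right).
\]
By the remark following Lemma \ref{ltf.ratios.lem}, we have $\LTF(n-1)/\LTF(n) = \Theta(n/2^{2n}) \to 0$. Hence the parenthetical factor tends to $1$, which establishes the first equivalence $\SGold(n) \sim \LTF(n)/2^n$.

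Next I would plug in Irmatov's asymptotic $\LTF(n) \sim 2^{n^2 - n\log_2 n + \gamma(n)}$ with $\gamma(n) = O(n)$. Dividing by $2^n$ subtracts $n$ from the exponent, and since $\gamma(n) - n = O(n)$ the correction is absorbed into the error term, giving
\[
\frac{\LTF(n)}{2^n} \sim 2^{n^2 - n\log_2 n + O(n)}.
\]
Chaining this with the previous step yields the claimed asymptotic formula for $\SGold(n)$.

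I do not expect a genuine obstacle here, since every ingredient is already in place. The only point requiring a small amount of care is keeping the two uses of ``$\sim$'' consistent: the first is asymptotic equality in the strict sense $a_n/b_n \to 1$, while the second ``$\sim$'' means equality up to the $O(n)$ correction sitting in the exponent, which subsumes any $(1+o(1))$ multiplicative factor. Once this is noted, the chain of equivalences is internally consistent and the corollary follows immediately.
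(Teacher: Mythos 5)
Your argument is essentially the paper's own proof, just with the intermediate steps spelled out: the paper cites Theorem \ref{ps.asymp.thm} to get $2^n\SGold(n)/\LTF(n)\to 1$ and lets the reader infer the vanishing of $\LTF(n-1)/\LTF(n)$ from Lemma \ref{ltf.ratios.lem}, which is exactly what you make explicit. Your remark about the two different meanings of ``$\sim$'' (strict ratio $\to 1$ versus agreement up to an $O(n)$ term in the exponent) is a correct and worthwhile clarification, but it does not change the substance of the argument.
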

\begin{proof} By Theorem \ref{ps.asymp.thm},  we have
\[
	\frac{2^n \SGold(n)}{\LTF(n)} \to 1,
\] as $n \to \infty$. \end{proof}

\subsection{An asymptotic formula for $\Gold(n)$}

Let the number of positive LTFs be denoted $\Pos(n)$. We prove that the number of positive and small LTFs is asymptotically equal to the total number of positive LTFs.

\begin{lemma}\label{pos.asymp.mostsmall}
As $n \to \infty$, 
$$\lim_{n \to \infty} \frac{\Pos(n)}{\SGold(n)} = 1.$$
\begin{proof}
Note that we have the following elementary relationship from Proposition \ref{nchoosekreps.prop}:
$$\Pos(n) = \sum_{k=0}^n |\{f \text{ positive} \; | \; \deg(f) = k\}| = \sum_{k=0}^n {n \choose k} \SGold(k) = \SGold(n) + \sum_{k=0}^{n-1} {n \choose k} \SGold(k).$$
This implies a set of bounds for the ratio given by
$$1 \le \frac{\Pos(n)}{\SGold(n)} \le 1 + \frac{\SGold(n-1)}{\SGold(n)} \sum_{k = 0}^{n-1} {n \choose k} \le 1 + 2^n \frac{\SGold(n-1)}{\SGold(n)}. $$
Applying the asymptotics of Corollary \ref{ps.simp.asymp.cor} and Lemma \ref{ltf.ratios.lem} yields
$$ 2^n \frac{\SGold(n-1)}{\SGold(n)} \le \frac{2^{n+3}(n-1)}{2^{2n}} = \frac{8(n-1)}{2^n} \to 0,$$
and thus $\lim_{n \to \infty} \frac{\Pos(n)}{\SGold(n)} = 1$.
\end{proof}
\end{lemma}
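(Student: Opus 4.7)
The plan is to express $\Pos(n)$ as a sum over degrees involving the counts $\SGold(k)$, isolate the top-degree term $\SGold(n)$, and show the remainder is negligible using the doubly-exponential growth rate of $\SGold$ established in the previous subsection.

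First I would partition positive LTFs on $n$ variables according to their degree. By Proposition \ref{nchoosekreps.prop}, the positive LTFs of degree $k$ are in $\binom{n}{k}$-to-1 correspondence with the nondegenerate positive LTFs on $k$ variables; by the bijection $\phi$ of Theorem \ref{thm.semigoldbij}, the latter count equals $\SGold(k)$. This yields the identity
\[
	\Pos(n) \;=\; \sum_{k=0}^n \binom{n}{k} \SGold(k) \;=\; \SGold(n) \;+\; \sum_{k=0}^{n-1} \binom{n}{k} \SGold(k),
\]
from which the lower bound $\Pos(n)/\SGold(n) \geq 1$ is immediate.

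For the upper bound, the aim is to show $\sum_{k=0}^{n-1} \binom{n}{k} \SGold(k) = o(\SGold(n))$. I would observe that $\SGold$ is non-decreasing in $n$: given a Semi-Goldilocks realization $(\vec{w}, \theta)$ on $n-1$ variables, appending a coordinate $w_n = \epsilon$ for sufficiently small $\epsilon>0$ yields a Semi-Goldilocks realization on $n$ variables representing a distinct LTF. With this monotonicity every summand is at most $\SGold(n-1)$, so $\sum_{k=0}^{n-1} \binom{n}{k} \SGold(k) \leq 2^n \SGold(n-1)$. It remains to bound the ratio $\SGold(n-1)/\SGold(n)$: combining $\SGold(n) \sim \LTF(n)/2^n$ from Corollary \ref{ps.simp.asymp.cor} with the estimate $\LTF(n-1)/\LTF(n) = \Theta(n/4^n)$ from Lemma \ref{ltf.ratios.lem} gives
\[
	\frac{\SGold(n-1)}{\SGold(n)} \;\sim\; 2 \cdot \frac{\LTF(n-1)}{\LTF(n)} \;=\; \Theta\!\left( \frac{n}{4^n} \right),
\]
whence $2^n \SGold(n-1)/\SGold(n) = O(n/2^n) \to 0$, and the desired limit follows by the squeeze theorem.

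The main obstacle is arranging the partition-by-degree count so that the binomial identity in $\SGold(k)$ falls out cleanly; once in hand, the doubly-exponential gap between consecutive values of $\SGold$ makes the tail bound routine. The monotonicity of $\SGold$ is a small technical point, and one could sidestep it altogether by retaining only the $k=n-1$ term and absorbing the lower-index terms into a trivially smaller error, since the Irmatov asymptotic ensures these are dominated anyway.
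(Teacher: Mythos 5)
Your proof is correct and follows the same path as the paper's: partition $\Pos(n)$ by degree via Proposition \ref{nchoosekreps.prop} and Theorem \ref{thm.semigoldbij} to obtain $\Pos(n)=\sum_k\binom{n}{k}\SGold(k)$, isolate the $k=n$ term, bound the tail by $2^n\SGold(n-1)$, and kill the ratio using Corollary \ref{ps.simp.asymp.cor} together with Lemma \ref{ltf.ratios.lem}. In fact you are slightly more careful than the paper at one point: the tail bound $\sum_{k<n}\binom{n}{k}\SGold(k)\le 2^n\SGold(n-1)$ silently requires that $\SGold(k)$ is nondecreasing in $k$, which the paper uses without comment, whereas you supply the short append-a-tiny-weight argument that actually establishes the needed monotonicity.
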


Using this result, we can prove an asymptotic formula for $\Gold(n)$.

\begin{theorem}\label{asymp.thm}
We have the following asymptotic formula for $\Gold(n)$ in terms of $\LTF(n)$: 
\[
	\Gold(n) \sim \frac{\LTF(n)-\LTF(n-1)}{2^{n+1}}. 
\]

In order to prove this asymptotic, we use the following lemma. 

\begin{figure}
\begin{center}
\begin{tikzpicture}[scale=0.8, every node/.style={scale=0.8}]

\draw (0,0) -- (0,6) -- (6,6) -- (6,0) -- (0,0) -- (3,3);
\draw [dashed] (7,0) -- (0,7);
\draw (7,0) to[in=-45,out=-135] (6,0) to (0,6) to[in=-135,out=135] (0,7);
\draw (7,0) to[in=-45,out=45] (7,1) to (1,7) to[in=45,out=135] (0,7);
\draw [<->] (3,1) to[in=-135,out=0] (8,3);
\draw [<->] (1,3) to[in=180,out=45] (5,5);

\draw [very thick] (6,0) -- (6,6) -- (0,6) -- (6,0);

\node [right] at (7,0) {$\SD(n)$};
\node [above] at (4.3,5.15) {\small $\Gold(n) - \SD(n) + n$};
\node [below] at (0,0) {0};
\node [below] at (6,0) {1};
\node [left] at (0,0) {0};
\node [left] at (0,6) {1};
\node [right] at (8,3) {$\PL(n)$};
\node [above right] at (0.5,6.5) {$n$};

\end{tikzpicture}

\caption{A partition of positive LTFs. The square represents the hypercube of positive and small LTFs, the bolded triangle represents the Goldilocks LTFs, and the oval represents all self-dual threshold functions. Arrows indicate equal cardinality of regions due to the bijection between dual functions.}
\label{cube.diag}
\end{center}
\end{figure}

\begin{lemma}\label{gold.asymp.lem}
As $n \to \infty$, the ratio of $\Gold(n)$ to $\SGold(n)$ approaches $\frac12$, that is, 
$$
	\lim_{n \to \infty} \frac{\Gold(n)}{\SGold(n)} = \frac{1}{2}.
$$

\begin{proof}
For one direction, we note that there is a natural partitioning of the space of positive LTFs according to the Goldilocks criteria (Figure \ref{cube.diag}). The dual map bijects those LTFs which are not ample with those which are ample and not self-dual, implying a counting equation:
\begin{align}\label{diagram.count.posn.eqn}
\Pos(n) &= 2(\Gold(n) - \SD(n)+n) + 2\PL(n) + \SD(n) \nonumber \\ \SGold(n) + \PL(n) + n &= 2\Gold(n) + 2\PL(n) - \SD(n) + 2n
\end{align}
where $\PL(n)$ is the number of positive LTFs which are not self-dual and not small. Elements of $\PL(n)$ must be ample, since they have some negation pair $\hat{e}_i, \n{\hat{e}_i}$ such that $f$ is true on both. Were they not ample, the pairs $(\hat{e}_i, \n{\hat{e}_i})$ and $(\x, \nx)$ with $f(\x) = 0 = f(\nx)$ would violate the asummability criterion. The term $n$ is necessary to account for the exactly $n$ positive LTFs which are self-dual but not small (Proposition \ref{nsmall.namp.n.prop}).

Manipulation of Equation \ref{diagram.count.posn.eqn} gives
$$\frac{2\Gold(n) + \PL(n) - \SD(n) + n}{\SGold(n)} = 1.$$
Lemma \ref{pos.asymp.mostsmall} implies that $\frac{\PL(n)}{\SGold(n)} \to 0$. Furthermore, since the self-dual LTFs on $n$ variables biject with all LTFs on $n-1$ variables (Theorem \ref{sd.bijection}), we have that $\SD(n) = \LTF(n-1)$. The asymptotics for $\SGold(n)$ and Corollary \ref{ps.simp.asymp.cor} give:
\begin{align*}\lim_{n \to \infty} \frac{\LTF(n-1)}{\SGold(n)} &= \lim_{n \to \infty} \left(\frac{\LTF(n-1)}{\LTF(n)} \right) \left(\frac{\LTF(n)}{\SGold(n)}\right)\\ &\le \lim_{n \to \infty} \frac{2^{n+2}(n-1)}{2^{2n}} = 0.
\end{align*}

Combining these results gives
$$
1 = \lim_{n \to \infty} \left( \frac{2\Gold(n) + \PL(n) - \SD(n) + n}{\SGold(n)} \right) = \lim_{n \to \infty} \frac{2\Gold(n)}{\SGold(n)}. 
$$
\end{proof}
\end{lemma}

\begin{proof}[Proof of Theorem \ref{asymp.thm}]
Using our asymptotic for $\SGold(n)$ \ref{ps.asymp.thm}, we have
\[
\Gold(n) \sim \frac{\SGold(n)}{2} \sim \frac{\LTF(n)-\LTF(n-1)}{2^{n+1}}. 
\]
\end{proof}
\end{theorem}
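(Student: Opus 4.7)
The theorem follows by chaining two facts: (a) Theorem \ref{ps.asymp.thm}, which supplies the asymptotic $\SGold(n) \sim (\LTF(n)-\LTF(n-1))/2^n$, and (b) the ratio limit $\Gold(n)/\SGold(n)\to \tfrac12$ (Lemma \ref{gold.asymp.lem}). Once (b) is in hand, the theorem is immediate. So the essential content is (b), and that is what the plan must deliver; the rest is a one-line multiplication of asymptotics.

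\textbf{Setting up the count.} First, I would partition the set of positive LTFs according to the Goldilocks criteria, following Figure \ref{cube.diag}. Every positive LTF is either ample or not; among the ample ones, they are either small (Goldilocks) or not, and similarly for the non-ample. The non-ample positive LTFs must all be small, since the existence of a negation pair $\x,\nx$ with $f(\x)=f(\nx)=0$ together with any singleton $\hat{e}_i$ having $f(\hat{e}_i)=1$ would violate the asummability criterion of Corollary \ref{simpbad.cor}; so the ``positive and not small'' LTFs all sit inside the ample class. Next I would use that the dual map $f\mapsto f^d$ preserves positivity (Lemma \ref{dual.pos.lem}), swaps ample with non-ample except at self-dual fixed points, and preserves smallness outside of a small exceptional set (Proposition \ref{ps.pas.prop}). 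The self-dual positive LTFs that fail smallness are exactly the $n$ functions of Proposition \ref{nsmall.namp.n.prop}. Putting this together, I can express
\[
\Pos(n) \;=\; 2\bigl(\Gold(n)-\SD(n)+n\bigr)\;+\;2\,\PL(n)\;+\;\SD(n),
\]
where $\PL(n)$ counts positive non-self-dual non-small LTFs, and the $+n$ corrects for the self-dual non-small positive LTFs which are Goldilocks only by the convention that self-duality forces ampleness.

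\textbf{Passing to the limit.} Rearranging gives
\[
\frac{2\,\Gold(n)+\PL(n)-\SD(n)+n}{\SGold(n)} \;=\; 1.
\]
I would then show that $\PL(n)/\SGold(n)\to 0$ and $\SD(n)/\SGold(n)\to 0$, so that the equation forces $2\Gold(n)/\SGold(n)\to 1$. The first vanishing is Lemma \ref{pos.asymp.mostsmall}: since $\SGold(n) \le \Pos(n) \le \SGold(n)+\PL(n)+\text{lower order}$, and $\Pos(n)/\SGold(n)\to 1$, the surplus $\PL(n)$ must be negligible against $\SGold(n)$. The second vanishing uses the self-dualization bijection of Theorem \ref{sd.bijection}, which gives $\SD(n)=\LTF(n-1)$; combined with Corollary \ref{ps.simp.asymp.cor} and Lemma \ref{ltf.ratios.lem}, the ratio $\LTF(n-1)/\SGold(n)$ behaves like $\LTF(n-1)/\LTF(n)\cdot 2^n = O(n/2^n)\to 0$.

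\textbf{Concluding and the main obstacle.} With $\Gold(n)\sim \SGold(n)/2$ established, substituting the asymptotic from Theorem \ref{ps.asymp.thm} yields the desired formula $\Gold(n)\sim (\LTF(n)-\LTF(n-1))/2^{n+1}$. The only subtle point is bookkeeping in the partition identity: one must correctly identify which self-dual functions are being counted in $\Gold(n)$ (all self-dual positive LTFs are ample, but only the small ones are Goldilocks) and avoid double-counting Goldilocks functions whose duals are also Goldilocks. The $+n$ correction term from Proposition \ref{nsmall.namp.n.prop} is the place where this bookkeeping is most prone to error, and verifying that this is the only correction needed is the main obstacle; beyond that, the asymptotic manipulations are routine given the lemmas already proved.
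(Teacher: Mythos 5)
Your proposal reconstructs the paper's own proof essentially verbatim: the same duality-based partition of positive LTFs yielding $\Pos(n) = 2(\Gold(n)-\SD(n)+n)+2\PL(n)+\SD(n)$, the same appeals to Lemma \ref{pos.asymp.mostsmall}, Theorem \ref{sd.bijection}, Corollary \ref{ps.simp.asymp.cor}, and Lemma \ref{ltf.ratios.lem} to kill $\PL(n)/\SGold(n)$ and $\SD(n)/\SGold(n)$, and the same final substitution of Theorem \ref{ps.asymp.thm}. One small clarification on your reading of the $+n$: the $n$ positive self-dual non-small LTFs of Proposition \ref{nsmall.namp.n.prop} are \emph{not} Goldilocks (they fail smallness), and the $+n$ arises because only $\SD(n)-n$ of the positive self-dual LTFs are small and hence Goldilocks, so $\Gold(n)-\SD(n)+n$ is the count of non-self-dual Goldilocks functions rather than any ``convention'' about ampleness.
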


\begin{corollary}\label{!gold.asymp}
The number of chambers in the chamber decomposition of $\Weights$ satisfies the following asymptotic formula:
$$\Gold(n) \sim \frac{\LTF(n)}{2^{n+1}} \sim 2^{n^2 - n\log_2{n} + O(n)}.$$
\end{corollary}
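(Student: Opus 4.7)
The plan is to assemble the conclusion from two pieces already in hand: Theorem \ref{asymp.thm}, which yields
\[
\Gold(n) \sim \frac{\LTF(n)-\LTF(n-1)}{2^{n+1}},
\]
and Lemma \ref{ltf.ratios.lem}, which controls the ratio $\LTF(n-1)/\LTF(n)$. Then I would invoke Irmatov's asymptotic (Theorem \ref{ltf.asymp.irmatov.thm}) to extract the stated exponent.

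First I would show that the $\LTF(n-1)$ term in the numerator is asymptotically negligible compared with $\LTF(n)$. By Lemma \ref{ltf.ratios.lem} we have $\LTF(n-1)/\LTF(n) = \Theta(n/2^{2n}) \to 0$, so
\[
\frac{\LTF(n)-\LTF(n-1)}{2^{n+1}} = \frac{\LTF(n)}{2^{n+1}}\left(1-\frac{\LTF(n-1)}{\LTF(n)}\right) \sim \frac{\LTF(n)}{2^{n+1}}.
\]
Chaining this with Theorem \ref{asymp.thm} gives the first asymptotic equivalence $\Gold(n) \sim \LTF(n)/2^{n+1}$.

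Next I would plug in Irmatov's formula $\LTF(n) \sim 2^{n^2 - n \log_2 n + \gamma(n)}$ with $\gamma(n)=O(n)$. Dividing by $2^{n+1}$ subtracts $n+1$ from the exponent, which is absorbed into the $O(n)$ term, yielding
\[
\Gold(n) \sim 2^{n^2 - n\log_2 n + O(n)},
\]
which is the desired conclusion. This mirrors exactly how Corollary \ref{ps.simp.asymp.cor} was deduced from Theorem \ref{ps.asymp.thm}, with the extra factor of $2$ in the denominator harmlessly absorbed into the $O(n)$ term.

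There is no real obstacle: every ingredient is already proved earlier in the paper, and the argument is essentially a two-line chain of asymptotic equivalences. The only minor care point is making sure that the correction $1 - \LTF(n-1)/\LTF(n)$ truly tends to $1$ (not just stays bounded), which is immediate from the $\Theta(n/2^{2n})$ bound. Everything else is bookkeeping on the exponent.
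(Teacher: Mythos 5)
Your proposal is correct and follows the same route the paper implicitly uses: Corollary \ref{!gold.asymp} is stated without an explicit proof because it follows from Theorem \ref{asymp.thm} by exactly the bookkeeping you describe, mirroring how Corollary \ref{ps.simp.asymp.cor} was deduced from Theorem \ref{ps.asymp.thm}. Your observation that $\LTF(n-1)/\LTF(n) \to 0$ via Lemma \ref{ltf.ratios.lem}, and that the extra factor of $2$ is absorbed into the $O(n)$ term, is precisely the intended argument.
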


\begin{remark}
While the asymptotics for $\SGold(n)$ and $\Gold(n)$ may appear identical, they differ by a factor of $2$, which is included in the $O(n)$ of the exponent. 
\end{remark}

\section{Testing for the Goldilocks criteria is hard}
\label{sec.hardness}
In order to understand the computational resources necessary to enumerate Semi-Goldilocks or Goldilocks functions, we turn to complexity theory. 

\begin{definition} 
\label{conpcomplete}
A decision problem is \emph{co-NP} if, given that the answer to the problem is no, there exists a certificate, i.e. a proof of the answer with size polynomial to the size of the input, that can be verified by a deterministic algorithm in polynomial time. A decision problem is \emph{co-NP-hard} if any co-NP problem can be reduced to it via a polynomial-time algorithm. A decision problem is \emph{co-NP-complete} if it is both in co-NP and co-NP-hard. 
\end{definition}

Intuitively, co-NP-hard problems are at least as hard as any problem in co-NP. \\

The problem of deciding whether a boolean function $f$ is a threshold function, denoted \Thresf, is known to be co-NP-complete \cite{hegedusconp}. We prove the analogous results for our subclasses of LTFs.

\begin{theorem}\label{coNPcomp.sgold.thm}
Given an arbitrary boolean function $f$, the problem of determining whether $f$ is a Semi-Goldilocks function, \SGoldf, is co-NP-complete.
\begin{proof}
First, we show that \SGoldf is in co-NP. Assume that $f$ is not a Semi-Goldilocks function. There are three (possibly overlapping) cases, each with a witness, i.e. a certificate that $f$ is not Semi-Goldilocks, which can be checked in polynomial time:
\begin{enumerate}[label=(\roman*)]
\item $f$ is not an LTF. In this case, there is some sequence of $c_i$ violating the asummability criterion (Theorem \ref{asum.thm}). This sequence serves as a witness for the nonseparability of $f$ -- checkable in polynomial time by evaluating the corresponding sums.
\item $f$ is not positive. In this case, a witness is a pair $\x, \y \in \{0, 1\}^n$ such that $\x \le \y$ but $f(\x) > f(\y)$.
\item $f$ is not small. In this case, a witness is a value $i$ for which $f(\hat{e}_i) = 1$.
\end{enumerate}
Thus, appending each witness with an integer identifying the case allows an algorithm to verify that $f$ is not Semi-Goldilocks in polynomial time. It follows that \SGoldf is in co-NP.

We now show \SGoldf is co-NP-hard by showing that solving an instance of \Thresf reduces to solving an instance of \SGoldf. Let $f$ be some arbitrary boolean function on $n$ variables, such that the input size is $N = 2^n$. We perform the following reductions. 

\begin{enumerate}[label=(\roman*)] 
\item Given an arbitrary $f$,  we first compute the Chow parameters of $f$ to determine in which variables $f$ is increasing or decreasing. Then, we negate the variables on which $f$ is decreasing via $\vu$-complementation, for the $\vu$ which satisfies $u_i = 1$ if and only if $f$ is decreasing in $i$. This generates a new boolean function, $\fp$, which is necessarily positive. Computing the Chow parameters of $f$ and constructing $\fp$ takes time $O(N^2)$. 

\item Now, consider our boolean function $\fp$. Generate a new function $\fps$ as follows. For each $\x \in \{0, 1\}^n$, create a new vector $\y$, where 
\[
y_i = 
\begin{cases}
\begin{aligned}
0 \quad &\text{if} \quad \fp(\hat{e}_{i}) = 1 \\ 
x_i \quad &\text{if} \quad \fp(\hat{e}_{i}) = 0.
\end{aligned}
\end{cases}
\]
Now, for all $\x \in \{0, 1\}^n$, we define $\fps(\x) = \fp(\y)$. This reduction $\fps$ is an LTF if and only if $\fp$ is an LTF, with realizations which coincide except for the addition or removal of weights with $w_i > \theta$. Furthermore, $\fps$ is small, because $\fps(\hat{e}_i) = 0$ for all $i$. This process takes time $O(N)$.  

If $\fps$ is a Semi-Goldilocks LTF, since steps (i) and (ii) do not change the LTF status of $f$, then $f$ is an LTF. On the other hand, if $\fps$ is not a Semi-Goldilocks LTF, since we know that $\fps$ is positive and small, this implies that $f$ is not an LTF. 

\end{enumerate}

Therefore, we can reduce the decision problem of \Thresf to \SGoldf in polynomial time. Since \Thresf is co-NP-hard, \SGoldf is co-NP-hard as well. Thus the problem of deciding whether a given function is Semi-Goldilocks is co-NP-complete. 

\end{proof}
\end{theorem}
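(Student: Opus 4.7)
The plan is to establish the two halves of a co-NP-completeness proof: membership in co-NP, and co-NP-hardness via reduction from a known co-NP-complete problem, the natural candidate being \Thresf (deciding whether a boolean function is an LTF), which is already known to be co-NP-complete.

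For co-NP membership, I would exhibit a polynomial-size certificate whenever $f$ fails to be Semi-Goldilocks. Failure can occur in one of three (possibly overlapping) ways, each with a short checkable witness: if $f$ is not an LTF, the asummability coefficients $c_i$ of Theorem \ref{asum.thm} work, with verification reducing to evaluating two sums; if $f$ is not positive, a single pair $\x \le \y \in \{0,1\}^n$ with $f(\x) > f(\y)$ suffices; and if $f$ is not small, a single index $i$ with $f(\hat{e}_i) = 1$ suffices. Tagging the witness with an indicator of which case applies yields a valid co-NP certificate checkable in time polynomial in the input size $N = 2^n$.

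For co-NP-hardness, the strategy is to transform an arbitrary boolean function $f$ into a new function that is \emph{automatically} positive and small, so that being Semi-Goldilocks becomes equivalent to being an LTF. The reduction proceeds in two polynomial-time stages. First, compute the Chow parameters of $f$; by Theorem \ref{chowclassify}, comparing each $a_i$ with $2^{n-1}$ identifies the variables in which $f$ is decreasing. Letting $\vu$ be the indicator vector of those variables and setting $\fp := f \circ \ucomp$, the function $\fp$ is positive and is an LTF if and only if $f$ is. Second, force smallness by substituting zeros into any ``hot'' singleton coordinates: define $\fps(\x) := \fp(\y)$ where $y_i = 0$ whenever $\fp(\hat{e}_i) = 1$ and $y_i = x_i$ otherwise. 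By construction $\fps$ is both positive and small, and the entire reduction runs in time polynomial in $N = 2^n$.

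The main technical obstacle will be verifying that this second transformation preserves LTF status in both directions, so that $\fps$ is Semi-Goldilocks if and only if $f$ is an LTF. One direction is essentially immediate: any realization $(\w, \theta)$ of $\fp$ restricts (after deleting weights $w_i > \theta$ on the hot coordinates) to a realization of $\fps$. The converse requires showing that a realization of $\fps$ lifts to a realization of $\fp$ by reinstating sufficiently large weights on the hot coordinates, which then force $\fp$ to output $1$ whenever such a coordinate is active. Once this equivalence is pinned down, the reduction $f \mapsto \fps$ is a valid polynomial-time reduction from \Thresf to \SGoldf, so co-NP-hardness follows; combined with co-NP membership, \SGoldf is co-NP-complete.
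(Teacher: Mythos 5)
Your proposal is correct and takes essentially the same approach as the paper: the same three-case certificate argument for co-NP membership, and the same two-stage polynomial-time reduction from \Thresf (first making the function positive via $\vu$-complementation driven by the Chow parameters, then making it small by zeroing out the hot singleton coordinates), with the key observation being that both stages preserve LTF status.
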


Using our result on the complexity of \SGoldf, we prove a similar result for \Goldf. 

\begin{theorem}\label{coNPcomp.thm}
Given an arbitrary boolean function $f$, the problem of determining whether $f$ is a Goldilocks function, \Goldf, is co-NP-complete.\end{theorem}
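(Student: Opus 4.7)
The plan is to mirror the structure of Theorem \ref{coNPcomp.sgold.thm}: show that \Goldf lies in co-NP, and then prove co-NP-hardness by a polynomial-time reduction from \SGoldf. For membership in co-NP, I would reuse the three polynomial-size certificates from the Semi-Goldilocks proof (an asummability-violating sequence from Theorem \ref{asum.thm} if $f$ is not an LTF; a comparable pair $\vec{x} \leq \vec{y}$ with $f(\vec{x}) > f(\vec{y})$ if $f$ is not positive; and a basis vector $\hat{e}_i$ with $f(\hat{e}_i) = 1$ if $f$ is not small), and add a fourth certificate for non-ampleness: a negation pair $(\vec{x}, \bar{\vec{x}})$ with $f(\vec{x}) = f(\bar{\vec{x}}) = 0$, whose existence in the non-ample case is guaranteed by Theorem \ref{equivGold}. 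Each such witness is verifiable in polynomial time by direct evaluation.

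For co-NP-hardness, I plan to reduce \SGoldf to \Goldf as follows. Given an input boolean function $f$ on $n$ variables (assume $n \geq 2$; the finitely many smaller cases can be hard-coded), compute the boolean function $g$ on $n+1$ variables defined by
\[
    g(x_0, \vec{x}) := f(\vec{x}) \,\vee\, \bigl( x_0 \wedge f^d(\vec{x}) \bigr).
\]
This construction runs in time linear in the input size $2^n$, so the reduction is polynomial. The central claim is that, provided $f \not\equiv 0$, the function $g$ is Goldilocks if and only if $f$ is Semi-Goldilocks; the degenerate case $f \equiv 0$ is detected in linear time and a fixed Goldilocks function on $n+1 \geq 3$ variables (whose existence is guaranteed by Figure \ref{table.d0n}) is output instead.

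I would verify the equivalence by checking each Goldilocks condition on $g$. Ampleness is immediate from the construction: for any negation pair $(0, \vec{x}), (1, \bar{\vec{x}})$, either $f(\vec{x}) = 1$, making $g(0, \vec{x}) = 1$, or $f(\vec{x}) = 0$, in which case $f^d(\bar{\vec{x}}) = \overline{f(\vec{x})} = 1$ and so $g(1, \bar{\vec{x}}) = 1$. Positivity of $g$ in $x_0$ follows from $g(1, \vec{x}) = f(\vec{x}) \vee f^d(\vec{x}) \geq f(\vec{x}) = g(0, \vec{x})$, and positivity in the remaining variables is inherited from $f$, using that the dual of a positive boolean function is positive. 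Smallness is inherited from $f$ at $\hat{e}_i$ for $i \geq 1$, while $g(\hat{e}_0) = f^d(\vec{0}) = \overline{f(\vec{1})}$ vanishes precisely because $f$ being positive and nonzero forces $f(\vec{1}) = 1$ (this is where the $f \not\equiv 0$ hypothesis is used).

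The main obstacle is establishing that $g$ is an LTF if and only if $f$ is. The backward direction is easy: $f(\vec{x}) = g(0, \vec{x})$ is a coordinate restriction of $g$, and restrictions of LTFs are LTFs. For the forward direction, I would invoke the dichotomy that an LTF $f$ satisfies either $f \geq f^d$ or $f \leq f^d$ pointwise. If $f \geq f^d$ (i.e., $f$ is ample or self-dual), then $g(x_0, \vec{x})$ simplifies to $f(\vec{x})$, so $g$ is just $f$ with $x_0$ adjoined as a weak variable, which is still an LTF. If instead $f$ is an LTF with $f \leq f^d$, then $g(x_0, \vec{x}) = \bar{x}_0 f(\vec{x}) + x_0 f^d(\vec{x}) = f^\sd(x_0, \vec{x})$, the self-dualization of $f$, which is an LTF by Theorem \ref{sd.bijection}. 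Combining these verifications yields the equivalence, which, together with membership in co-NP, proves that \Goldf is co-NP-complete.
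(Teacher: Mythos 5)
Your proof is correct, but it takes a genuinely different route from the paper's, and in fact the route is cleaner. The paper's reduction from \SGoldf to \Goldf queries the Goldilocks oracle twice, on $f$ and on $f^d$, and ORs the answers — a Turing-style reduction rather than a many-one reduction — and its analysis of the case ``$f$ Semi-Goldilocks but not ample'' asserts that $f^d$ must then be Goldilocks. That step is not justified: $f^d$ is positive and ample (Lemma \ref{dual.pos.lem} plus the dichotomy after the lemma relating ampleness to $f \geq f^d$), but it need not be small. Already $f = x_1 \wedge \cdots \wedge x_n$ for $n \geq 2$ is Semi-Goldilocks and not ample, while $f^d = x_1 \vee \cdots \vee x_n$ has $f^d(\hat e_i) = 1$, so neither $f$ nor $f^d$ is Goldilocks, yet $f$ is Semi-Goldilocks. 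Your construction sidesteps this entirely: by building $g(x_0, \vec{x}) = f(\vec{x}) \vee (x_0 \wedge f^d(\vec{x}))$ on $n+1$ variables, you get ampleness for free from the negation-pair structure in $x_0$, smallness at $\hat e_0$ from $f(\vec 0) = 0$ and $f(\vec 1) = 1$ (hence the $f \not\equiv 0$ caveat, correctly handled), positivity from positivity of $f$ and $f^d$, and the LTF property via the dichotomy that an LTF satisfies $f \geq f^d$ (then $g$ is $f$ with a dummy variable) or $f \leq f^d$ (then $g = f^\sd$, an LTF by Theorem \ref{sd.bijection}). The result is a single polynomial-time computable instance with $f$ Semi-Goldilocks iff $g$ Goldilocks — a genuine many-one reduction, which is the standard form for completeness proofs and, incidentally, also repairs the gap in the paper's argument. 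The co-NP membership part matches the paper's.
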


\begin{proof}
First, we show that \Goldf is in co-NP. Assume that $f$ is not a Goldilocks function. There are two possible cases to consider: 
\begin{enumerate}[label=(\roman*)]
\item $f$ is not a Semi-Goldilocks LTF. In this case, there is some witness (as previously described) that can be verified in polynomial time. 
\item $f$ is Semi-Goldilocks and not ample. In this case, a witness is a negation pair $\x, \nx$ on which $f(\x) = 0 = f(\nx)$. 
\end{enumerate}
Each witness can be appended with an integer identification, which allows an algorithm to verify that $f$ is not Goldilocks in polynomial time. It follows that \Goldf is in co-NP.

We now show \Goldf is co-NP-hard by showing that solving an instance of \SGoldf reduces to solving an instance of \Goldf. Let $f$ be some arbitrary boolean function on $n$ variables, such that the input size is $N = 2^n$. 

We perform the following reduction. We calculate the dual of $f$, which we denote $f^d$. We then return 
\begin{equation}\label{return.eqn}
\textproc{Gold}(f) \vee \textproc{Gold}(f^d). 
\end{equation}

The process of calculating the dual and evaluating the above expression takes time $O(N)$. 

If $f$ is a Goldilocks LTF, then this means $f$ is a Semi-Goldilocks LTF as well, so the above expression returns true. On the other hand, if $f$ is not a Goldilocks LTF, one of the two cases below must occur: 

\begin{enumerate}
\item $f$ is a Semi-Goldilocks LTF that is not ample. In this case, $f^d$ must therefore be ample, so $f^d$ is a Goldilocks LTF, so the expression (\ref{return.eqn}) evaluates to true.  
\item $f$ is not a Semi-Goldilocks LTF. In this case, neither $f$ nor $f^d$ is a Goldilocks LTF, so the expression evaluates to false.
\end{enumerate}

Therefore, we can reduce \SGoldf to \Goldf. Since \SGoldf is co-NP-hard, \Goldf is co-NP-hard as well. Thus the problem of deciding whether a given function is a Goldilocks LTF is co-NP-complete. 
\end{proof}

\appendix

\section{An algorithm for enumeration of chambers of $\Weightsgen$}\label{sec.alg}

\begin{algorithm}
\caption{}
\label{algorithm1}
\begin{algorithmic}[1]
\Procedure{SGoldSD}{$f, \vec{a}$} \Comment{Counts semi-Goldilocks functions in SD class}
\State{$s \gets 0$}
\For{$i \in \{0, \ldots, n\}$}\label{alg.sg.ired}
	\If{$a_i \neq a_{i-1}$} \Comment{If distinct from the previous $i$}\label{alg.sg.dist}
        \If{\textproc{isSmall}$(f, i, 1) == 1$} \Comment{If $f_{x_i = 1}$ is small}\label{alg.sg.1small}
            \State{$s \gets s + \textproc{PermCount}(f, i, 1, \vec{a})$}\label{alg.sg.1add}\Comment{Add the number of permutations of $f_{x_i = 1}$}
        \EndIf
        \If{\textproc{isSmall}$(f, i, 0) == 0$ and $a_i \neq 2^{n-1}$}\label{alg.sg.0small}\Comment{If $f_{x_i = 0}$ is small and distinct from $f_{x_i= 1}$}
        	\State{$s \gets s + \textproc{PermCount}(f, i, 0, \vec{a})$}\label{alg.sg.0add} \Comment{Add the number of permutations of $f_{x_i = 0}$}
        \EndIf
    \EndIf
\EndFor\label{alg.sg.endloop}
\State{\Return $s$}
\EndProcedure
\State{}
\Procedure{GoldSD}{$f, \vec{a}$} \Comment{Counts Goldilocks functions in SD class}
\State{$s \gets 0$}
\For{$i \in \{0, \ldots, n\}$}
	\If{$a_i \neq a_{i-1}$} \Comment{If distinct from the previous $i$}
    \State{$\mathrm{val} \gets 0$} \Comment{Which reduction is ample?}
	\If{$b_i > 2^{n-1}$} \Comment{If $f_{x_i = 1}$ is the ample reduction}
    	\State{$\mathrm{val} \gets 1$}
    \EndIf
    \If{\textproc{isSmall}$(f, i, \mathrm{val}) == 1$}\Comment{If the ample assignment is small}
        	\State{$s \gets s + \textproc{PermCount}(f, i, \mathrm{val}, \vec{a})$}\Comment{Add the number of permutations of $f_{x_i = \mathrm{val}}$}
        \EndIf
    \EndIf
\EndFor
\State{\Return{$s$}}
\EndProcedure
\end{algorithmic}
\end{algorithm}

We present algorithms for the enumeration of Semi-Goldilocks and Goldilocks functions, which by Corollary \ref{cor:bijection} enumerate the number of chambers in $\Weightsg$ and in $\Weights$, respectively. We follow primarily the canonical LTF enumeration algorithm due to Winder \cite{sevenvar} and adapted by Muroga et al. \cite{eightvar}. This algorithm generates one element of each self-dualization class (see Definition \ref{sd.class.def}) in the form of a self-dual function on $n+1$ variables. \\

We prove the following lemma due to Winder.

\begin{lemma}[See Lemma \ref{canonical.generate.lem}]
\cite{Winder61}
Given a self-dual representative $f :\{0, 1\}^{n+1} \to \{0, 1\}$ for an SD class, every element of that SD class which is a function on $n$ variables can be reached by the following process:
\begin{enumerate}[label=(\roman*)]
\item setting a single argument of $f$ to either 1 or 0,
\item permuting some remaining arguments of $f$,
\item $\vu$-complementing some remaining arguments of $f$, 
\end{enumerate}
and every LTF so obtained is in the SD class of $f$.
\end{lemma}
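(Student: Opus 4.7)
My plan is to prove the two directions separately: that operations (i)--(iii) produce only elements of the SD class of $f$ (soundness), and that every $n$-variable element of that class is reachable (completeness).

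\textbf{Soundness.} I would observe that each of the three operations is expressible as a composition of SD-class generators. Operation (i), fixing some argument $x_j$ of $f$ to $v \in \{0,1\}$, decomposes as: a permutation moving index $j$ to index $0$, followed (when $v=1$) by a $\vu$-complementation of that argument, followed by anti-self-dualization. Operations (ii) and (iii) are literally permutations and $\vu$-complementations on the remaining $n$ arguments. Since all four SD-class generators are being used, any function so obtained lies in the SD class of $f$, and is automatically an LTF by \cite[Theorem~2]{Goto62}.

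\textbf{Completeness.} Let $h$ be an $n$-variable LTF in the SD class of $f$. I would reduce the problem to showing that $h^{\sd}$ and $f$ are N-equivalent, i.e. differ only by a permutation and a $\vu$-complementation. Given this, there exist $\sigma \in S_{n+1}$ and $\vec{u} \in \{0,1\}^{n+1}$ with $h^{\sd} = f \circ \gamma_{\vec{u}} \circ \sigma$. Applying anti-self-dualization by setting $x_0 = 0$ to $h^{\sd}$ recovers $h$ by Definition \ref{sd.class.def}. Repackaging the composition $\gamma_{\vec{u}} \circ \sigma$ followed by ``set $x_0 = 0$'' as ``choose the argument $\sigma^{-1}(0)$ of $f$, set it to $u_{\sigma^{-1}(0)}$, then permute and $\vec{u}$-complement the remaining $n$ arguments'' shows $h$ arises from an instance of (i)--(iii).

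\textbf{The N-equivalence of self-dual representatives.} This is where the real work lies. I would argue by induction on the length of a derivation from $f$ to $h^{\sd}$ in the four generators. Permutations and negations preserve N-equivalence and do not change the number of arguments. A self-dualization followed (possibly after intervening permutations and negations) by an anti-self-dualization forms a round trip that, by Theorem \ref{sd.bijection}, is forced to return to the same $(n+1)$-variable self-dual LTF up to permutation and $\vec{u}$-complementation. The key commutation rules I would establish are: (a) permutations of the arguments $x_1, \dots, x_n$ commute with self-dualization, since they act trivially on $x_0$; (b) $\vec{u}$-complementations on those arguments likewise commute; and (c) negating the distinguished argument $x_0$ swaps a function with its dual, an operation already available via permutations and negations inside the SD class. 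Using (a)--(c) I would push all permutations and negations to one end of the derivation, leaving the self-dualization/anti-self-dualization pairs to cancel, reducing the derivation to pure permutations and negations.

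\textbf{Main obstacle.} The delicate step is precisely the commutation/cancellation argument that normalizes a general SD-class derivation. Self-dualization introduces a new coordinate and couples $f$ with $f^{d}$, so commuting operations past it requires bookkeeping of which variable plays the role of $x_0$ at each stage. Theorem \ref{sd.bijection} (bijectivity of self-dualization) is what ultimately makes the round-trip analysis rigid: it guarantees that whenever we ascend to $(n+2)$ variables and descend back, the endpoint is uniquely determined by the starting $(n+1)$-variable function, so no new N-type classes can be introduced by detours through higher-dimensional self-dual representatives.
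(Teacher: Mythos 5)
Your approach is genuinely different from the paper's. The paper proves this lemma by invoking Chow parameters as a complete invariant (Corollary \ref{chowunique}) and directly computing how the Chow parameter vector transforms under each of the five SD-class generators (Propositions \ref{chow.dual.prop}, \ref{chow.ucomp.prop}, \ref{chow.perm.prop}, \ref{chow.sdual.prop}, \ref{chow.red.prop}); since any reachable $n$-variable parameter vector $(b_0, \ldots, b_n)$ must satisfy $b_i = a_j/2$ or $b_i = 2^n - a_j/2$, and any such vector is already realizable via a single reduction followed by permutation and complementation, the derivation-level bookkeeping never arises. You instead attempt a derivation-normalization argument: reduce to showing $h^{\sd} \sim_N f$, then normalize an arbitrary derivation by pushing permutations and negations past the self-dualizations. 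Your soundness argument and your reduction of completeness to the N-equivalence of $h^{\sd}$ and $f$ are both fine, and the commutation rules (a) and (b) you state are correct.

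However, there is a concrete gap in the normalization step. Your rules (a)--(c) handle permutations and negations of $x_1, \ldots, x_n$ and negation of the distinguished coordinate $x_0$, but not permutations that \emph{move} $x_0$ to another position, and in that case a self-dualization followed by an anti-self-dualization does \emph{not} return to the same N-type class. Concretely, take $F = \mathrm{Maj}(x_1, x_2, x_3)$, which is self-dual on three variables, so $F^{\sd}(x_0, x_1, x_2, x_3) = F(x_1, x_2, x_3)$. Pre-compose with the transposition $\sigma = (0\,1)$ and anti-self-dualize: the result is $F(0, x_2, x_3) = x_2 \wedge x_3$ (with $x_1$ a dummy variable), which is neither N-equivalent to $F$ nor even self-dual. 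Thus "the self-dualization/anti-self-dualization pairs cancel" is false as stated; an excursion can instead effect a genuine reduction. Theorem \ref{sd.bijection} only guarantees $g \mapsto g^{\sd}$ is a bijection between $n$-variable LTFs and self-dual $(n+1)$-variable LTFs; it does not by itself force round trips through higher dimensions to be trivial modulo N-equivalence once $x_0$ is allowed to move. To repair the argument you would need to prove that a nested excursion collapses to a \emph{single} reduction plus permutation and complementation (not that it cancels), and then induct on the number of excursions. This is precisely the bookkeeping the paper's Chow-parameter argument avoids, which is why the paper's route is substantially shorter.
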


On this basis we present a pair of subroutines (Algorithm \ref{algorithm1}) which take as input a positive, self-dual LTF $f$ on $n+1$ variables with Chow parameters $\vec{a} = (2^n, 2a_0, \ldots, 2a_n)$ (see Definition \ref{chow parameters}) in monotonic order and return the number of semi-Goldilocks (\textproc{SGoldSD}) or Goldilocks (\textproc{GoldSD}) functions on $n$ variables in the SD class containing $f$. 

Winder's enumeration algorithm generates a single element of each SD class which contains any LTFs on $n$ variables. In this section we prove the following theorems. 

\begin{theorem}[see Theorems \ref{alg.semivalid.thm} and \ref{alg.valid.thm}]
The sum of \textproc{SGoldSD}$(f, \vec{a})$ (resp. \GoldSD{f, \vec{a}}) across all canonical generators $f$ output by Winder's algorithm is the total number of Semi-Goldilocks (resp. Goldilocks) functions on $n$ variables.
\end{theorem}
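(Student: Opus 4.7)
The plan is to show that for each canonical generator $f$ on $n+1$ variables produced by Winder's enumeration, the routine \textproc{SGoldSD}$(f, \vec{a})$ returns the exact number of Semi-Goldilocks LTFs on $n$ variables in the SD class of $f$ (and analogously for \textproc{GoldSD}). Combined with Lemma \ref{canonical.generate.lem}, which guarantees that every SD class containing an LTF on $n$ variables is produced exactly once as a canonical generator, the theorem then follows by summing over all canonical generators.

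First I would describe how the SD class of a canonical $f$ is traversed. By Lemma \ref{canonical.generate.lem}, every LTF on $n$ variables in the SD class of $f$ is obtained by fixing one variable $x_i = v \in \{0,1\}$, then permuting and $\vu$-complementing the remaining $n$ variables. Since $f$ can be chosen positive self-dual, the restriction $f_{x_i = v}$ is automatically positive; moreover any nontrivial $\vu$-complementation on the unfixed variables destroys positivity (as $f$ is nondegenerate in those variables by self-duality). Hence the positive LTFs in the SD class are parametrized by pairs $(i, v)$ modulo permutation, and to count Semi-Goldilocks elements we need only further check smallness on each representative, weighted by its $S_n$-orbit size.

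Next I would identify the redundancies collapsed by the outer loop. By Corollary \ref{chow.perm.fixed.prop}, a permutation of arguments fixes $f$ iff it fixes the Chow parameters; thus if $a_i = a_j$ then fixing $x_i = v$ and fixing $x_j = v$ produce functions identical up to argument permutation, so with $\vec{a}$ sorted monotonically the test $a_i \neq a_{i-1}$ selects one representative per equivalence class. When $a_i = 2^{n-1}$ (the Chow parameter corresponding to a weak variable of the reduction), the functions $f_{x_i = 0}$ and $f_{x_i = 1}$ agree on the remaining arguments, so the second branch of \textproc{SGoldSD} correctly excludes this case to avoid double-counting. The call \textproc{IsSmall}$(f, i, v)$ verifies smallness of the reduction directly via Theorem \ref{equivGold}(ii'), and \textproc{PermCount}$(f, i, v, \vec{a})$ returns the orbit size $n!/|\mathrm{Stab}(f_{x_i = v})|$, with the stabilizer computed from the multiplicities in the Chow parameters of the reduction by Corollary \ref{chow.perm.fixed.prop}. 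Summing the contributions gives the Semi-Goldilocks count for the SD class.

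For \textproc{GoldSD} the additional requirement is ampleness. Here self-duality of $f$ is crucial: since $f(x) = \overline{f(\bar x)}$, the two restrictions $f_{x_i = 1}$ and $f_{x_i = 0}$ are dual boolean functions, so by Lemma \ref{chow.ample.lem} exactly one of them is ample whenever they are distinct, with the ample one determined by whether its true set has size $\ge 2^{n-1}$; this is precisely the test $b_i > 2^{n-1}$ used to set $\mathrm{val}$. The algorithm then checks smallness and adds the orbit count for that single value. The main obstacle I expect is the self-dual boundary case, where $f_{x_i = v}$ is itself a self-dual LTF (so it is ample as its own dual) and the two reductions coincide up to relabeling; one must verify that the $b_i > 2^{n-1}$ condition plus the $a_i \neq a_{i-1}$ filter together count each such function exactly once. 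The remaining work is routine bookkeeping to confirm that \textproc{PermCount} correctly implements the stabilizer formula and that the indexing edge cases at $i = 0$ and $i = n$ are handled.
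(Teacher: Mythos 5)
Your proposal follows the same route as the paper: Lemma \ref{canonical.generate.lem} reduces the problem to a per-SD-class count, positivity of the reductions collapses the $\vu$-complementation step so that the Semi-Goldilocks (resp.\ Goldilocks) functions in the SD class are a disjoint union of $S_n$-orbits of small (resp.\ ample and small) reductions (the paper's Lemma \ref{sd.decomp.lem} and Corollary \ref{count.gold.sd.cor}), and the subroutines are then verified line by line against that decomposition, with the sum over Winder's generators giving the total.

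One parenthetical in your second paragraph is incorrect, though it does not damage the overall argument: you assert that ``any nontrivial $\vu$-complementation on the unfixed variables destroys positivity (as $f$ is nondegenerate in those variables by self-duality).'' Self-duality does not imply nondegeneracy (e.g.\ $f(x_0,x_1,x_2)=x_1$ is self-dual and weak in $x_0,x_2$), so a canonical generator, and hence its reductions, may have weak variables; complementing a weak variable leaves the function unchanged and thus preserves positivity. The correct statement, on which the paper implicitly relies, is that a positive-to-positive $\vu$-complementation can only act on weak variables and therefore fixes the reduction, so the resulting $h$ still lies in $O_{S_n}(f_{x_i=s})$; the conclusion you need is exactly this, not the stronger (and false) nondegeneracy claim.
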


\subsection{Variable assignments: a generalization of anti-self-dualization}
In order to apply Lemma \ref{canonical.generate.lem}, we formalize the process of setting an argument to 0 or 1. Let $f$ be a self-dual LTF on $n+1$ variables. If the anti-self-dualization $F$ has Chow parameters $(a_0, \ldots, a_n)$, it follows from Proposition \ref{chow.sdual.prop} that $f$ has parameters $(2^n, 2^{n+1} - 2a_0, \ldots, 2a_n)$.

\begin{definition}
We denote by $f_{x_{i}=0}$ the assignment of the $i$th variable in $f$ to 0; that is,
\[
	f_{x_{i}=0}(x_1, \ldots, x_n) := f(x_1, \ldots, x_{i-1}, 0, x_i, \ldots, x_n).
\]
Similarly, we define $f_{i = 1}$ to be the assignment of the $i$th variable in $f$ to $1$. We call such a function the \emph{false} and \emph{true $i$-reductions}, respectively.

We use corresponding notation for the true and false sets, so that, for example, $T_{x_i = 1}$ is the set of elements $\x$ on the domain with $x_i = 1$ and $f(\x) = 1$. Note that $T_{x_i = 1}$ corresponds also to the true set of $f_{x_i = 1}$.
\end{definition}

\begin{remark}
Since the reductions of $f$ are identified by its behavior on part of the domain, positivity and smallness are preserved under reduction.
\end{remark}

The following properties classify the behavior of reductions.
\begin{proposition}\label{red.dualpair.prop}
Each $i$-reduction pair are a dual pair, that is, 
\[
	f_{x_i = 1} = f_{x_i = 0}^d.
\]
\begin{proof}
Fix a negation pair $\x, \nx \in \{0, 1\}^n$. Note that since $f$ is self-dual, every negation pair on $\{0, 1\}^{n+1}$ must be given opposite values by $f$.

There are, without loss of generality, three cases for the values of $f_{x_i = 0}$ on $\x, \nx$. If $f_{x_i = 0}(\x) = 0 = f_{x_i = 0}(\nx)$, then $f_{x_i = 1}(\x) = 1$ because it is the negation of $f_{x_i = 0}(\nx)$ when viewed as an element of $f$. Similarly, $f_{x_i = 1}(\nx) = 1$. Thus this negation pair behaves consistently with $f_{x_i =  0}$ and $f_{x_i = 1}$ being dual. The other two cases are identical. Thus $f_{x_i = 1} = f_{x_i = 0}^d$.
\end{proof}
\end{proposition}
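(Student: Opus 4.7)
The plan is to reduce the claim to a short calculation that uses the self-duality of $f$ exactly once. Fix an arbitrary $\x \in \{0,1\}^n$ and let $\y \in \{0,1\}^{n+1}$ denote the vector obtained by inserting a $1$ into the $i$th coordinate of $\x$. Then the bitwise complement $\ny$ is exactly the vector obtained by inserting a $0$ into the $i$th coordinate of $\nx$, because negating the coordinate $y_i = 1$ produces $0$ and negating the remaining coordinates produces $\nx$. Unpacking the definitions of the two reductions gives $f_{x_i=1}(\x) = f(\y)$ and $f_{x_i=0}(\nx) = f(\ny)$.

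Next I would invoke the hypothesis that $f$ is self-dual, which by definition of the dual gives $f(\y) = \overline{f(\ny)}$ for every $\y \in \{0,1\}^{n+1}$. Substituting the two identifications above yields
\[
  f_{x_i=1}(\x) \;=\; f(\y) \;=\; \overline{f(\ny)} \;=\; \overline{f_{x_i=0}(\nx)},
\]
which is precisely $f_{x_i=0}^d(\x)$. Since $\x \in \{0,1\}^n$ was arbitrary, this equality of values on the whole domain gives $f_{x_i=1} = f_{x_i=0}^d$ as boolean functions.

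I do not anticipate any real obstacle, since the heart of the argument is the purely combinatorial observation that the global bitwise negation on $\{0,1\}^{n+1}$ decomposes as ``flip the fixed $i$th coordinate from $1$ to $0$'' together with ``negate the remaining $n$ coordinates''; self-duality then converts this geometric decomposition into the desired complementation of output values. The paper presents this same content as a case analysis over the possible values of $f_{x_i=0}$ on a negation pair $\x,\nx$, but the direct evaluation above makes the case split unnecessary and extends verbatim to the symmetric identity $f_{x_i=0} = f_{x_i=1}^d$.
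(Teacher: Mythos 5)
Your proof is correct and rests on exactly the same key observation as the paper's: inserting a $1$ at position $i$ into $\x$ yields the bitwise complement (in $\{0,1\}^{n+1}$) of inserting a $0$ at position $i$ into $\nx$, after which self-duality of $f$ does the rest. You present this as a single direct evaluation where the paper runs a case analysis over the values of $f_{x_i=0}$ on a negation pair, but the underlying argument is the same and your streamlined version is fine.
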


\begin{proposition}\label{chow.red.prop}
The Chow parameters for $f_{x_i = 1}$ are given by \[(a_i, a_0, \ldots, a_{i-1}, a_{i+1}, \ldots, a_n).\]
\begin{proof}
Let the Chow parameters for $f_{x_i = 1}$ be denoted by $(b_i, b_0, \ldots, b_{i-1}, b_{i+1}, \ldots, b_n)$.

First we prove that $b_i = a_i$. By definition we have:
\[
	2a_i = |T_{x_i = 1}| + |F_{x_i = 0}|.
\]
But since the assignments are dual, the true set of $f_{x_i = 1}$ has the same cardinality as the false set of $f_{x_i = 0}$, and we have
\[
	2a_i = 2|T_{x_i = 1}| = 2b_i.
\]
Thus the true set of $f_{x_i = 1}$ has cardinality $a_i$. 

Now we prove that $b_j = a_j$ for all $j \neq i$. By definition and a suitable partitioning of the domain, we have:
\begin{align*}
	2a_j &= |T_{x_j = 1}|+|F_{x_j = 0}| \\
    	&= \left( |T_{x_i = 1, x_j = 1}| + |T_{x_i = 0, x_j = 1}|\right) + \left( |F_{x_i = 0, x_j = 0}| + |F_{x_i = 1, x_j = 0}|\right) \\
        &= \left( |T_{x_i = 1, x_j = 1}| + |F_{x_i = 1, x_j = 0}|\right) + \left( |T_{x_i = 0, x_j = 1}| + |F_{x_i = 0, x_j = 0}| \right).
\end{align*}
The left term is $b_j$ by definition, and the right term is the corresponding term in the dual, which is also $b_j$ by Proposition \ref{chow.dual.prop}. Thus we have $a_j = b_j$ for all $j \neq i$.
\end{proof}
\end{proposition}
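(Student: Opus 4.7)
The claim has two distinct pieces to verify: the first component of the new Chow tuple (the count of true vectors $b_i := m_{f_{x_i = 1}}$) should equal $a_i$, and the remaining components $b_j$ for $j \neq i$ should equal the corresponding $a_j$. My plan is to dispatch both by exploiting the self-duality of $f$, which is the one hypothesis we have not yet used.

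The key translation step I would establish up front is: since $f$ is self-dual, the global negation map $\x \mapsto \n{\x}$ on $\{0,1\}^{n+1}$ carries the true set of $f$ bijectively onto the false set of $f$. Restricting this involution to vectors with a prescribed pattern in coordinates $i$ and $j$ then gives size equalities such as
\[
|T_{x_i = 1}| = |F_{x_i = 0}| \qquad \text{and} \qquad |T_{x_i = 1, x_j = 1}| = |F_{x_i = 0, x_j = 0}|,
\]
and analogously with the roles of $0$ and $1$ in the $x_j$-coordinate swapped. This is the only nontrivial input; once it is in place, everything else is bookkeeping.

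For the count component, I would start from the definition $2a_i = |T_{x_i = 1}| + |F_{x_i = 0}|$ (here $2a_i$ is the $i$-th Chow parameter of $f$ itself, since $f$ has parameters $(2^n, 2^{n+1}-2a_0, 2a_1, \ldots, 2a_n)$) and collapse the right-hand side to $2|T_{x_i = 1}|$ via self-duality; this gives $a_i = |T_{x_i = 1}| = m_{f_{x_i = 1}} = b_i$. For the weight components with $j \neq i$, I would partition the true and false sets of $f$ in the coordinate pair $(x_i, x_j)$:
\[
2a_j = |T_{x_j = 1}| + |F_{x_j = 0}| = \sum_{\epsilon \in \{0,1\}} \bigl( |T_{x_i = \epsilon, x_j = 1}| + |F_{x_i = \epsilon, x_j = 0}|\bigr),
\]
and then use the pair of self-duality identities above to rewrite each $x_i = 0$ summand as the matching $x_i = 1$ summand of the opposite truth value. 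The result pairs up identically with $2 b_j = |T_{x_i = 1, x_j = 1}| + |F_{x_i = 1, x_j = 0}|$ counted twice, yielding $a_j = b_j$.

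I do not anticipate a genuine obstacle here; the statement is really a bookkeeping consequence of Proposition~\ref{red.dualpair.prop} together with self-duality. The only point that wants care is indexing: because the anti-self-dualization convention places the distinguished variable first, the reordered tuple in the conclusion $(a_i, a_0, \ldots, a_{i-1}, a_{i+1}, \ldots, a_n)$ is precisely what one gets after promoting the $i$-th coordinate of $f$ into the ``count'' slot of the reduction and leaving the other coordinates in their original relative order, so I would state the indexing convention explicitly at the outset to avoid off-by-one confusion.
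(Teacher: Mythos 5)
Your proposal is correct and follows essentially the same route as the paper: both arguments establish $b_i = a_i$ from $2a_i = |T_{x_i=1}| + |F_{x_i=0}|$ via the duality of the reduction pair, and both handle $j \neq i$ by partitioning the four-term sum for $2a_j$ according to the value of $x_i$. The only cosmetic difference is that the paper cites Proposition \ref{chow.dual.prop} to identify the $x_i = 0$ group with the $j$-th parameter of the dual reduction, whereas you re-derive the needed cardinality identities directly from the negation involution on the self-dual $f$ — the same fact in slightly more elementary dress.
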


We now have the tools to prove our critical lemma.
\begin{lemma}\cite{Winder61}\label{canonical.generate.lem}
Given a self-dual representative $f :\{0, 1\}^{n+1} \to \{0, 1\}$ for an SD class, every element of that SD class which is a function on $n$ variables can be reached by the following process:
\begin{enumerate}[label=(\roman*)]
\item setting a single argument of $f$ to either 1 or 0,
\item permuting some remaining arguments of $f$,
\item $\vu$-complementing some remaining arguments of $f$, 
\end{enumerate}
and every LTF so obtained is in the SD class of $f$.
\begin{proof}
Recall that Chow parameters are sufficient invariants to distinguish linear threshold functions (Corollary \ref{chowunique}), and that every element of the SD class is reachable by some sequence of self-dualizations, anti-self-dualizations, permutations, negations, and duals. Consider any such sequence which begins with a self-dual function $f$ on $n+1$ variables and ends with a function $h$ on $n$ variables. Let $f$ have Chow parameters $(a_{-1} = 2^n, a_0, \ldots, a_n)$ and $h$ have Chow parameters $(b_0, \ldots, b_n)$. By the properties of Chow parameters under these operations (Propositions \ref{chow.dual.prop},  \ref{chow.ucomp.prop}, \ref{chow.perm.prop}, \ref{chow.sdual.prop}, \& \ref{chow.red.prop}), every resulting $b_i$ will satisfy either $b_i = \frac{a_j}{2}$ or $b_i = 2^n - \frac{a_j}{2}$ for some $a_j$. Any such sequence of Chow parameters is (1) in the same SD class as $f$ and (2) obtainable by (i)-(iii). Thus an LTF is in the SD class of $f$ if and only if it is obtainable by (i)-(iii). 
\end{proof}
\end{lemma}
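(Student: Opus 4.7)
The plan is to reduce everything to a Chow-parameter calculation. By Corollary \ref{chowunique}, the Chow parameters uniquely determine a linear threshold function, so it suffices to identify which Chow-parameter tuples arise both (a) from $n$-variable members of the SD class of $f$ and (b) from applications of the recipe (i)--(iii), and show these two sets coincide.

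For the easy inclusion, I would observe that each of (i), (ii), (iii) is either a generator of the SD equivalence relation or a composition of such. A single variable assignment $f \mapsto f_{x_i = c}$ is, up to a permutation putting variable $i$ in the first slot, either the anti-self-dualization (Definition \ref{sd.class.def}) or its composition with a negation; permutations and $\vu$-complementations are explicit SD-class generators. Hence any LTF produced by (i)--(iii) from a self-dual $f$ lies in the SD class of $f$.

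For the harder inclusion, I would invoke the transformation rules for Chow parameters under each SD-class generator: duality (Proposition \ref{chow.dual.prop}), $\vu$-complementation (Proposition \ref{chow.ucomp.prop}), permutation (Proposition \ref{chow.perm.prop}), self-dualization (Proposition \ref{chow.sdual.prop}), and anti-self-dualization/reduction (Proposition \ref{chow.red.prop}). If $f$ has Chow parameters $(2^n, a_0, \dots, a_n)$ (where $m_f = 2^n$ by self-duality), these rules together show that the Chow parameters of any $n$-variable element $h$ of the SD class must take the form $(m_h, b_0, \dots, b_n)$ where each $b_j$ is either $a_{\sigma(j)}/2$ or $2^n - a_{\sigma(j)}/2$ for some index $\sigma(j)$, with $m_h$ similarly determined by the ``removed'' index. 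Such a tuple is then realizable by choosing the removed index $i$ in step (i) (with $c = 0$ or $c = 1$ according to which of the two forms $m_h$ takes), permuting the remaining coordinates by $\sigma$ in step (ii), and $\vu$-complementing those slots where the target parameter is $2^n - a_{\sigma(j)}/2$ rather than $a_{\sigma(j)}/2$ in step (iii).

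The main obstacle is justifying that an \emph{arbitrary} sequence of SD-class generators from $f$ to $h$ produces Chow parameters of the restricted form described above --- in principle, alternating self-dualizations and anti-self-dualizations could generate more elaborate tuples. I would resolve this by noting that since $f$ has $n+1$ variables and $h$ has $n$, the net count of anti-self-dualizations minus self-dualizations along any such sequence equals $1$, and any adjacent pair consisting of a self-dualization immediately followed by its inverse anti-self-dualization (or vice versa) can be collapsed, modulo permutations and negations on the newly introduced variable. After iterating this normalization, the sequence reduces to a single anti-self-dualization sandwiched between permutations and $\vu$-complementations, which is precisely the recipe (i)--(iii); combined with the Chow-uniqueness of LTFs, this completes the proof.
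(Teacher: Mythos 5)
Your proposal is correct and follows essentially the same Chow-parameter strategy as the paper's proof: both directions are argued by showing that the Chow-parameter tuples reachable within the SD class from $f$ coincide with those produced by (i)--(iii), using Corollary \ref{chowunique} to pass back and forth between parameters and LTFs. The one place you go further than the paper is the ``main obstacle'' paragraph: the paper simply asserts that ``by the properties of Chow parameters under these operations, every resulting $b_i$ will satisfy $b_i = a_j/2$ or $b_i = 2^n - a_j/2$,'' leaving implicit why an arbitrary, possibly long sequence of self-dualizations and anti-self-dualizations cannot produce more elaborate tuples (and ultimately deferring to Winder). Your normalization argument --- counting the net difference of (anti-)self-dualizations and collapsing innermost self-dualize/anti-self-dualize excursions modulo permutations and negations --- is the right way to fill that gap, and it is sound: one can verify that if $g$ is self-dual, then the self-dualization of a permuted/complemented anti-self-dualization of $g$ is again a permutation/complementation of $g$, so such excursions genuinely collapse. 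The only minor imprecision in your write-up is the phrase ``immediately followed by its inverse'': the relevant pairs are generally separated by permutations and $\vu$-complementations, and what you should say is that the \emph{innermost} self-dualization/anti-self-dualization pair (where the intermediate dimension is maximal) can always be collapsed through the intervening operations. With that clarification, your argument is a welcome tightening of the step the paper treats as immediate.
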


\subsection{Proof of validity of Algorithm \ref{algorithm1}}

We are now prepared to prove the validity of Algorithm \ref{algorithm1}. Throughout this section, let $f$ be a canonical generator of its SD class in the sense of Winder. That is, $f$ is a positive, self-dual linear threshold function on $n+1$ variables such that the Chow parameters of $f$, denoted $(2^n, 2^{n+1} - 2a_0, \ldots, 2a_n)$, are in monotonic order. We observe the following corollary of Lemma \ref{canonical.generate.lem}.

\begin{lemma}\label{sd.decomp.lem}
Let $O_{S_n}(h)$ denote the orbit of $h$ under all permutations of arguments, the ``P-type class'' of $h$. Then the set of Goldilocks functions in the SD class of $f$ is given by
$$ \{h \text{ Goldilocks} \; | \; h \in \SD(f)\} = \bigcup_{\fxs \text{ small and ample}} O_{S_n}(\fxs)$$
where the union is taken over all $(i, s) \in \{0,\ldots, n\} \times \{0, 1\}$ such that the corresponding reduction $f_{x_i = s}$ is ample and small. 

Similarly, the set of Semi-Goldilocks functions in the SD class of $f$ is given by
$$ \{h \text{ Semi-Goldilocks} \; | \; h \in \SD(f)\} = \bigcup_{\fxs \text{small}} O_{S_n}(\fxs)$$
where the union is over all $(i, s) \in \{0, \ldots, n\} \times \{0, 1\}$ such that the corresponding reduction is small.
\begin{proof}
For one direction, let $h$ be Goldilocks and in the SD class of $f$. By Lemma \ref{canonical.generate.lem}, we know that $h$ is the $\vu$-complementation of some element of $O_{S_n}(\fxs)$ for some reduction $\fxs$. However, all such elements are positive, since permutation and reduction preserve positivity. Since $h$ is positive, the $\vu$-complementation must be trivial. Thus $h \in O_{S_n}(\fxs)$ for some $\fxs$. Since $h$ is ample and small, it must have been the permutation of some ample and small $\fxs$. For the opposite direction, if $h \in \bigcup O_{S_n}(\fxs)$ where the union is over all ample and small reductions, then $h$ must be the permutation of some positive, ample, and small reduction $\fxs$ and is itself Goldilocks and in the SD class of $f$.

The Semi-Goldilocks proof is identical: since every element in $\bigcup O_{S_n} (f_{x_i = s})$ is a permutation of a Semi-Goldilocks reduction, it must itself be Semi-Goldilocks. For the other direction, by Lemma \ref{canonical.generate.lem} any Semi-Goldilocks $h$ is generated by a reduction, a $\vu$-complementation which must be trivial, and a permutation. Thus $h$ is the permutation of some small reduction $f_{x_i = s}$, proving the opposite inclusion.
\end{proof}
\end{lemma}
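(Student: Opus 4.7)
The plan is to prove each of the two displayed set equalities by a pair of inclusions, treating the Goldilocks case in detail and obtaining the Semi-Goldilocks statement as the same argument with the ampleness clause deleted. The two tools driving the proof are Lemma \ref{canonical.generate.lem}, which restricts the operations that can produce an $n$-variable element of $\SD(f)$ from the self-dual $(n{+}1)$-variable representative $f$, together with the fact that positivity, smallness, and ampleness are all preserved under permutations of the arguments (and positivity is preserved under variable assignment).

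For the forward inclusion, suppose $h$ is Goldilocks and lies in $\SD(f)$. Lemma \ref{canonical.generate.lem} produces indices $i \in \{0, \ldots, n\}$ and $s \in \{0, 1\}$, a permutation $\sigma \in S_n$, and a vector $\vec{u} \in \{0, 1\}^n$ with $h = \sigma \circ \gamma_{\vec{u}} \circ f_{x_i = s}$. The reduction $f_{x_i = s}$ is positive because $f$ is and because variable assignment only restricts the domain on which the function is evaluated. I would then argue that the $\vec{u}$-complementation may be taken to be the identity: on any weak variable of $f_{x_i = s}$, applying $\gamma_{\vec{u}}$ does not change the boolean function, so $\vec{u}$ may be replaced by its restriction to non-weak coordinates; and on the non-weak coordinates, any nonzero entry of $\vec{u}$ would flip $f_{x_i = s}$ from strictly increasing to strictly decreasing in that variable, contradicting the assumed positivity of $h = \sigma \circ \gamma_{\vec{u}} \circ f_{x_i = s}$. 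Hence $h = \sigma(f_{x_i = s}) \in O_{S_n}(f_{x_i = s})$; since smallness and ampleness are permutation-invariant and $h$ enjoys both, the reduction $f_{x_i = s}$ is small and ample, so this $(i,s)$ is a valid index in the union on the right-hand side.

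The reverse inclusion is essentially bookkeeping: if $h \in O_{S_n}(f_{x_i = s})$ for some reduction that is small and ample, then $h$ inherits positivity from $f$, and smallness and ampleness from $f_{x_i = s}$, making it Goldilocks; moreover, $h$ is manifestly constructed from $f$ by the operations (i)--(iii) of Lemma \ref{canonical.generate.lem}, so $h \in \SD(f)$. The Semi-Goldilocks version of both directions proceeds verbatim once every occurrence of ``small and ample'' is replaced by ``small'' and every invocation of ampleness is removed: nothing in the argument above uses ampleness except to verify the index-set condition in the Goldilocks case.

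The step I expect to require the most care is the justification, inside the forward inclusion, that the $\vec{u}$-complementation can be absorbed into the trivial action. The subtlety is that positivity of $h$ only rules out $\vec{u}$-complementations that visibly affect the boolean function, so one must separately treat the coordinates on which $f_{x_i = s}$ is weak (where $\gamma_{\vec{u}}$ acts invisibly and is therefore harmless) from the non-weak coordinates (where positivity forces $\vec{u}$ to vanish). Once this disentanglement is executed cleanly, the lemma reduces to the permutation-invariance of the Goldilocks criteria and the direct application of Lemma \ref{canonical.generate.lem}.
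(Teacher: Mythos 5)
Your proposal is correct and follows essentially the same route as the paper: invoke Lemma \ref{canonical.generate.lem}, use positivity of the reductions to force the $\vec{u}$-complementation to act trivially, and then transfer smallness (and ampleness) through the permutation; the reverse inclusion is the same bookkeeping. Your extra care in splitting the complementation into weak and non-weak coordinates is a slight refinement of the paper's one-line claim that the complementation ``must be trivial,'' but it does not change the argument.
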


This decomposition can be easily written as a union of disjoint permutation orbits with the help of a simple proposition.
\begin{proposition}
If $a_i = a_j$, then $\fxs = f_{x_j = s}$ and the corresponding permutation orbits are identical. Otherwise, the orbits are disjoint. 
\begin{proof}
In the case that $a_i = a_j$, we note that Proposition \ref{chow.red.prop} implies that $\fxs$ and $f_{x_j = s}$ must have identical Chow parameters, and thus must be identical functions. Clearly, this implies that they have the same permutation orbit.

If $a_i \neq a_j$, then (also by Proposition \ref{chow.red.prop}) $\fxs$ and $f_{x_j = s}$ will have different sets of Chow parameters $\{a_k \; | \; k \neq i \} \neq \{a_k \; | \; k \neq j\}$. Thus they cannot be permutations of one another, and must have disjoint orbits.
\end{proof}
\end{proposition}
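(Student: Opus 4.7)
The plan is to reduce both assertions to statements about Chow parameters, then exploit two ingredients: Chow's uniqueness theorem (Corollary \ref{chowunique}) together with the monotonic ordering assumed on $(a_0, \ldots, a_n)$, and the fact that the $m$-component of the Chow parameters is invariant under permutation of arguments.

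First I would record the Chow parameters of $\fxs$. For $s = 1$, Proposition \ref{chow.red.prop} gives directly that $f_{x_i = 1}$ has Chow parameters $(a_i,\, a_0, \ldots, a_{i-1}, a_{i+1}, \ldots, a_n)$. For $s = 0$, Proposition \ref{red.dualpair.prop} identifies $f_{x_i = 0}$ with $(f_{x_i = 1})^d$, so Proposition \ref{chow.dual.prop} gives Chow parameters $(2^n - a_i,\, a_0, \ldots, a_{i-1}, a_{i+1}, \ldots, a_n)$. In either case the leading (count) component depends only on $a_i$ and $s$, while the remaining entries consist of the multi-set $\{a_0, \ldots, a_n\}$ with one copy of $a_i$ deleted, arranged in their inherited order.

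For the first statement, assume $a_i = a_j$ and WLOG $i < j$. The crucial observation is that monotonicity of $(a_0, \ldots, a_n)$ forces $a_i = a_{i+1} = \cdots = a_j$. Comparing the ordered Chow parameter tuples of $\fxs$ and $f_{x_j = s}$ position by position: the leading entries agree (both are $a_i = a_j$ for $s = 1$, or $2^n - a_i = 2^n - a_j$ for $s = 0$); the entries at positions corresponding to indices $k < i$ and $k > j$ agree trivially; and entries at positions corresponding to indices $k$ with $i \leq k \leq j$ are all equal by the constancy just derived. Hence the ordered Chow tuples coincide, and Corollary \ref{chowunique} yields $\fxs = f_{x_j = s}$ as LTFs, so their $S_n$-orbits are identical.

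For the second statement, assume $a_i \neq a_j$. By Proposition \ref{chow.perm.prop}, any permutation of arguments fixes the $m$-component of the Chow parameters (the cardinality of the true set). The $m$-component of $\fxs$ equals $a_i$ (for $s = 1$) or $2^n - a_i$ (for $s = 0$), and analogously for $f_{x_j = s}$ with $a_j$; these differ under our assumption. No permutation can therefore send $\fxs$ to $f_{x_j = s}$, so the orbits are disjoint. The main obstacle in the proof is Case 1: one needs the monotonicity hypothesis to upgrade equality of the underlying multi-sets of Chow parameters to equality of the ordered tuples, which is what lets Corollary \ref{chowunique} conclude the stronger claim that the two reductions are literally equal as functions (and not merely representatives of the same $S_n$-orbit).
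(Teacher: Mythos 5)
Your proof is correct and follows essentially the same route as the paper's: both cases reduce to comparing Chow parameters of the reductions (via Propositions \ref{chow.red.prop}, \ref{red.dualpair.prop}, \ref{chow.dual.prop}) and then invoke Corollary \ref{chowunique} for equality and permutation-invariance of Chow data for disjointness. If anything your write-up is slightly more careful than the paper's: you make explicit that the monotonic ordering of $(a_0,\ldots,a_n)$ is what upgrades $a_i = a_j$ to equality of the \emph{ordered} Chow tuples (the paper leaves this implicit), and for disjointness you compare the permutation-invariant $m$-components where the paper compares the multisets $\{a_k \mid k\neq i\}$ and $\{a_k \mid k \neq j\}$; both arguments are valid.
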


\begin{corollary}\label{count.gold.sd.cor}
The number of Semi-Goldilocks functions in the SD class of $f$ is given by 
$$\SGold^{\SD}(n) = \sum |O_{S_n}(f_{x_i = s})|$$
where the summation is over all distinct small reductions $f_{x_i = s}$.

Similarly, the number of Goldilocks functions in the SD class of $f$ is given by 
$$\Gold^{\SD}(f) = \sum |O_{S_n}(f_{x_i = s})|$$
where the summation is taken over all distinct ample, small reductions.
\end{corollary}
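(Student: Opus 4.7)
The plan is to derive the corollary as a direct consequence of Lemma~\ref{sd.decomp.lem} combined with the disjointness proposition immediately preceding the statement. First I would apply Lemma~\ref{sd.decomp.lem} to write the set of Semi-Goldilocks (respectively Goldilocks) functions in the SD class of $f$ as a union of permutation orbits $\bigcup O_{S_n}(f_{x_i=s})$ indexed over all pairs $(i,s) \in \{0,\ldots,n\} \times \{0,1\}$ for which $f_{x_i=s}$ is small (respectively small and ample).

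Next, I would invoke the preceding proposition to convert this union into a disjoint one. The proposition establishes a dichotomy: whenever $a_i = a_j$ we have $f_{x_i=s} = f_{x_j=s}$ and their orbits coincide, while whenever $a_i \neq a_j$ the orbits are disjoint. Thus by selecting a single representative reduction for each equivalence class of indices with coinciding Chow parameters — that is, by taking the union over \emph{distinct} small (respectively small and ample) reductions — the terms of the union become pairwise disjoint without losing any element on the left-hand side.

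The final step is purely set-theoretic: the cardinality of a disjoint union equals the sum of cardinalities of its summands, which immediately yields
\[
\SGold^{\SD}(n) = \sum |O_{S_n}(f_{x_i=s})| \quad \text{and} \quad \Gold^{\SD}(f) = \sum |O_{S_n}(f_{x_i=s})|,
\]
with the sums taken over distinct small and distinct ample-and-small reductions, respectively. There is no substantive obstacle here: all the work has already been carried out in Lemma~\ref{sd.decomp.lem} (which supplies the covering) and in the distinctness proposition (which supplies the disjointness). The corollary simply packages these two facts and applies additivity of cardinality on a disjoint union.
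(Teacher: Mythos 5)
Your proposal is correct and matches the paper's intended reasoning: the paper states this as a corollary with no separate proof precisely because it follows immediately from Lemma~\ref{sd.decomp.lem} together with the disjointness proposition, exactly as you lay out. Your unpacking — rewrite the union over distinct reductions to make it disjoint, then apply additivity of cardinality — is the standard and intended argument.
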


We can now prove the validity of the main subroutines.
\begin{lemma}\label{alg.semivalid.lem}
If $f$ has Chow parameters $(2^n, 2\vec{a})$, then \textproc{SGoldSD}$(f, \vec{a})$ returns the number of Semi-Goldilocks functions in the SD class of $f$.
\begin{proof}
Lines \ref{alg.sg.ired}-\ref{alg.sg.endloop} iterate through all of the $i$-reduction pairs of $f$. Since the Chow parameters are in monotonic order, the condition on \ref{alg.sg.dist} ensures that only new reductions are considered. Thus, the core of the loop iterates once for each distinct reduction pair.

If the first reduction $f_{x_i = 1}$ is small, then all of its permutations are counted (\ref{alg.sg.1small}, \ref{alg.sg.1add}). The second reduction $f_{x_i = 0}$ is tested identically (\ref{alg.sg.0small}), but only if the pair are not self-dual (to prevent double counting). Thus, the loop adds to $s$ the number of semi-Goldilocks LTFs which can be reached from $f_{x_i = 0}$ or $f_{x_i = 1}$ by permutations. Thus, after line \ref{alg.sg.endloop}, $s$ holds the sum of the size of permutation orbits of all small reductions. Thus by \ref{count.gold.sd.cor}, \textproc{SGoldSD}$(f, \vec{a})$ returns the number of Semi-Goldilocks functions in the SD class of $f$.
\end{proof}
\end{lemma}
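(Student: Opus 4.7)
The plan is to prove this by reducing the loop's behavior directly to the combinatorial identity in Corollary \ref{count.gold.sd.cor}, which expresses the number of Semi-Goldilocks functions in an SD class as a sum of sizes of permutation orbits of distinct small reductions $f_{x_i = s}$. Thus the task splits into two independent verifications: (a) the outer loop enumerates exactly one representative from each pair of dual reductions $\{f_{x_i = 0}, f_{x_i = 1}\}$, selecting those which are small; and (b) the subroutine \textproc{PermCount} returns $|O_{S_n}(f_{x_i = s})|$. I will take (b) for granted since it is handled by a separate lemma on \SCounta\ (counting via Chow-parameter stabilizers and Corollary \ref{chow.perm.fixed.prop}), and concentrate on (a).

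For (a), first I would observe that by Proposition \ref{chow.red.prop} the reduction $f_{x_i = s}$ depends on $i$ only through the value $a_i$: if $a_i = a_{i-1}$, then $f_{x_i = s}$ and $f_{x_{i-1} = s}$ share the same multiset of Chow parameters, hence are the same LTF by Corollary \ref{chowunique} (noting both are reductions of the same positive self-dual $f$). Together with the fact that the input vector $\vec{a}$ is sorted, this shows that the guard on line \ref{alg.sg.dist} causes the body to execute exactly once per distinct reduction index. Next I would note that within each iteration, smallness of the $1$- and $0$-reductions is tested independently, and each is counted via \textproc{PermCount} only when small, directly matching the summation in Corollary \ref{count.gold.sd.cor}.

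The main subtlety, and where I expect the only real work to lie, is avoiding double-counting between the dual pair $f_{x_i = 0}$ and $f_{x_i = 1}$ when they coincide. By Proposition \ref{red.dualpair.prop}, $f_{x_i=1} = f_{x_i=0}^d$, so the pair coincides precisely when $f_{x_i = 0}$ is self-dual. Using Proposition \ref{chow.dual.prop} on $f_{x_i = 0}$, self-duality is equivalent to $a_i = 2^n - a_i$, i.e.\ $a_i = 2^{n-1}$. The extra guard $a_i \neq 2^{n-1}$ on line \ref{alg.sg.0small} is exactly the condition that excludes the redundant addition in this case; for $a_i \neq 2^{n-1}$ the two reductions are distinct LTFs and their permutation orbits are disjoint by the proposition immediately preceding Corollary \ref{count.gold.sd.cor}. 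Thus the value of $s$ accumulated through the loop equals the sum in Corollary \ref{count.gold.sd.cor}, which is $\SGold^{\SD}(f)$.

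Finally, I would remark that one must check that every Semi-Goldilocks $h$ in the SD class actually appears among the permutations of some reduction of the \emph{positive, self-dual} canonical generator $f$, not as a $\vec u$-complementation of one; this is already guaranteed by Lemma \ref{sd.decomp.lem}, since permutations and reductions of $f$ preserve positivity and any Semi-Goldilocks function is by definition positive, forcing the $\vec u$-complementation step of Lemma \ref{canonical.generate.lem} to be trivial. With that observation, the decomposition in Corollary \ref{count.gold.sd.cor} is exhaustive and the algorithm's output is precisely the desired count.
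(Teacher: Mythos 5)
Your proof is correct and follows essentially the same route as the paper's: reduce the loop invariant to the summation in Corollary \ref{count.gold.sd.cor}, use Proposition \ref{chow.red.prop} together with the monotonic ordering of $\vec{a}$ to justify the ``$a_i \neq a_{i-1}$'' guard as enumerating distinct reductions, and treat the ``$a_i \neq 2^{n-1}$'' guard as the anti-double-counting condition for the self-dual case via Propositions \ref{red.dualpair.prop} and \ref{chow.dual.prop}. Your version is somewhat more explicit than the paper's (spelling out that self-duality of $f_{x_i=0}$ is equivalent to $a_i = 2^{n-1}$, and invoking Lemma \ref{sd.decomp.lem} to justify exhaustiveness), but the argument is the same.
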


\begin{lemma}\label{alg.valid.lem}
If $f$ has Chow parameters $(2^n, 2\vec{a})$, then \GoldSDa returns the number of Goldilocks functions in the SD class of $f$.
\begin{proof}
Lines (3)-(13) iterate over all $i$-reduction pairs. Since the Chow parameters $\vec{a}$ are in monotonic order, the condition on (4) returns true if and only if this reduction is distinct from those seen before. Thus lines (5)-(11) execute exactly once for each distinct reduction.

Since the $i$-reductions are dual pairs (Proposition \ref{red.dualpair.prop}), only one can be ample (unless self-dual, in which case we can consider either). By Propositoin \ref{chow.dual.prop}, the condition on line (6) holds if and only if $f_{x_i = 0}$ is not ample, in which case $f_{x_i = 1}$ must be. Thus, after (8), $\val$ holds the correct value such that $f_{x_i = \val}$ is ample. Since \textproc{IsSmall} correctly tests for smallness and \textproc{PermCount} returns the size of the permutation orbit 
, line (10) adds the size of the permutation orbit if and only if $f_{x_i = \val}$ is small and ample.

Since the size of the orbit of each distinct ample, small reduction is added exactly once to $s$, Corollary \ref{count.gold.sd.cor} implies that after line (14) executes, $s$ holds exactly the number of Goldilocks functions in the SD class of $f$.
\end{proof}
\end{lemma}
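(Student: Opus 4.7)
The plan is to follow the template of Lemma \ref{alg.semivalid.lem} very closely, with the added wrinkle that we must verify the ampleness test on line~(6) is correct. By Corollary \ref{count.gold.sd.cor}, it suffices to show that \GoldSDa adds to $s$ exactly one copy of $|O_{S_n}(\fxs)|$ for each distinct reduction $\fxs$ which is simultaneously small and ample.

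First I would verify the outer loop bookkeeping. Because the Chow parameters $\vec{a}$ are in monotonic order, and because Proposition \ref{chow.red.prop} implies $f_{x_i=s} = f_{x_j=s}$ whenever $a_i = a_j$, the check $a_i \neq a_{i-1}$ on line~(4) guarantees that each distinct $i$-reduction pair $\{f_{x_i=0}, f_{x_i=1}\}$ is processed exactly once. Next I would argue that lines~(5)--(8) correctly identify the ample member of this pair. By Proposition \ref{red.dualpair.prop}, the reductions form a dual pair, and by Lemma \ref{chow.ample.lem} applied together with Proposition \ref{chow.red.prop}, the reduction $f_{x_i=1}$ is ample if and only if its leading Chow parameter, namely $a_i$, is at least $2^{n-1}$; by Proposition \ref{chow.dual.prop} the reduction $f_{x_i=0}$ is ample in the complementary case. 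The branch on line~(6) thus assigns $\val$ so that $f_{x_i=\val}$ is the (unique, up to self-duality) ample member of the pair.

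Having fixed the ample member, the subroutine then uses \textproc{isSmall} to test smallness (which is correct on the reductions of $f$ since smallness reduces to checking singletons via Theorem \ref{equivGold}) and, if the test succeeds, adds $|O_{S_n}(f_{x_i=\val})|$ via \textproc{PermCount}. Summing across the loop, $s$ accumulates the orbit sizes of precisely the distinct reductions that are both ample and small. By Corollary \ref{count.gold.sd.cor}, this is exactly $\Gold^{\SD}(f)$.

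The only mildly delicate point, and the main place the proof needs care, is the self-dual case $a_i = 2^{n-1}$: here $f_{x_i=0}$ and $f_{x_i=1}$ coincide, both satisfy the ampleness inequality, and the distinctness check on line~(4) already handles one source of duplication; I would check explicitly that the branch on line~(6) picks a single value of $\val$ in this case so that no double counting occurs. Once that is dispatched, the proof is a direct parallel to Lemma \ref{alg.semivalid.lem}.
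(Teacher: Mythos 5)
Your proposal is correct and follows essentially the same route as the paper: bookkeeping via monotonic Chow parameters for the distinctness check, duality of $i$-reductions to identify the unique ample member, and Corollary~\ref{count.gold.sd.cor} to assemble the total. Your explicit invocation of Lemma~\ref{chow.ample.lem} to justify the threshold test and your careful handling of the self-dual case $a_i = 2^{n-1}$ (where the strict inequality on line~(6) sends $\val$ to $0$, which is harmless since $f_{x_i=0} = f_{x_i=1}$) are small refinements of the paper's argument, not a different approach.
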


\begin{theorem}\label{alg.semivalid.thm}
The sum of \textproc{SGoldSD}$(f, \vec{a})$ across all canonical generators $f$ output by Winder's algorithm is the total number of Semi-Goldilocks functions on $n$ variables.
\begin{proof}
By definition, the SD classes partition the set of LTFs, so the Goldilocks functions can be split across the SD classes. Winder's algorithm \cite{Winder61} generates a single representative of each class which contains any LTFs on $n$ variables, in the form of a self-dual, positive LTF with Chow parameters in monotonic order. 

Lemma \ref{alg.semivalid.lem} implies that \textproc{SGoldSD}$(f, \vec{a})$ on such arguments returns the number of Goldilocks functions in the SD class of $f$, so taking the total of \textproc{SGoldSD} across all generators gives exactly $\SGold(n)$.

\end{proof}
\end{theorem}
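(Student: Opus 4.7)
The plan is to combine three ingredients that are already established earlier in the appendix: (i) the set of all LTFs on $n$ variables partitions into SD-type classes; (ii) Winder's enumeration procedure \cite{Winder61} produces exactly one canonical representative from each SD class that contains any LTF on $n$ variables, in the form of a positive, self-dual LTF on $n+1$ variables with Chow parameters in monotonic order; and (iii) Lemma \ref{alg.semivalid.lem} shows that on such an input, the subroutine \textproc{SGoldSD}$(f, \vec{a})$ returns exactly the number of Semi-Goldilocks functions lying in the SD class of $f$.

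First I would record the partitioning step: since every Semi-Goldilocks function is in particular an LTF on $n$ variables, every Semi-Goldilocks function lies in some SD class which contains an LTF on $n$ variables, and hence is represented (exactly once) in the output of Winder's algorithm. This means that writing $\SGold(n)$ as a disjoint union over SD classes is well defined and compatible with the list of generators produced by Winder's routine.

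Next I would invoke Lemma \ref{alg.semivalid.lem}. For each generator $f$ with Chow parameters $(2^n, 2\vec{a})$, this lemma identifies the value of \textproc{SGoldSD}$(f, \vec{a})$ with $|\{h \text{ Semi-Goldilocks} : h \in \SD(f)\}|$. Summing this identity over the generators produced by Winder's algorithm therefore gives
\[
\sum_{f} \textproc{SGoldSD}(f, \vec{a}) \;=\; \sum_{f} |\{h \text{ Semi-Goldilocks} : h \in \SD(f)\}| \;=\; \SGold(n),
\]
where the last equality uses the disjointness of SD classes across the distinct generators.

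There is no real obstacle beyond careful bookkeeping: the only subtlety is to confirm that SD classes not appearing in Winder's output contribute nothing, which holds because such classes contain no LTFs on $n$ variables and hence no Semi-Goldilocks functions. Once this is verified, the theorem is an immediate consequence of Lemma \ref{alg.semivalid.lem} and the correctness of Winder's canonical enumeration.
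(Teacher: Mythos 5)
Your proof is correct and follows essentially the same route as the paper: partition LTFs by SD class, note that Winder's algorithm produces exactly one canonical representative per relevant class, and apply Lemma \ref{alg.semivalid.lem} termwise before summing. You even handle the bookkeeping slightly more carefully than the paper (which, incidentally, refers to ``Goldilocks'' in two places where it means ``Semi-Goldilocks''), and you explicitly address why SD classes absent from Winder's output contribute nothing.
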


\begin{theorem}\label{alg.valid.thm}
The sum of \GoldSD{f, \vec{a}} across all canonical generators $f$ output by Winder's algorithm is the total number of Goldilocks functions on $n$ variables.
\begin{proof}
The argument is identical to the Semi-Goldilocks case but applies the validity of \GoldSDa (\ref{alg.valid.lem}).
\end{proof}
\end{theorem}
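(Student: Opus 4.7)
The plan is to mirror the proof of Theorem \ref{alg.semivalid.thm} verbatim, with the roles of \textproc{SGoldSD} and Lemma \ref{alg.semivalid.lem} replaced by \textproc{GoldSD} and Lemma \ref{alg.valid.lem}. The key structural fact is that SD classes partition the space of all linear threshold functions on $n$ variables (this follows directly from Definition \ref{sd.class.def}, since ``being in the same SD class'' is an equivalence relation). Consequently, the number of Goldilocks functions on $n$ variables can be computed by summing, over all SD classes that contain at least one LTF on $n$ variables, the count of Goldilocks functions within each such class.

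Next, I would invoke Winder's enumeration algorithm, which produces exactly one canonical representative per such SD class in the required normalized form: a positive, self-dual LTF on $n+1$ variables whose Chow parameters $(2^n, 2a_0, \ldots, 2a_n)$ appear in monotonic order. This is precisely the input format required by the subroutine \textproc{GoldSD}. By Lemma \ref{alg.valid.lem}, for each such canonical generator $f$, the value \GoldSDa equals the number of Goldilocks functions in the SD class of $f$.

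Combining these two observations gives the conclusion: summing \GoldSDa over the canonical generators output by Winder's algorithm produces a sum of cardinalities of disjoint sets whose union is exactly the set of all Goldilocks functions on $n$ variables, hence yields $\Gold(n)$.

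There is no real obstacle, as the heavy lifting has already been done: the combinatorial decomposition into SD classes, the correctness of Winder's generation procedure, and the per-class correctness of \textproc{GoldSD} (Lemma \ref{alg.valid.lem}, which itself rests on Corollary \ref{count.gold.sd.cor} and Proposition \ref{red.dualpair.prop}). The only subtlety worth noting explicitly is that SD classes not containing any LTF on $n$ variables contribute zero to the sum, so omitting or including them in Winder's enumeration is immaterial; the argument therefore proceeds identically to the Semi-Goldilocks case.
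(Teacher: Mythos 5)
Your proposal is correct and follows exactly the same route as the paper, which simply states that the argument is identical to the Semi-Goldilocks case (Theorem \ref{alg.semivalid.thm}) with Lemma \ref{alg.valid.lem} substituted for Lemma \ref{alg.semivalid.lem}. You have merely unrolled that reference, spelling out the SD-class partition, the canonical-generator step from Winder's algorithm, and the per-class count from Lemma \ref{alg.valid.lem}, all of which match the paper's reasoning.
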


\bibliographystyle{alpha}

\bibliography{bib}

\end{document}